\documentclass[a4paper,12pt,reqno]{amsart} 

\usepackage{fullpage}
\usepackage{times}
\usepackage{amsfonts}
\usepackage{amssymb,amsmath}
\usepackage{enumitem}
\usepackage{amsmath}
\usepackage{mathrsfs}
\usepackage{amsthm}
\usepackage{dsfont}
\usepackage{soul}
\usepackage{stmaryrd}

\usepackage{cases}
\usepackage{graphicx} 
\usepackage{calc}
\usepackage[colorlinks=true,linkcolor=black,citecolor=magenta]{hyperref}
\usepackage[dvipsnames]{xcolor}
\usepackage{xifthen} 
\usepackage{appendix}                    
\DeclareMathAlphabet{\mathpzc}{OT1}{pzc}{m}{it}

\newcommand{\fC}{\ensuremath{\mathfrak C}} 
\newcommand{\fD}{\ensuremath{\mathfrak D}}

\newcommand{\fW}{\ensuremath{\mathfrak W}}

\newcommand{\fd}{\ensuremath{\mathfrak d}}

\newcommand{\fh}{\ensuremath{\mathfrak h}}

\newcommand{\fn}{\ensuremath{\mathfrak n}}

\newcommand{\fs}{\ensuremath{\mathfrak s}} 
\newcommand{\ft}{\ensuremath{\mathfrak t}} 
\newcommand{\fu}{\ensuremath{\mathfrak u}} 
 
\newcommand{\fw}{\ensuremath{\mathfrak w}}

\newcommand{\cA}{\ensuremath{\mathcal A}} 
\newcommand{\cB}{\ensuremath{\mathcal B}} 
\newcommand{\cC}{\ensuremath{\mathcal C}} 
\newcommand{\cD}{\ensuremath{\mathcal D}} 
\newcommand{\cE}{\ensuremath{\mathcal E}} 
\newcommand{\cF}{\ensuremath{\mathcal F}} 
\newcommand{\cG}{\ensuremath{\mathcal G}}

\newcommand{\cL}{\ensuremath{\mathcal L}}

\newcommand{\cP}{\ensuremath{\mathcal P}}

\newcommand{\cT}{\ensuremath{\mathcal T}} 
\newcommand{\cU}{\ensuremath{\mathcal U}} 
 
\newcommand{\cW}{\ensuremath{\mathcal W}} 
\newcommand{\cX}{\ensuremath{\mathcal X}}

\newcommand{\bbE}{{\ensuremath{\mathbb E}}} 
 
\newcommand{\bbG}{{\ensuremath{\mathbb G}}}

\newcommand{\bbP}{{\ensuremath{\mathbb P}}}

\newcommand{\sC}{{\ensuremath{\mathscr C}}} 
\newcommand{\sH}{{\ensuremath{\mathscr H}}}
\newcommand{\sR}{{\ensuremath{\mathscr R}}} 
\newcommand{\sG}{{\ensuremath{\mathscr G}}} 
\newcommand{\sB}{{\ensuremath{\mathscr B}}}

\newcommand{\R}{\mathbb{R}}

\newcommand{\T}{\mathbb{T}}

\newcommand{\N}{\mathbb{N}}
\newcommand{\dd}{{\rm d}}

\newcommand{\ee}{{\rm e}} 
\newcommand{\eps}{\varepsilon}

\newcommand{\VT}[1]{\|#1\|_{\operatorname{TV}}}
\newcommand{\Esp}[1]{\mathbb{E}\left(#1\right)}
\newcommand{\Prob}[1]{\mathbb{P}\left(#1\right)}

\newcommand{\partt}{\partial_t }

\newcommand{\ind}{\mathds{1}}

\newcommand{\tGM}{\textsc{GM }}

\newcommand{\sGlauber}{{\scriptscriptstyle \rm G}}

\newcommand{\sMIS}{{\scriptscriptstyle \rm MIS}}

\newtheorem{theorem}{Theorem}[section]
\newtheorem{Lemma}{Lemma}[section]

\newtheorem{Proposition}{Proposition}[section]

\newtheorem{definition}{Definition}[section]
\newtheorem{remark}{Remark}[section]

\subjclass[2020]{Primary 60K35; Secondary 82C22, 60J27,37A25}
\keywords{Interacting particle systems, cutoff phenomenon, mixing time, information percolation, Glauber dynamic, misanthrope dynamic}

\title{Cutoff Phenomenon for the Glauber-Misanthrope process in a perturbative regime}
\author{Ons Rameh }
\address{Ons Rameh: Sorbonne Université and  Université Paris Cité, CNRS, LJLL, F-75005 Paris, France 
}
\email{\href{mailto:orameh@dma.ens.fr}{orameh@dma.ens.fr}}
\date{}
\begin{document}

\begin{abstract}
    We investigate the Glauber--Misanthrope process, obtained by superposing a { Misanthrope dynamics} with finite site capacity and a Glauber dynamics which creates or annihilates particles one at a time at each site. In a perturbative regime around the diffusively scaled partial exclusion process, we prove the cutoff phenomenon and identify the limiting mixing time.  

Our approach relies on the information percolation framework developed by Lubetzky and Sly \cite{LS16,LS_exposition,univ_ising_cutoff} and extended by Tran \cite{HQ}, which we adapt to accommodate multiple occupancy per site and account for the two distinct correlation mechanisms generated by the Misanthrope and Glauber dynamics. Our lower bound on the mixing time holds under assumptions more general than those required for the matching upper bound.
\end{abstract}
\maketitle
\tableofcontents

\section{Introduction}

   The Misanthrope process is a class of interacting particle systems that models the evolution of indistinguishable particles on a lattice. Particles are permitted to share the same site, jumping at a rate that depends only on the occupancy of the departure and arrival sites. It was originally introduced by Cocozza-Thivent \cite{misanthrope} to model ions subject to both an external electric field and the repulsive forces they exert on one another.  More recently, it has also been interpreted as a traffic model \cite{kanai2010twolane,tordeux2010etude}. Moreover, we emphasize that this class is quite general, as it includes the exclusion process, the partial exclusion process, and the zero-range process, and it can be further generalized to Mass Migration processes \cite{FGD_mass_migration}. The hydrodynamic and hydrostatic limits have been thoroughly studied across various settings, including the finite-dimensional torus, random environments, and systems with open boundaries (see, for instance, \cite{kipnis1999sli, G_lecture_note_IHP, Rezakhanlou1991HydrodynamicLF, Rezakhanlou1995MicroscopicSO, BGRS_euler, BGRT_constructive_euler_review, eymard_roussignom_tordeux, bahadoran2011hydrodynamics}). Furthermore, several works have focused on the mixing behavior to quantify the speed of relaxation and mixing to equilibrium (see, for example, \cite{Wilson_2004, Lacoin2013MixingTA, Lacoin_2016, Lacoin_Leblond, lacoin2016simpleexclusionprocesscircle, salez_universality_cutoff, MS19, hermon2018cutoffmeanfieldzerorangeprocess, tran2022meanfieldzerorangeprocessunbounded, Tsegaye, benjamini2002mixingtimesbiasedcard, Levin_2016,arXiv:gu_yang}).

Another celebrated process is the Glauber dynamics, where particles are created or annihilated at each site at a rate determined by the occupancy on their neighborhood. Originally introduced by Glauber \cite{Glauber}, this model is among the most extensively studied in statistical physics. Given that the literature is vast, we refer the reader to the lecture notes by Martinelli \cite{Martinelli}, as well as to \cite{HQ} and \cite[Subsection 1.1]{LS_Ising_polynomial_time} for a summary of more recent developments. The first proofs of the cutoff phenomenon\footnote{The cutoff phenomenon is to be understood in the sense of Aldous and Diaconis \cite{Diaconis_Aldous_86}. We refer the reader to \eqref{eqdef: cutoff}} for Glauber processes were established on the complete graph \cite{DLP_glauber_complete_graph,LLP_glaubet_complete} and then, on the finite--dimensional torus \cite{Lubetzky_Sly,LS_cutoff_general_spin} at high temperature.  Later, in their seminal work \cite{LS16}, Lubetzky and Sly introduced the \emph{information percolation} method, which allowed them to prove the cutoff for Glauber dynamics in the entire high--temperature regime. This framework has since proven particularly powerful for studying the mixing behavior of various interacting particle systems (see, e.g., \cite{LS_exposition, LS_Ising_polynomial_time, Nam_sly, salez_universality_cutoff, HQ}).\\

In this article, we investigate the superposition of Misanthrope and Glauber dynamics. This type of combined model was first introduced by De Masi, Ferrari, and Lebowitz \cite{de_masi_ferrari_lebowitz}, who studied its hydrodynamic limits and non-equilibrium fluctuations for Glauber Exclusion process. More recently, the zero-range Glauber process was analyzed in \cite{KFHS_mean_curvature_ZRP_Glauber,Xu_Zhao,funaki_tsunoda,funaki}. Regarding the mixing behavior, to the best of our knowledge, the first bounds on the mixing time were given for Glauber Exclusion processes at the low temperature \cite{tsunoda} and \cite{tanaka_tsunoda} at the high temperature. Most recently, the cutoff phenomenon was proven in the full high-temperature regime by Tran in \cite{HQ}, where he used the information percolation framework, attractivity, and coupling arguments. {However, extending these results beyond exclusion dynamics remains a challenging problem, as multiple occupancy destroys several structural properties underlying existing proofs.}\\

\noindent \textbf{\emph{Our contribution.}} { In this work, we study the Glauber--Misanthrope process with a finite site capacity $\kappa$, obtained by combining the Misanthrope process with Glauber updates. We work \emph{in a perturbative regime} around the partial exclusion process \eqref{eq: perturbative :( } and investigate the mixing behavior of the resulting dynamics.

Our main result, stated in Theorems \ref{thm: Lower bound general} and \ref{thm: Upper bound}, establishes the cutoff phenomenon when the refresh updates are sufficiently frequent. The upper bound further requires the diffusive scaling $\beta_N=N^2$ \eqref{cond: trivial cluster size}, whereas the lower bound is proved under a more general assumptions \eqref{cond: condition general cluster size}.

Our proof adapts the information-percolation framework of Lubetzky and Sly \cite{LS_exposition,LS16,univ_ising_cutoff} and Tran \cite{HQ} to the Glauber--Misanthrope setting. To this end, we unfold the torus into an expanded graph following \cite{Tsegaye} that encodes the multiple-occupancy structure and keep track in this geometry of the two distinct correlation mechanisms generated by the Misanthrope dynamics and the Glauber dynamics.
}

\subsection{Our Model}
Let $b: \N \times \N \rightarrow [0,\infty)$ be a bounded function on $\N \times \N$ verifying $ b(0,\cdot)=  0 $ and we denote by $\T_N$ the one-dimensional torus. A misanthrope process on $\T_N$ is the Markov process on $\N^{\T_N}$ described by the generator 
$$
\cA_{\sMIS} J(\eta)= \sum_{x,y=1}^N  b(\eta(x), \eta(y))\left[J\left(\eta^{x, y}\right)-J(\eta)\right]\,,
$$
for local functions $J: \N^{\T_N}\rightarrow \mathbb{R}$, where $\eta^{x, y}$ is the new configuration resulting from $\eta$ after a  single particle jumps from $x$ to $y$. 

In this article, we focus on a subclass that authorises only configurations with at most $\kappa$ particles, where $\kappa \in \N^\star$. Indeed, we assume
\begin{equation}
    \tag{M0} \forall m \in \llbracket 0, \kappa \rrbracket, \, \,  b(0,m)=  0 \quad \text{and} \quad b(m, \kappa)=0\,.
\end{equation}
\textbf{\emph{{ The partial exclusion process.}}} A particular example of interest is the partial exclusion process (denoted by SEP($\kappa$)), where the jumps occur at a rate $$b(n,m)= n (\kappa-m)\,.$$
For $\kappa=1$, the model coincides with the so-called simple exclusion process. As we shall see in Subsection \ref{The expanded process},  SEP($\kappa$) coincides for any $\kappa \in \N$ with an exclusion process on an expanded graph. 

{SEP($\kappa$) was first introduced by Schütz and Sandow in \cite{schutz-sandow}. It was then investigated in \cite{carinci_giardina_GR_duality_for_stochastic,CGR_exact_formula,floreani_redig_sau_orthogonal_duality} for its duality properties and in \cite{Bea_FRancheschini_G_HL,franceschini2023nonequilibriumfluctuationssepalphaopen,Chen_Sau_higher_HL,FLoreani_redig_sau_hydrodynamcis_sep_re,Schiavo_Portinale_Sau_HL_duality}, for its hydrodynamic limit and non-equilibrium fluctuations in various settings. More recently, the mixing time of the partial exclusion process was investigated on the segment in \cite{Tsegaye}.

Let us emphasize that SEP($\kappa$) inherits many nice properties from the exclusion process. For example, it is a self-dual and \emph{gradient} model. Moreover, it is reversible with respect to a product of Binomial distribution $\otimes_{x \in \T_N}\cB\left(\kappa,\frac{\alpha}{\kappa}\right)$, for any fixed $\alpha \in [0,\kappa]$.}\\

In the sequel, we consider \emph{a perturbative regime} around the  SEP($\kappa$):
\begin{equation}
    \tag{M1}
    \label{eq: perturbative :( }
    b(n,m)=\beta_N n(\kappa- m) +  n(\kappa- m)\widetilde{b}(n,m)
\end{equation}
where $\frac{1}{\beta_N}$ represents the strength of the perturbation. We also assume that
\begin{equation}
    \label{eq: attractivity of the misanthrope}
    \tag{M2}
    \begin{split}
        \forall n \in\llbracket 0, \kappa \rrbracket, \quad m \mapsto \tilde b(n,m) \text{ is non-increasing,} \\
        \forall m \in\llbracket 0, \kappa \rrbracket, \quad n \mapsto \tilde b(n,m) \text{ is non-decreasing\,.}
    \end{split}
\end{equation}
This guarantee the attractivity\footnote{{The assumption are stronger to guarentee the attractivity of the expanded process defined in Subsection \ref{The expanded process}}} of the process (see for example \cite{Gobron_saada}). We denote by $\bar b= \max \tilde b$.\\

\noindent \textbf{\emph{ The Glauber Dynamic}} We shall consider systems in contact with a reservoir on each site by adding a Glauber dynamic on top of the misanthrope process. Although there are different choices for the reservoirs, we allow the creation and annihilation of only a single particle at a time. This follows for example the construction in \cite{Bea_FRancheschini_G_HL,KFHS_mean_curvature_ZRP_Glauber}. \\

Let $c_+:\llbracket 0, \kappa \rrbracket^{\T_N} \rightarrow (0, \infty]$ be an non-decreasing rate function and $c_-:\llbracket 0, \kappa \rrbracket^{\T_N} \rightarrow (0, \infty]$ be a non-increasing rate function. We consider the Glauber  dynamic described by the following generator
\begin{align*}
    \cA_{\sGlauber}J(\eta)&= \sum_{x=1}^N  c_+(\psi_x\eta) (\kappa-\eta(x)) \left(J(\eta^{x,+})-J(\eta)\right) + \sum_{x=1}^N c_-(\psi_x\eta) \eta(x) \left(J(\eta^{x,-})-J(\eta)\right)
\end{align*}
where $\eta^{x,+}$ (resp. $\eta^{x,-}$) is the configuration obtained by adding (resp killing) a particle at the position $x$, {$\psi_x \eta= \eta_{x+\cdot}$ is the configuration translated by $x$} and $J: {\llbracket 0, \kappa \rrbracket}^{\T_N}\mapsto \R$  is a test function. 

To avoid long-range correlations, we assume throughout this article that the rate functions $c_+$ and $c_-$ are local i.e
\begin{equation}
    \tag{G1}
    \label{eq: assumption locality}
   \text{There exists } m \in \N \text{ such that, }\, \,  c_+ \text{ and }c_- \text{ depend only on } (\eta(x)) _{|x| \leq m}\,.
\end{equation}

Moreover, Lemma \cite[Lemma 8]{HQ} ensures that   we can decompose the rate function $c_+$ and $c_-$, such that there exists  $(\lambda_j)_{j=q_--1}^{q_++1} \in \R_+^{q_++q_-+1}$
\begin{equation}
    \label{eq: decomposition of c}
    c_+(\eta) = \sum_{j=1}^{q_+ +1}  \lambda_{j} {f}_{j} (\eta) \quad \text{and} \quad c_-(\eta) = \sum_{j=1}^{q_-- 1}  \lambda_{j} {f}_{j} (\eta)
\end{equation}
where $(f_{j})_{j=1}^{q_+}:\llbracket 0, \kappa \rrbracket^{\T_N} \rightarrow \{0,1\}$  non-decreasing functions,   $(f_{j})_{j=-1}^{-q_-}:\llbracket 0, \kappa \rrbracket^{\T_N} \rightarrow \{0,1\}$  non-increasing functions and where   ${f}_{q_++1}(\cdot)= {f}_{q_--1}(\cdot) =1$,  $\lambda_{q_++1}= \min c_+$ and $\lambda_{-q_--1}= \min c_-$. \\

\noindent \emph{\textbf{ Our model.}}
In this article, we are interested in the juxtaposition of the two previous processes:
\[
\cA_{\rm GM} = \cA_{\sMIS} +  \cA_{\sGlauber} .
\]
We refer to it as the Glauber-Misanthrope process (${\rm GM}$). It is an irreducible Markov process on $\llbracket 0, \kappa \rrbracket^{\T_N}$, and therefore admits a unique invariant probability measure that we will denote by $\pi_N$. We emphasise that there is no simple expression of this invariant measure and that the process is generally not reversible with respect to $\pi_N$. \\

The classical theory of Markov processes ensures that for any initial configuration $\eta_0 \in \llbracket 0, \kappa \rrbracket ^ {\T_N}$, 
$$
P_t^ {\eta_0} \xrightarrow[N \rightarrow \infty]{} \pi_N\,, 
$$
where $P_t^ {\eta_0}$ is the law of GM started at $\eta_0$. Our aim is to quantify how fast the system mixes. With this aim, for a given of precision $\theta \in (0,1)$, we define the mixing time
 as 
$$
T_{\operatorname{mix}}^{N}(\theta):=\inf \left\{t \geq 0 ; \quad d_{N}(t) \leq \theta\right\}\,,
$$
where we denoted by $d_{N}$ the worst-case total-variation distance to equilibrium, i.e
$$
d_{N}(t)=\max _{\eta \in \llbracket 0, \kappa \rrbracket ^ {\T_N} }\left\|P_t^\eta-\pi_{N}\right\|_{\mathrm{TV}}\,.
$$
It turns out that for many Markov processes, we observe a remarkable phase transition. The distance to equilibrium remains close to one for a long period before suddenly dropping to zero over a much shorter time window. In these cases, we say that the system exhibits a Cutoff phenomenon. {This phenomenon was first discovered in the context of card shuffling \cite{Diaconis1981-yy, Aldous1986SHUFFLINGCA} and is described by}
\begin{equation}
    \label{eqdef: cutoff}
    \forall \theta \in(0,1), \quad \frac{T_{\mathrm{mix}}^{N, k}(\theta)}{T_{\mathrm{mix}}^{N, k}(1-\theta)} \xrightarrow[N \rightarrow+\infty]{ } 1 .
\end{equation}
For further details on the cutoff phenomenon, we refer to \cite[Chapter 18]{LP}. {As mentioned before, under suitable conditions, we establish in this article that \tGM exhibits a cutoff phenomenon.}

\subsection{Main Results}
Let us first develop some intuition before stating the main results. Thanks to the monotonicity of $c_+$ and $c_-$ and \eqref{eq: attractivity of the misanthrope}, the process GM is attractive. That is, for any $\eta \leq \eta^\prime$, we can couple two Glauber-Misanthrope processes $(\eta_t)_ {t\geq 0}$  and $(\eta_t^\prime)_ {t\geq 0}$  such that almost surely
\begin{equation}
    \label{eq: attractivity}
    \forall t \geq 0, \qquad \eta _ t \leq \eta_t^\prime\,.
\end{equation}
Hence, the extremal initial configurations 
\begin{equation}
    \label{eq: bot top}
    \bot = (0)_{x \in \T_N}  \qquad \text{and} \qquad \top = (1)_{x \in \T_N}
\end{equation}
are the only two possibilities for the configuration furthest from equilibrium. Moreover, the quantity 
\begin{equation}
    \label{eqdef: mathpzc r intro misanthrope}
    \mathpzc r _ t = \sum_{x=1}^N \top_t(x) - \bot_t(x) 
\end{equation}
accounts for the maximal total number of discrepancies between these two extremal processes. Heuristically, if $\mathpzc r _ t = 0$ with high probability, the processes should have already mixed. As we shall see {at the beginning of Section \ref{section: Proof the theorems - Plan of the rest of the chapter}}, it is actually enough to wait until the expected number of discrepancies reaches the fluctuation regime.\\

Following this discussion, we define for any $\eta_0 \in \llbracket 0, \kappa \rrbracket ^{\T_N}$, the discrete density profile by
\[
\forall t \geq 0,\forall x \in \T_N\qquad \alpha_ t^N(\eta_0,x) := \bbE_{\eta_0} (\eta_t(x))
\]
By abuse of notation, we will drop the dependence on $x$ if the initial configuration is translation-invariant. As explained above, we define the predicted equilibrium time as 
\begin{equation}
        \label{eq: prediction of the mixing time}
    T^N_{eq}(\eps)= \inf\left\{t\geq 0, \qquad |\alpha_t^N(\top) - \alpha_t^N(\bot) | \leq {\eps}{\sqrt{N}}
      \right\}, \qquad \forall \varepsilon>0\,.
    \end{equation}

The first main result ensures that this predictive time represents a lower bound for the mixing time.  Let us first denote by
\begin{equation}
    \label{def mu+- mu -}
    \upsilon_+= \sum_{j=- q_-}^{q_+} \lambda_j, \qquad  \upsilon_-= \lambda_{-q_- -1} +\lambda_{q_+ +1}  \qquad  \text{and} \qquad  \bar \rho= \frac{\lambda_{q_++1} }{\upsilon_-}
\end{equation}
\begin{theorem}[Lower bound]
    \label{thm: Lower bound general}
    Assume that either
    \begin{equation}
        \tag{L1}\label{cond: trivial cluster size}\beta_N=N^2 \qquad \text{and} \quad 2(2 \kappa -1)\kappa \bar{b} + (\kappa (2m+1)-1) \upsilon_+ < \upsilon_-\,, \end{equation}
    or \begin{equation}
         \tag{L2}\label{cond: condition general cluster size}
        2 (4 \kappa-2)\kappa   \bar{b} +   (4\kappa m +2 \kappa -2) (2m+1)\kappa \upsilon_+ < \upsilon_-\,.
    \end{equation}
    We have 
    $$
    \forall \theta \in (0,1),\qquad \liminf_{N \rightarrow \infty} \frac{T_{\text {mix }}^{N}(\theta)}{T^N_{eq}(1)} \geq  1 \,.
    $$
\end{theorem}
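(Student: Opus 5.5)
We follow the standard Wilson-type lower bound based on a distinguishing statistic, as in \cite{LS16,HQ}. Fix $\theta\in(0,1)$ and $\delta>0$, and set $t=(1-\delta)T^N_{eq}(1)$; the aim is $d_N(t)\geq 1-\theta$ for $N$ large. We use the total particle number $S(\eta)=\sum_{x\in\T_N}\eta(x)$ as the test function. Let $\eta^\star\in\{\bot,\top\}$ be the extremal configuration for which $|\alpha^N_t(\eta^\star)-\pi_N(\eta(0))|\geq \frac12|\alpha^N_t(\top)-\alpha^N_t(\bot)|$; by attractivity \eqref{eq: attractivity} such a choice exists, since $\pi_N$ lies between $P_t^\bot$ and $P_t^\top$ stochastically. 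Translation invariance then gives $|\bbE_{\eta^\star}S(\eta_t)-\pi_N(S)|\geq \frac N2|\alpha_t^N(\top)-\alpha_t^N(\bot)|$.

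\textbf{Step 1 (the means separate at $t$).} We show $N|\alpha_t^N(\top)-\alpha_t^N(\bot)|\gg\sqrt N$. By definition of $T^N_{eq}(1)$, at time $t$ the gap exceeds $N^{-1/2}$, but we need a gap of order $A N^{-1/2}$ with $A\to\infty$. To get it, we use a quantitative comparison of the gap at times $t$ and $T_{eq}(1)$. Writing $\mathpzc r_s$ as in \eqref{eqdef: mathpzc r intro misanthrope}, we control $\partial_s\bbE\mathpzc r_s$ through the generator. Misanthrope moves conserve $S$, so only Glauber moves change it. Using the decomposition \eqref{eq: decomposition of c}, the rate at which discrepancies are killed is at most $\upsilon_-+\upsilon_+$ times their number, and the rate at which they are created is at least $\upsilon_- -$ (something small). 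This gives
\[
\bbE\mathpzc r_{s+h}\geq e^{-Ch}\,\bbE\mathpzc r_s .
\]
Since $T_{eq}(1)\to\infty$ (the gap starts at order $1$ and decays at most exponentially), we get $\delta T_{eq}(1)\to\infty$. Monotonicity then shows the gap at $t$ is at least $e^{c\,\delta T_{eq}}$ times the gap at $T_{eq}(\eps)$ for suitable $\eps$. More carefully, we would define $T_{eq}(A)$ and show $T_{eq}(A)\geq T_{eq}(1)-C\log A$, which is $\geq t$ for large $N$.

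\textbf{Step 2 (variance bounds, the main obstacle).} We need $\mathrm{Var}_{\eta^\star}(S(\eta_t))=O(N)$ and $\mathrm{Var}_{\pi_N}(S)=O(N)$; the second follows from the first by letting $t\to\infty$. Here the conditions \eqref{cond: trivial cluster size} and \eqref{cond: condition general cluster size} enter. We expand $\mathrm{Var}\,S=\sum_{x,y}\mathrm{Cov}(\eta_t(x),\eta_t(y))$ and bound the covariances via information percolation on the expanded graph of Subsection \ref{The expanded process}. We run the graphical construction backwards from $(x,t)$. Refresh updates, i.e. the $\lambda_{q_++1},\lambda_{-q_--1}$ parts of rate $\upsilon_-$, kill a branch, while non-refresh Glauber updates (rate $\le\upsilon_+$, spanning $(2m+1)\kappa$ sites) and the misanthrope perturbation (rate $\le 2(2\kappa-1)\kappa\bar b$) create branchings. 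Under (L1)/(L2) the branching process is subcritical, so clusters have exponential tails. Under (L1), the SEP part with $\beta_N=N^2$ acts as stirring on the expanded graph, so it only moves particles without branching. Under (L2), the stirring is not controlled, and we instead bound the cluster size by the larger combinatorial factor appearing in (L2). Disjoint clusters are independent, so $|\mathrm{Cov}(\eta_t(x),\eta_t(y))|\leq \kappa^2\bbP(x,y \text{ share a cluster})$. Summing over $y$ gives $O(1)$ per $x$, hence a variance of $O(N)$. The delicate point is handling stirring under (L2), where the cluster support can spread; I would try a cruder bound that counts cluster size in the space-time updates, not spatial diameter.

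\textbf{Step 3 (conclusion).} By Chebyshev, the event $\{|S-\pi_N(S)|\le \frac N4|\alpha_t^N(\top)-\alpha_t^N(\bot)|\}$ has $\pi_N$-probability $1-O(A^{-2})$ and $P_t^{\eta^\star}$-probability $O(A^{-2})$. Hence $d_N(t)\geq 1-O(A^{-2})\to1$. Since $\delta$ is arbitrary, this gives the claimed $\liminf$.
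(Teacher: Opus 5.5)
Your overall plan (total particle number as the distinguishing statistic, $O(N)$ variance via information percolation on the expanded graph, then comparing $T^N_{eq}(A)$ with $T^N_{eq}(1)$) is the paper's. However, Step 1 as written does not deliver what it needs. The inequality you derive, $\bbE\mathpzc r_{s+h}\ge e^{-Ch}\bbE\mathpzc r_s$, does give $T^N_{eq}(1)\ge C^{-1}\log(\kappa\sqrt N)\to\infty$. But applied between $t=(1-\delta)T^N_{eq}(1)$ and $T^N_{eq}(1)$, it only yields the \emph{upper} bound $\bbE\mathpzc r_t\le e^{C\delta T^N_{eq}(1)}\sqrt N$. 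It is also fully compatible with $\bbE\mathpzc r$ plateauing just above $\sqrt N$ on all of $[t,T^N_{eq}(1)]$.

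To get $T^N_{eq}(A)\ge T^N_{eq}(1)-C\log A$ you need the opposite, contraction estimate $\bbE\mathpzc r_{s+h}\le e^{-c_2h}\bbE\mathpzc r_s$ with $c_2>0$. This is the upper half of Lemma~\ref{lemma: control of REDs exp control}, and it is where the hypotheses enter Step 1. Consider the coupled dynamics of $(\wedge,\vee)$ on the expanded graph:
\begin{itemize}
\item refresh marks remove a discrepancy at $u$ at rate $\upsilon_-$;
\item a non-refresh Glauber mark of type $j$ at $u=(x,i)$ can create one only if $f_j(\psi_x\iota(\wedge))\neq f_j(\psi_x\iota(\vee))$, which by monotonicity of $f_j$ forces a discrepancy among the other $(2m+1)\kappa-1$ boxes of $B(x,m)$;
\item exchange moves conserve both $\sum_u\wedge(u)$ and $\sum_u\vee(u)$, so they do not contribute.
\end{itemize}
Counting neighbourhoods gives $\partial_s\bbE\mathpzc r_s\le-\bigl(\upsilon_--((2m+1)\kappa-1)\upsilon_+\bigr)\bbE\mathpzc r_s$, with a positive rate under (L1) or (L2). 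Your phrase about a creation rate ``at least $\upsilon_-$ minus something small'' seems to be reaching for this, but the inequality you actually state points the wrong way.

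Step 2 under (L1) is also missing its key ingredient. (L1) only makes the one-sided backward history subcritical: death at rate $\upsilon_-$ versus net growth $((2m+1)\kappa-1)\upsilon_++2(2\kappa-1)\kappa\bar b$. It does not give exponential tails for a two-sided cluster. The larger factor in (L2) comes precisely from exploring in both time directions (data found forward in time spawn both forward and backward explorations, as in the paper's CEP), not from ``uncontrolled stirring''. Likewise, the fact that stirring never branches holds for every $\beta_N$, so it cannot be why $\beta_N=N^2$ is assumed.

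In the paper, the diffusive scaling enters through anticoncentration. One runs the BEP only from the $2\kappa$ boxes above $x$ and $y$. Any two first-class data are within distance $m$ at the next branching time with probability $O(1/N)$. So the BEP coincides with a forest of independent HEP trees outside an event of probability $O(N^{-1})\,\bbE[\widetilde{\fh}^2]$. This gives $|\mathrm{Cov}(\bot_t(x),\bot_t(y))|=O(1/N)$ uniformly in $x,y,t$, hence $O(1)$ after summing over $y$. You need this, or some other argument that $\sum_y\bbP(x,y\text{ share a cluster})=O(1)$, under (L1). Under (L2), your idea of counting the cluster as an abstract two-sided tree rather than by its spatial spread is exactly what the paper does.
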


We stress that the choice of $\theta=1$ is not relevant. Indeed, we shall  prove later (see Lemma \ref{lemma: control of REDs exp control}) that for any $\theta,\theta^\prime \in \R_+^\star$,
\begin{equation}
    \label{cutoff of the predictive time}
    \frac{T_{\rm eq}^N(\theta)}{\log(N)}\, \text{ is bounded \quad and \quad}  \frac{T_{\rm eq}^N(\theta)}{T_{\rm eq}^N(\theta^\prime)} \underset{N \rightarrow 0}{\longrightarrow} 0\,. 
\end{equation}

The proof relies on the second moment method applied to $\mathpzc r$ within the \emph{information percolation} framework \cite{LS16,LS_exposition,univ_ising_cutoff}. With this aim, we control  the two-point correlation function $$\Esp{\eta(x)\eta(y)} -\Esp{\eta(x)}\Esp{\eta(y)}\,.$$ We achieve this by constructing a branching process called BEP, which traces backwards in time the dependency of $\eta(x)$ and $\eta(y)$ on the initial configuration. Heuristically, at rate $\upsilon_+$, the system makes a deterministic update by either adding or removing a particle ($f_{-q_--1}=f_{q_++1}=1$). In contrast, at rate $\upsilon_+$ for Glauber dynamics and at most $2\bar b$ for the misanthrope dynamics, the changes depend on additional information and can potentially create correlations. To avoid strong correlations, Assumptions \eqref{cond: condition general cluster size} and \eqref{cond: condition general cluster size} guarantee that these deterministic events occur frequently enough.

Lubetzky and  Sly developed the framework of information percolation based on the idea of exploring the past in order to prove the cutoff of the Glauber process \cite{univ_ising_cutoff,LS16,LS_exposition}. The key idea is that if only a small number of sites depend on the initial configuration, and hence that its information percolated up to time zero, then the system is essentially mixed. In our setting, the quantity $\mathpzc r$ precisely captures this number. {We mention that this idea of exploring backward in time is also linked to the coupling from the past algorithm \cite{Propp_wilson96}.}

{In this work, we adapt this framework to the Glauber--Misanthrope setting. Using the expanded graph representation introduced in \cite{Tsegaye}, we can handle the case $\kappa>1$ by representing the partial exclusion dynamics as an exclusion process on an enlarged graph. This allows us to extend the information percolation approach beyond the classical binary state space $\{0,1\}^{\T_N}$ to a setting with local state space $\llbracket 0,\kappa\rrbracket^{\T_N}$. A distinctive feature of this setting is that correlations may arise from two different sources: the Glauber updates and the ``pure'' Misanthrope dynamics. }\\

Regarding the upper-bound, we will focus on the regime $\beta_N = N^2$. 
\begin{theorem}[Upper bound]
    \label{thm: Upper bound}
    Assume \eqref{cond: trivial cluster size}. Then, the model exhibits a Cutoff phenomenon, and we have  for any $\theta \in (0,1)$, 
   $$
       \lim _{\substack{N \rightarrow \infty}} \,  \frac{T_{\text {mix }}^{N}(\theta)}{T^N_{eq}(1)} = 1 \,.
    $$
\end{theorem}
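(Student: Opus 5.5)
Since Theorem \ref{thm: Lower bound general} already gives $\liminf_N T_{\rm mix}^N(\theta)/T^N_{eq}(1)\geq 1$ under \eqref{cond: trivial cluster size}, the plan is to prove the matching upper bound. We show that for every $\delta>0$ and $\theta\in(0,1)$, $d_N\big((1+\delta)T^N_{eq}(1)\big)\leq \theta$ for $N$ large. By \eqref{cutoff of the predictive time}, $T^N_{eq}(\eps)$ is of order $\log N$ and its dependence on $\eps$ is negligible at first order. It therefore suffices to show that at time $t_\eps:=T^N_{eq}(\eps)+s_N$, with $s_N=o(\log N)$ a short burn-in, the distance $d_N(t_\eps)$ tends to $0$ as $\eps\to0$, uniformly in $N$ large.

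\textbf{Reduction to discrepancies.} By attractivity \eqref{eq: attractivity} and the grand coupling, every $\eta_t$ is sandwiched between $\bot_t$ and $\top_t$. Hence $\VT{P_t^\eta-\pi_N}\leq \max_{\xi\in\{\bot,\top\}}\VT{P_t^\xi-\pi_N}$, and this is in turn bounded by $\VT{P^\top_t-P^\bot_t}$, since $\pi_N$ is a mixture of the laws $P_t^\eta$. We do not use $\bbP(\mathpzc r_t>0)$ directly: at $t_\eps$ we only have $\bbE\mathpzc r_t\leq \eps\sqrt N$, not $o(1)$. Instead we follow the Lubetzky--Sly / Tran scheme. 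Run the coupled dynamics up to $t_\eps-s$, with $s$ a large constant, and then use the information-percolation clusters of the last time window $[t_\eps-s,t_\eps]$ on the expanded graph of Subsection \ref{The expanded process}.

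\textbf{Information percolation on the expanded graph.} Build the backward exploration (the BEP used for the lower bound) from each site $(x,i)$ of the expanded torus at time $t_\eps$. Deterministic refresh updates, at total rate $\upsilon_-$, kill a branch. Glauber updates with non-constant $f_j$, at rate $\upsilon_+$, branch to the $2m+1$ neighbouring sites, i.e. $\kappa(2m+1)-1$ new expanded sites. Misanthrope perturbations $\tilde b$, at rate at most $2\bar b$, branch to the $2\kappa-1$ sites paired with the jump, over $\kappa$ copies. The SEP($\kappa$) part at rate $N^2$ is a measure-preserving stirring on the expanded graph, so it only moves branches and creates no correlation. Condition \eqref{cond: trivial cluster size} is exactly subcriticality: death rate exceeding branching rate. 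So clusters have exponential tails in size and lifetime. Because of the fast stirring ($\beta_N=N^2$), a branch surviving time $s$ spreads over a macroscopic region and typically meets no other branch. This is why in the diffusive regime clusters are "trivial", i.e. single lineages or pairs, with overwhelming probability. Following \cite{LS_exposition,HQ}, we classify clusters as \emph{blue} (they die before time $t_\eps-s$), \emph{red} (they survive and are large or interacting) and \emph{green} (singletons surviving to time $t_\eps-s$). Red clusters have expected total number $o(1)$ once $s$ is large, using the exponential tail $e^{-(\upsilon_--\text{branching rate})s}$ on the expanded graph. On the event of no red clusters, the configuration at $t_\eps$ is a product of blue parts, which are independent of the initial condition, and green parts. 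Each green part is a function of a single site value at time $t_\eps-s$.

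\textbf{Comparing green projections.} As in \cite[Lemma ...]{LS_exposition}, we then bound $\VT{P^\top_{t_\eps}-P^\bot_{t_\eps}}$ by $\bbP(\text{red})$ plus the $L^2$ distance between the laws of the green values started from $\top$ and from $\bot$. This $L^2$ distance is controlled by $\bbE\big[\exp(c\,|\text{green}\cap\text{green}'|\,\cdot \max_x|\alpha^N_{t_\eps-s}(\top)-\alpha^N_{t_\eps-s}(\bot)|^2)\big]-1$, where the second copy gives an independent set of green lineages. The expected overlap is $O(N e^{-cs})$ times a per-site discrepancy of order $\eps N^{-1/2}e^{cs}$. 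Since the decay of $\alpha$ is governed by the same rate, this yields an exponent $O(\eps^2)$, so the term vanishes as $\eps\to0$.

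\textbf{Main obstacle.} The hardest step is the red-cluster and decorrelation estimate in the presence of multiple occupancy. The exclusion structure on the expanded graph must be preserved by the coupling, which is where the strengthened monotonicity \eqref{eq: attractivity of the misanthrope} is needed. We must also show that fast stirring keeps distinct lineages far apart, so that the only correlations are those in the branching bound. I would first try to get this by a duality with SEP($\kappa$) random walks together with a union bound over pairs of lineages.
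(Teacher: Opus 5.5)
\textbf{Gap 1: your red clusters are not rare.} You claim the red clusters of the window $[t_\eps-s,t_\eps]$ have expected number $o(1)$ once $s$ is a large constant (or $s_N=o(\log N)$). This is false. Take any of the $\kappa N$ expanded sites and consider the following backward history:
\begin{itemize}
\item no clock rings for backward time $s-1$;
\item then a non-constant Glauber or correlated-exchange mark occurs;
\item then two of the children see no clock until $t_\eps-s$.
\end{itemize}
Since all clock rates are bounded, this has probability at least $c\,e^{-Cs}$. Such a cluster depends on at least two site values at time $t_\eps-s$, so it is red in your sense. The expected number of red clusters is therefore of order $Ne^{-Cs}$, not $o(1)$. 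Making it $o(1)$ forces $s\gtrsim \log N$. Then $s$ is comparable to $T^N_{eq}(1)$, and the density discrepancy at $t_\eps-s$ is no longer small. So you cannot discard branched surviving clusters; they must go through the same comparison as your green ones.

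\textbf{Gap 2: the green comparison assumes a product measure.} Your bound $\bbE[\exp(c|\mathrm{green}\cap\mathrm{green}'|\,\delta^2)]-1$, with $\delta$ the density discrepancy, is the chi-square formula for two \emph{product} Bernoulli measures. The configuration at time $t_\eps-s$, started from $\top$ or $\bot$, is a correlated field. Glauber and $\widetilde b$ updates create correlations, $\pi_N$ is not a product measure, and the two starts differ in their correlations as well as their densities. Hence the law on the random bottom set is not determined by $\alpha^N$. Controlling it means redoing information percolation on $[0,t_\eps-s]$. There, the common but correlated green clusters are exactly what blocks a direct Miller--Peres argument; this is the step Lubetzky--Sly needed a delicate conditional estimate for.

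\textbf{How the paper avoids this.} It works at the single time $\ft=T^N_{eq}(1)+\eps\log N$.
\begin{itemize}
\item Lemma~\ref{lemma: RGB conditionnally on the sigma algebra} gives, for \emph{every} initial condition, Dirac values on $\sR(\ft)=\{\wedge_\ft\neq\vee_\ft\}$, a common green configuration, and i.i.d.\ $\cB(\bar\rho)$ blues. So no reduction to $\top$ versus $\bot$ is needed; yours does not follow from attractivity anyway.
\item During one further unit of forward time, each red discrepancy is followed by a tracer. The hesitating exclusion marks between blue sites randomise the tracer's position given $\widehat\Xi$.
\item Miller--Peres is then applied one discrepancy at a time. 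The squared TV cost is bounded by the collision local time of two independent tracers, which is $\lesssim D_N^2/\sqrt{\beta_N}$ with $D_N=c_3\log N$ (Lemmas~\ref{lemma: one red control}, \ref{lemma: RHS- one red control}, \ref{lemma: there is a minimum of well positionned Zs}).
\item Summing the per-discrepancy cost $\lesssim D_N/\sqrt N$ over $\bbE\mathpzc r_\ft\le \sqrt N N^{-c_2\eps}$ discrepancies gives $O(\log N\, N^{-c_2\eps})\to 0$.
\end{itemize}
This is where $\beta_N=N^2$ genuinely enters. The stirring does not merely keep lineages apart, as in your sketch; it dilutes each discrepancy into the i.i.d.\ blue background.
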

The idea of the proof is based on the techniques developed in \cite{HQ}{, adapted to the geometry of the expanded graph.} Without going into technical details, as this would require additional notation, the key intuition is that when $\mathpzc r$ is sufficiently small, one can hope to eliminate discrepancies one by one. For this matter, we take advantage of the partial exclusion dynamics to drive discrepancies toward suitably ``good'' regions\footnote{See Subsection \ref{subsection: RGB}; these ``good'' regions correspond to the blue positions.}. {The diffusive scaling allows these discrepensies to reach these ``good''  regions efficiently.} 
{We note that the lower bound in Theorem \ref{thm: Lower bound general} holds in a more general regime. This suggests that the cutoff phenomenon may persist beyond the diffusive scaling considered in Theorem \ref{thm: Upper bound}.}

Finally, in the {diffusive scaling} $\beta_N=N^2$, we determine the limit of $T^N_{eq}(1)$. Since we expect the contribution of the pure misanthrope part to become negligible compared to the partial exclusion contribution, we expect that at the macroscopic level, the density profile evolves according to
 \begin{equation}
     \label{Eq:reactdiff}
     \partt \alpha= R(\alpha)
 \end{equation}
 where 
 \begin{equation}
    \label{eq: reaction term}
    R (\alpha) = \mathbb{E}_{\cB\left(\kappa,\frac{\alpha}{\kappa}\right)}\left[ (\kappa-\eta(0)) c_+(\eta) - \eta(0) c_-(\eta)\right].
 \end{equation}
{Indeed, Equation \eqref{Eq:reactdiff} should describe the scaling limit of the partial exclusion process with a Glauber dynamics as we consider the diffusive scaling.}
\begin{Proposition}
    \label{prop: expression of predictive time}
    Assume \eqref{cond: trivial cluster size}. $R$ admits a unique zero $\alpha_\star$ to which both $\alpha^N(\wedge)$ and $\alpha^N(\vee)$ converge.
   Then, we have
   $$
       \lim _{\substack{N \rightarrow \infty}} \,  \frac{{T^N_{eq}(1)}}{\log(N)} = \frac{1}{2 R^\prime(\alpha_ \star)}\,.
    $$
\end{Proposition}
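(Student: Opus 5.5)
The proof has three steps. First we analyse $R$ and show it has a unique, attracting zero. Then we show that both discrete profiles $\alpha^N_t(\top)$ and $\alpha^N_t(\bot)$ follow the ODE \eqref{Eq:reactdiff} up to errors that vanish as $N\to\infty$. Finally we show that the gap $D_t:=\alpha^N_t(\top)-\alpha^N_t(\bot)$ decays at the linearised rate $|R'(\alpha_\star)|$. The sign convention in the statement is to be read as $R'(\alpha_\star)<0$, with the limit equal to $1/(2|R'(\alpha_\star)|)$.

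\textbf{Step 1: analysis of $R$.} Using the decomposition \eqref{eq: decomposition of c}, we write
$$R(\alpha)=\lambda_{q_++1}(\kappa-\alpha)-\lambda_{-q_--1}\alpha+\sum_{j\neq \pm(q_\pm+1)}\lambda_j\,\mathbb{E}_{\cB(\kappa,\alpha/\kappa)}\bigl[\cdots f_j\bigr].$$
The first two terms contribute $-\upsilon_-$ to $R'$. Each remaining term is a polynomial in $\alpha/\kappa$ depending on at most $2m+1$ sites. Differentiating the binomial product measure in $\alpha$, which is a sum of single-site derivatives, bounds its contribution to $|R'|$ by roughly $(\kappa(2m+1)-1)\upsilon_+$. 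Under \eqref{cond: trivial cluster size} this gives $R'\le -c<0$ on $[0,\kappa]$. Since $R(0)>0$ and $R(\kappa)<0$, $R$ has a unique zero $\alpha_\star$, and it is exponentially attracting for \eqref{Eq:reactdiff}.

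\textbf{Step 2: hydrodynamic comparison.} Both $\top$ and $\bot$ are translation invariant, so
$$\partial_t\alpha^N_t=\bbE\bigl[(\kappa-\eta_t(0))c_+(\psi_0\eta_t)-\eta_t(0)c_-(\psi_0\eta_t)\bigr];$$
the Misanthrope part conserves mass and drops out. This differs from $R(\alpha^N_t)$ only through deviations of the $(2m+1)$-site marginal of $\eta_t$ from $\cB(\kappa,\alpha_t^N/\kappa)^{\otimes(2m+1)}$.

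For the comparison we use the backward exploration (BEP) already needed for Theorem \ref{thm: Lower bound general}. With $\beta_N=N^2$, the SEP($\kappa$) part is a fast stirring on the expanded graph. Backward histories of distinct sites in a window of size $2m+1$ therefore separate before they can interact through Glauber or $\tilde b$ updates, up to probability $o(1)$. Hence the local law is product binomial up to $o(1)$, uniformly in $t$, and $\partial_t\alpha^N_t=R(\alpha^N_t)+o(1)$. Gronwall on a time window of order one, followed by Step 1, shows that after a time $t_0$ independent of $N$ both profiles are within any fixed $\delta$ of $\alpha_\star$, and they remain there.

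\textbf{Step 3: decay of the gap (the main obstacle).} An additive $o(1)$ error is useless once $D_t$ is of order $N^{-1/2}$, so we need an estimate of the form
$$\partial_t D_t=\bigl(R'(\alpha_\star)+O(\delta)+o(1)\bigr)D_t.$$
We work under the monotone coupling \eqref{eq: attractivity} with $\xi_t=\top_t-\bot_t\ge 0$. Then $\partial_t D_t$ equals $-\upsilon_- D_t$ plus terms of the form $\lambda_j\,\bbE\bigl[(f_j(\top_t)-f_j(\bot_t))\cdots\bigr]$. Each such term is supported on configurations with a discrepancy at one of the $2m+1$ neighbouring sites.

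The key claim is that, conditionally on a discrepancy sitting at a given site, the environment in the window is still close to product $\cB(\kappa,\alpha_\star/\kappa)$ with no further discrepancy, up to relative error $O(\delta)+o(1)$. Given this claim, the sum of these terms equals $(R'(\alpha_\star)+\upsilon_-)D_t$ up to that relative error.

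I would prove the claim by tracing the discrepancy backward with the BEP. Under \eqref{cond: trivial cluster size} the BEP cluster of a site is subcritical, so the probability of two discrepancies within distance $m$ is bounded by $O(D_t/N)$-type two-point estimates, the same ones used for the second moment in Theorem \ref{thm: Lower bound general}. Diffusive stirring then decorrelates the discrepancy from its environment.

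Integrating the estimate from $t_0$ and letting $\delta\to0$ gives $D_t=e^{-(|R'(\alpha_\star)|+o(1))(t-t_0)}\Theta(1)$. Solving $D_t=N^{-1/2}$ yields
$$T^N_{eq}(1)=\frac{\log N}{2|R'(\alpha_\star)|}(1+o(1)),$$
and by \eqref{cutoff of the predictive time} the threshold constant $\varepsilon=1$ does not affect this limit.
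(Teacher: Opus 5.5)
The gap is Step 3, which you identify as the crux but only sketch. Your key claim is that, conditionally on a discrepancy at a site, the window is close to product $\cB(\kappa,\alpha_\star/\kappa)$ with relative error $O(\delta)+o(1)$ all the way down to $D_t\approx N^{-1/2}$. That is exactly the decorrelation statement that needs proof, and the justification you offer does not provide it:
\begin{itemize}
\item The two-point estimates behind Theorem \ref{thm: Lower bound general} are unconditional bounds (coupling failure or cluster size, of order $1/N$ per pair). They say nothing about the environment seen from a discrepancy.
\item The bound ``$O(D_t/N)$'' for two discrepancies in a window is not what holds. Up to the $O(1/N)$ failure probability, discrepancies at distinct boxes are independent, so this probability is of order $D_t^2+1/N$. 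That is $o(D_t)$, but not $O(D_t/N)$.
\item ``Diffusive stirring decorrelates the discrepancy from its environment'' restates the claim rather than proving it.
\end{itemize}
Steps 1 and 2 are in line with the paper. Step 1 matches the analysis of $R$ in Lemma \ref{lemma: technical lemma on R}, and you are right that the limit should read $1/(2|R'(\alpha_\star)|)$.

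What you are missing is that Step 3 is unnecessary. Its motivation, that Step 2 only gives an unquantified $o(1)$, is too pessimistic. For $\beta_N=N^2$ the comparison error is $O(1/N)$ uniformly in $t$, and $1/N\ll N^{-1/2}$, so an additive error is enough. The paper's argument runs as follows.
\begin{itemize}
\item It couples the BEP with the HEP, in which data carry no positions and sibling subtrees are independent. The one-box density there is \emph{exactly} $\alpha_o(t)/\kappa$, with $\alpha_o$ the ODE solution (Lemma \ref{lemma: Density profile for HEP}).
\item The coupling breaks only when a datum branches while another first-class datum is within distance $m$. By Lemma \ref{antioncentration lemma}, applied at an independent exponential time, this has probability $O(1/N)$ per pair.
\item Summing over pairs costs $\frac cN\bbE[\widetilde{\fh}^2]$. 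This is bounded uniformly in $t$ by the subcritical birth--death exponential moment of Lemma \ref{lemma: trivial cluster size}.
\item Hence $|\alpha^N_t(\bot)-\alpha_0(t)|$ and $|\alpha^N_t(\top)-\alpha_1(t)|$ are at most $c/N$ for all $t$. So $D_t=(\alpha_1(t)-\alpha_\star)+(\alpha_\star-\alpha_0(t))+O(1/N)$, with both brackets positive and $\asymp e^{R'(\alpha_\star)t}$ by the two-sided Gronwall bound.
\item Solving $D_t=N^{-1/2}$ then gives the limit.
\end{itemize}
In your own framework, an $O(1/N)$ error in Step 2 together with $R'\le -c$ already gives $|\alpha^N_t-\alpha_o(t)|\le C/N$ for all $t$. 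Making your key claim rigorous would need this same input anyway. The failure event contributes a relative error $O(1/(ND_t))$, which is small only because $D_t\ge N^{-1/2}\gg 1/N$ in the relevant range.
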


\subsection{Organisation of the paper}
The rest of this paper is organised as follows. In Section \ref{section: Preliminaries - Graphical constructions}, we introduce the main processes and the graphical constructions that will be used. In particular, the GM process on an expanded graph is defined in Subsection \ref{The expanded process} and the BEP is constructed in Subsection \ref{subsection: BEP construction}.

The proofs of Theorems \ref{thm: Lower bound general} and \ref{thm: Upper bound} are the aim of Section \ref{section: Proof the theorems - Plan of the rest of the chapter}. Section \ref{section: Density profile and correlation control} is devoted to the analysis of the density profile and the control of the two-point correlation function. In particular, Proposition \ref{prop: expression of predictive time} is prove in Subsection \ref{subsection: Density profile and correlation control}. Finally, in Section \ref{section: Proof of Propostion the control of one error and lcocalisation}, we establish the intermediate lemmas required for the proof of the upper bound.

\section{Preliminaries - Graphical constructions}

\label{section: Preliminaries - Graphical constructions}
\subsection{The expanded process}   
\label{The expanded process}
As specified above, the proof is based on tracing backwards in time the dependency $\eta_x$ at time $t$ with respect to the initial configuration. Given that the maximum number of particles per site is fixed at \(\kappa\), it is helpful to visualise the content of position \(x\) at time \(t\) as being contained in \(\kappa\) distinct boxes located at that position. It turns out that the dynamics of the partial exclusion primarily involve the exchange of these boxes, while \(\widetilde b\) necessitates revealing their contents. With this understanding, it is more convenient to examine our process as living on an expanded graph, as in \cite{Tsegaye}. \\

We denote by $\mathbb{G}_{N,\kappa}=(V_{N,\kappa},E_{N,\kappa})$ the graph where $$V_{N,\kappa}=\T_N \times [\kappa]\quad \text{and} \quad E_{N, \kappa}= \left\{ \{(x,i), (y,j)\}, \forall i, j \in [\kappa], \, \forall x,y \in \T_N, \, \operatorname{dist}_{\T_N}(x,y)=1 \right\} \,,$$ 
where $\operatorname{dist}_{\T_N}$ is the graph distance in $\T_{N}$ and $[\kappa]  := \llbracket 1, \kappa\rrbracket $. 

Moreover, we define the distance (different from the graph distance) between  $(x,i),(y,j) \in V_{N,\kappa}$ as 
\begin{equation}
    \label{def: distance on the expanded graph}
    \operatorname{dist}((x,i),(y,j))= \operatorname{dist}_{\T_N}(x,y)\,.
\end{equation}

 For any $u \in V_{N,\kappa}$, we denote by $B(u,m)$ the ball of radius $m \in \N$ centred in $u$.  We also abusively denote $B(e,m)= B(u,m)\cup B(v,m)$ for $e=\{u,v\}$. 

We associate to  any $\xi \in \{0,1\}^{\mathbb{G}_{N,\kappa}}$, the configuration  $\iota(\xi)\in \llbracket 0, \kappa \rrbracket^{\T_N}$, where
 \begin{equation}
    \label{eq: def iota projection}
    \forall x \in \T_N , \quad\iota(\xi)(x)= \sum_{j=1}^\kappa \xi(x,j)\,.
\end{equation}
{We also denote abusively $\iota((x,i))=x$. }

We define the expanded process on $\mathbb{G}_{N,\kappa}$ as a continuous-time Markov process on $\{0,1\}^{\bbG_{N,\kappa}}$ with the following generator: \begin{equation}
     \label{def-eq: generator of the exapnded process}
    \cA_{\rm Exp} = {\cA_ E}+{\cA}_ {C,L}+{\cA}_ {C,R} +{ \cA_{Gl}}+{\cA}_ R.
\end{equation}
where:
\begin{align}
     {\cA}_E J(\xi)& = \beta_N \sum_{(x,i) \sim (y,j)} \left(J(\xi^{(x,i) \leftrightarrow (y,j)})-J(\xi)\right) \tag{Exclusion exchanges}\\
     {\cA}_ {C,L} J(\xi)& =   \sum_{x,i,j} \widetilde b(\iota(\xi)_x,\iota(\xi)_{x+1}) \xi_{(x,i)} \left(J(\xi^{(x,i) \leftrightarrow (x+1,j)})-J(\xi)\right) \tag{Correlated  left exchanges} \\
     {\cA}_ {C,R} J(\xi)& =  \sum_{x,i,j} \widetilde b(\iota(\xi)_{x+1},\iota(\xi)_x, ) \xi_{(x+1,j)} \left(J(\xi^{(x,i) \leftrightarrow (x+1,j)})-J(\xi)\right) \tag{Correlated  right exchanges}  \\\label{eq:refreshupdate_genrator_expanded}
    {\cA}_ R J(\xi)& = \upsilon_- \sum_{(x,i)} \bar \rho \left(J(\xi^{(x,i) ,1})-J(\xi)\right) + (1- \bar \rho) \left(J(\xi^{(x,i) ,0})-J(\xi)\right) \tag{Refresh updates}\\
    {\cA}_ {Gl }J(\xi)& = \sum_{(x,i)} [c_-(\psi_x\iota(\xi))-\lambda_{-q_--1}] \left(J(\xi^{(x,i) ,0})-J(\xi)\right) \tag{Anhilation Glauber updates}\\ &\, \,+ \sum_{(x,i)} [c_+(\psi_x\iota(\xi))-\lambda_{q_++1}] \left(J(\xi^{(x,i) ,1})-J(\xi)\right) \,. \tag{Creation Glauber updates}
 \end{align} 
where $\xi^{u \leftrightarrow v}$ is the configuration obtained by exchanging the value of $\xi$ on these positions, $\xi^{u,1}$ (resp. $\xi^{u,0}$) is the configuration obtained by adding  (resp killing) a particle at the position $(x, i)$. 
We also denote by $\nu_{\rm eq}$ its invariant measure.
\begin{Lemma}
    \label{lemma: link expanded and misanthrope process}
  For any $\xi_0  \in \{0,1\}^{\bbG_{N, \kappa}}$, $(\iota(\xi_t))_t$ is a Misanthrope-Glauber process started at $\iota({\xi_0})$. Moreover, $\pi_N = \nu_{\rm eq}\circ \iota ^{-1}$.
\end{Lemma}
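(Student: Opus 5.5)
We prove the first claim with Dynkin's lumpability criterion: the image of a Markov chain under $\iota$ is again Markov with generator $\cA_{\rm GM}$ as soon as
\[
\cA_{\rm Exp}(J\circ\iota)(\xi)=(\cA_{\rm GM}J)(\iota(\xi)) \qquad \text{for every } J:\llbracket 0,\kappa\rrbracket^{\T_N}\to\R \text{ and every } \xi .
\]
The state space is finite, so this identity gives the result directly: the martingale problem for $\cA_{\rm GM}$ is solved by $\iota(\xi_t)$, and uniqueness identifies its law. We therefore check the identity term by term. Throughout, write $\eta=\iota(\xi)$.

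For the jumps from $x$ to $x+1$, the exchange $\xi^{(x,i)\leftrightarrow(x+1,j)}$ changes $\eta$ only when $\xi(x,i)=1$ and $\xi(x+1,j)=0$, and then it gives $\eta^{x,x+1}$. The number of such pairs $(i,j)$ is $\eta(x)(\kappa-\eta(x+1))$. In ${\cA}_E$ these pairs contribute $\beta_N\eta(x)(\kappa-\eta(x+1))$. In ${\cA}_{C,L}$ they contribute $\widetilde b(\eta(x),\eta(x+1))\,\eta(x)(\kappa-\eta(x+1))$, because the factor $\xi_{(x,i)}$ forces $\xi(x,i)=1$ and any swap with $\xi(x,i)=\xi(x+1,j)$ leaves $J\circ\iota$ unchanged. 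The total rate is $b(\eta(x),\eta(x+1))$ by \eqref{eq: perturbative :( }. The jumps from $x+1$ to $x$ are handled symmetrically: ${\cA}_{C,R}$ supplies $\widetilde b(\eta(x+1),\eta(x))\,\eta(x+1)(\kappa-\eta(x))$ and ${\cA}_E$ supplies the $\beta_N$ part. Here I must check the indexing convention and confirm that the sum in $\cA_{\sMIS}$ runs over nearest neighbours $y=x\pm1$, since the expanded graph only has nearest-neighbour edges. I read the torus model that way.

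For creations at $x$, we collect the updates $\xi^{(x,i),1}$. Only boxes $i$ with $\xi(x,i)=0$ matter, and there are $\kappa-\eta(x)$ of them. Each such box is created at rate $\upsilon_-\bar\rho=\lambda_{q_++1}$ from ${\cA}_R$, plus $c_+(\psi_x\eta)-\lambda_{q_++1}$ from ${\cA}_{Gl}$. The total is $(\kappa-\eta(x))\,c_+(\psi_x\eta)$, as required, and the case $\lambda_{q_++1}=\min c_+$ ensures the ${\cA}_{Gl}$ rate is nonnegative. For annihilations, the $\eta(x)$ occupied boxes each die at rate $\upsilon_-(1-\bar\rho)=\lambda_{-q_--1}$ plus $c_-(\psi_x\eta)-\lambda_{-q_--1}$, which gives $\eta(x)\,c_-(\psi_x\eta)$. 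Updates that do not change $\xi$ contribute zero. This establishes the intertwining identity.

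For the second claim, let $\xi_t$ start from $\xi_0\sim\nu_{\rm eq}$, so that $\xi_t\sim\nu_{\rm eq}$ for all $t$. By the first part, $\iota(\xi_t)$ is a GM process with law $\nu_{\rm eq}\circ\iota^{-1}$ at every time, so $\nu_{\rm eq}\circ\iota^{-1}$ is invariant for GM. GM is irreducible on a finite space, so its invariant measure is unique and $\pi_N=\nu_{\rm eq}\circ\iota^{-1}$. This argument uses only the existence of some invariant measure $\nu_{\rm eq}$ for the finite chain $\cA_{\rm Exp}$; if it is not unique, the identity holds for each one.

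The main obstacle is the bookkeeping in the exchange terms: matching the ordered pairs $(i,j)$ in ${\cA}_{C,L}$ and ${\cA}_{C,R}$ with the correct arguments of $\widetilde b$, and checking that the exclusion part gives exactly $\beta_N n(\kappa-m)$ in each direction without double counting an unordered edge.
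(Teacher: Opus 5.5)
Your proposal is correct and follows essentially the paper's approach: the paper proves the lemma only by saying it is an immediate consequence of the expressions of $\cA_{\rm Exp}$ and $\cA_{\rm GM}$, and your term-by-term check of $\cA_{\rm Exp}(J\circ\iota)=(\cA_{\rm GM}J)\circ\iota$, followed by the uniqueness argument for $\pi_N$, is exactly that computation made explicit. Your two readings are the ones the identity requires: the sum in $\cA_{\sMIS}$ must run over nearest neighbours, and the sum in $\cA_E$ over unordered edges of $E_{N,\kappa}$ (as in the graphical construction).
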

The proof is an immediate consequence of the expression of the generators $\cA_{\rm Exp}$ and $\cA_{\rm GM}$. As mentioned earlier, we expect the two extremal configurations to correspond to the worst-case scenarios. We denote $(\vee_t)_{t \ge 0}$ (resp. $(\wedge_t)_{t \ge 0}$) the process initialized  at the empty configuration $(0)_{u \in V_{N,\kappa}}$  (resp. the full configuration $(1)_{u \in V_{N,\kappa}}$). 
\subsection{Graphical construction of the process and its history}
\label{subsection: Graphical construction of the process and its history}
The key idea is to construct the process at time $t$ deterministically from the initial condition and a common random background process that contains the update sequence, such as the nature, time, and location of the update. 

We start by introducing the common background process. Let
$$
\Xi=\left(\left(\Xi_e^{E}\right)_{ e \in E_{N,\kappa} }, 
\left(\Xi_{e}^{C}\right)_{  e \in E_{N,\kappa} }, 
\left(\Xi_{j,u}^{G}\right)_{\substack{-q_- \leq j \leq q_+,\\ u \in V_{N,\kappa}}}, \left(\Xi_{\pm,u}^{R}\right)_{u \in V_{N,\kappa}} \right)
$$
be defined as independent Poisson processes as follows:
\begin{itemize}
    \item \textbf{Exclusion Marks:} For any $e\in E_{N,\kappa}$, $\Xi_e^{E}$ is a PPP on $\R_+$ of intensity $\beta_N$.
    \item \textbf{Refresh Marks:} For any $ u \in V_{N,\kappa}$, $\Xi_{u}^{R}$ is a  PPP on $\R_+\times \{0,1\}$ of intensity $$ \upsilon_- \dd t \otimes \cB(\bar{\rho})\,, $$ where $\bar \rho$ and $\upsilon_-$ are defined in \eqref{def mu+- mu -}.
    \item \textbf{Correlated exchange Marks:} For any $e \in E_{N,\kappa}$, $\Xi_{e}^{C}$ is a  PPP on $\R_+\times [0,1]$ of intensity $$  \bar{b} \dd t \otimes \ind_{[0,1]} dV.$$
    \item  \textbf{Glauber Marks:} For any $-q_- \leq j \leq q_+, u \in V_{N,\kappa}$,  $\Xi_{ j, u}^{G}$ is  PPP on $\R_+$ of intensity $\lambda_j$.
\end{itemize}

For any $\xi_ 0$, we can deterministically construct the process $(\xi_t)$ as a piecewise constant function changing when a mark of $\Xi$ appears. The changes follow the following rules
 \begin{itemize}
        \item If it is an exclusion mark $s$ in $e$, then we exchange the value of $\xi$ on the vertices connected by $e$.
        \item If it is a refresh mark $(s,\sigma)$ in $u$ then the value of $\xi$ at $u$ is replaced by $\sigma$.
        \item If it is a Glauber mark $s$ in $u=(x,i)$ of type $j$ then the value of $\xi$  at $u$ is replaced by $\frac{1+\operatorname{sign}(j)}{2}$ if and only if $f_{j}(\psi_x\iota(\xi))=1$.
        \item  If it is a correlated exchange mark $(s,V)$ between $u=(x,i)$ to $v=(x+1,j)$ then the value of $\xi$ at $u$ and $v$ are exchanged if $$V \leq \frac{\widetilde b( \iota(\xi_x), \iota(\xi_{x+1}))}{\bar b}  \xi_{(x,i)}+ \frac{\widetilde b( \iota(\xi_{x+1}), \iota(\xi_x))}{\bar b}  \xi_{(x+1,j)}.$$
 \end{itemize}

The main interest of a graphical construction is the tractability of the dependence in time $0$ and the created correlations. Hence, if we want to know the value of $\xi$ of site $u$ at the time $t$, we need to trace back the history and reveal only the \emph{needed} information. Indeed, the information of site $u$'s content at time $t$: either \emph{jumps} when an exclusion update occurs, \emph{branches} when a Glauber update or a correlated exchange mark happens as it requires additional information, or \emph{is killed} in case of a refresh update.\\

Following these heuristics, for any $E \subset V_{N, \kappa}, t \geq 0$, we define $\cC_t(E)$ the update history of $E$ as the collection of all marks that we encountered when we determined the content of any site $i \in E$ at time $t$. It turns out that the update history can be constructed as a Markovian branching process of information labelled by their origin and the changes evolving on $\mathbb{G}_{N,\kappa}$, and proving this will be the aim of the next subsection.

\subsection{BEP}
\label{subsection: BEP construction}
Let 
\begin{equation}
    \label{eq-def: the set of words}
    \cW= \bigcup_{i=0}^{\infty}\left( V_{N,\kappa} \times B(0, m)^i\right)
\end{equation}
be the set of words starting by an element $u \in V_{N,\kappa}$ and letters $c_1\cdots c_i$ from $\{B(0, m)\times [\kappa]\}^i$. An element $w \in \cW$ labels the datum that $u$ needed backwards in time due to a series of branching events encoded by the  $c_1\cdots c_i$. As usual, if $w^\prime$ is a prefix of $w$, $w$ is referred to as a descendant of $w ^\prime$, and it is denoted by $w\succ w^\prime$.

We will refer to an element $w \in \cW$ as the ``datum'' with label $w$, to avoid any confusion between actual particles of the original process and the particles of the BEP which models the information. Moreover, the time of the expanded process is denoted by $s$ or $t$, and we denote backward times by $\fs$ or $\ft$.\\

Given $E \subset V_{N,\kappa}$, we consider the following construction that encodes the update history: 
\begin{enumerate}
    \item At time $0$, for any $u \in E$, we put a datum labeled as $u$ in the position $u$. Each datum has independent Poissonian clocks as follows:
    \begin{itemize}
        \item \textbf{Glauber clocks}: $q_-$ independent exponential clocks of intensities $\lambda_{-1},.. ,\lambda_{-q_-}$ and $q_+$ independent exponential clocks  of intensities $\lambda_{1},.., \lambda_{q_+}$. 
        \item \textbf{Refresh clocks}: One exponential clock of rate $\upsilon_-.$ We associate to it a mark $ \mathfrak{c} \sim \cB(\bar{\rho})$ . 
    \end{itemize}
    \item Each edge $e$ has two Poisson Point Processes:
    \begin{itemize}
        \item \textbf{Exclusion marks}: One PPP on $\R_+$ of intensity $\beta_N$. 
        \item {\bf Correlated exchange marks}: One PPP on $\R_+\times [0,1]$ of intensity ${  \bar{b}}  dx \otimes \ind_{[0,1]} dv$.
    \end{itemize}
    \item A datum $w$ at the position $(x, i)$ follows the exclusion marks until one of its clocks rings, or it encounters a mark. The following then occurs:
    \begin{itemize}
        \item If it is a Refresh clock, the datum $w$ is killed. 
        \item If it is a Glauber clock, we kill $w$  and we create new data on all the sites $B(x, m)\times [\kappa]$ and they are canonically labeled as children of $w$. 
        \item If it is a correlated exchange mark of the edge $ \{(x, i),(y,j)\}$, we kill $w$, and we create new data on all the sites $\{x,y\}\times [\kappa]$ and they are canonically labeled as children of $w$. 
    \end{itemize}
    \item If a datum $w$ is created
    \begin{itemize}
        \item On an unoccupied site: the datum is referred to as a first-class datum. We associate it with its own independent clocks.
        \item On an occupied site: it is a second-class datum, and it will follow the marks of its associated first-class datum (denoted by $\mathfrak{w}(w)$) already alive on the site. Notice that it means that we may have detected a correlation. 
    \end{itemize}  
    \item For any datum $w$ alive at time $\ft$, we denote by $\ell_\ft(w)$ its position in the graph.
\end{enumerate}
{The Branching exclusion process started from $E$ refers to the set of all marks of all the data that lived between $0$ and $\ft$. } We will denote it by $(\fC_{\ft}(E))_{\ft \geq 0}$. 

We say that the datum $w$ is alive at time $\ft$ if it was born before time $\ft$ and has not been killed by time $\ft$. For any $\fs \leq \ft$, we define $\fW_{\fs}$ as the set of the first class data alive at time $\fs$ and $\mathfrak D_{\fs}$ as the set of data alive at time $\fs$. \\

\paragraph{\emph{Reveal of the content.}}
Let us fix $\ft>0$. Conditionally on $\fC_{\ft}$, given $\sigma \in \{0,1\}^{\fW_{\ft}}$, the content of each data alive in $\fW_{\ft}$, we can deterministically and inductively reveal the content of all data that lived between $0$ and $\ft$ according to the following rules:
\begin{itemize}
\item For a datum $w$ who survived until time $\ft$ has already an assigned content $\sigma(w)$
\item For a datum $w$ who died due to a refresh clock marked $\mathfrak{c}$, its content is fixed to be $\mathfrak{c}$.
\item For a datum $w$ who died due to a Glauber mark $\fs$ of type $j$, its content is fixed as:
$$
f_{j} ((\sigma(w^\prime))_{w^\prime \succ w})\,.
$$
\item For a datum $w$ who died due to a correlated exchange mark $(\fs, \mathfrak{v})$, its content depends only on 
$(\sigma(w^\prime))_{w^\prime \succ w}$. 
\end{itemize} 

Conditionally on the BEP, for any data  $w \in \bigcup_{\fs\in [0,\ft]} \fW_\fs$, there exists an function $F_{w,\ft}$ on $\{0,1\}^{\fW_{\ft}}$ such that any spin configuration $\sigma$ on $\fW_{\ft}$ satisfies
$$
\sigma_{ w }=F_{ w , {\ft}}\left(\left(\sigma_{ w ^{\prime}}\right)_{{w}^{\prime} \in \fW_{\ft}}\right)\, .
$$

The function $F_{ w, \ft}$ describes the dependence of the content of $w$ on the data still alive at time $\mathfrak t$. We emphasise that the assumption (G2-3) and (M1) imply that $F_{ w, \ft}$ is a \emph{non-decreasing}  function. 

{Let us define for any $\xi_0 \in \{0,1\}^{\bbG_{N,\kappa}}$, the configuration $\zeta_\ft(\xi_0)$ by 
\begin{equation}
    \left(\zeta_{\ft,u}(\xi_0)\right)_{u\in E}=\left(F_{ u , {\ft}}\left[\left(\xi_{0,\ell_\ft({w}^{\prime})}\right)_{{w}^{\prime} \in \fW_{\ft}}\right]\right)_{u\in E}
\end{equation}}
In other words, we chose the spins of the data in $\fW_ \ft$ according to their positions and an ``initial configuration'' \footnote{Here, we say initial configuration as it corresponds to an initial configuration in the forward direction. Notice that the procedure of revealing the contents is done in the forward direction.}.
Thanks to the homogeneity of Poisson point processes, we observe 
the following duality relation between the BEP and the expanded process.

\begin{Lemma}[Duality relation]
    \label{lemma: link between bep and expanded}
    For any $\ft>0$ and any $\xi_0 \in \{0,1\}^{\bbG_ {N,\kappa}}$, we have:
    $$
    \left(\zeta_{\ft,u}(\xi_0)\right)_{u\in E} \overset{\cL}{=} (\xi_{\ft,u})_{u\in E} \,,
    $$
    where $(\xi_t)_{\ft>0}$ is the expanded process on $\bbG_{N,\kappa}$ started at $\xi_0$. 
\end{Lemma}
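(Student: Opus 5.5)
The proof follows the classical time-reversal argument for graphical constructions. Fix $\ft>0$ and consider the background process $\Xi$ restricted to $[0,\ft]$. Define the reversed marks $\widehat\Xi$ by sending each mark at forward time $s$ to backward time $\fs=\ft-s$, keeping its type, location and auxiliary variables ($\sigma$, $V$, index $j$). Poisson point processes on $[0,\ft]$ with constant intensity are invariant in law under $s\mapsto \ft-s$, and the families are independent. Hence $\widehat\Xi$ has the same law as the collection of clocks and edge marks used to build the BEP.

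There is one subtlety. In the BEP, clocks are attached to first-class data rather than to vertices, and second-class data follow the marks of their associated first-class datum $\fw(w)$. I would handle this by a coupling in which each first-class datum sitting at $\ell_\fs(w)$ reads the Glauber and refresh marks of $\widehat\Xi$ at that vertex. Because first-class data occupy distinct sites, and by the strong Markov property and memorylessness at the (stopping) times when data move, are created or are killed, the marks read are independent Poisson clocks of the correct rates. Edge marks (exclusion and correlated) are already attached to edges in both constructions. So we may realize $\fC_\ft(E)$ as a deterministic functional of $\widehat\Xi$, namely the backward exploration from $E$.

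The second step is to show, pathwise, that under this coupling $\xi_{\ft,u}=\zeta_{\ft,u}(\xi_0)$ for every $u\in E$. I would argue by induction on the finitely many marks encountered by the exploration (almost surely finitely many on $[0,\ft]$), checking each rule in turn.

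\emph{Exclusion marks.} Forward, an exclusion mark swaps contents; backward, the datum follows the edge. So a datum at $v$ at backward time $\fs$ carries the content of $\xi_{\ft-\fs}(v)$ until its next event.

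\emph{Refresh marks.} The forward value is set to $\sigma$ regardless of the past, which matches killing the datum with content $\mathfrak c=\sigma$.

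\emph{Glauber marks of type $j$ at $(x,i)$.} The forward update depends only on $\iota(\xi)$ on $B(x,m)$ just before the mark and on $f_j$, that is, on $\xi$ on $B(x,m)\times[\kappa]$. These are exactly the children created. If a site is already occupied, the existing datum there already represents the same space-time content, which justifies identifying second-class children with $\fw(w)$.

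\emph{Correlated marks.} The acceptance rule involves $V$ and the contents of $\{x,x+1\}\times[\kappa]$, matching the children created on $\{x,y\}\times[\kappa]$.

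Unwinding the induction down to $\fs=\ft$, where the surviving first-class data sit at $\ell_\ft(w')$ and receive $\xi_{0,\ell_\ft(w')}$, gives $\xi_{\ft,u}=F_{u,\ft}\bigl((\xi_{0,\ell_\ft(w')})_{w'\in\fW_\ft}\bigr)$. Taking laws then yields the lemma.

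The main obstacle is the bookkeeping around data-carried versus site-carried clocks. Precisely:
\begin{itemize}
\item one must verify that the first-class data occupy pairwise distinct sites, so that the coupling assigns each vertex's marks to at most one datum;
\item one must handle what happens when a datum jumps onto a site previously explored, so that no mark is reused inconsistently in a way that breaks independence.
\end{itemize}
I would handle this by working directly in the coupled construction, where clocks are site marks of $\widehat\Xi$. In that construction, independence is automatic, and the BEP's prescribed law is recovered from the strong Markov property of the Poisson processes.
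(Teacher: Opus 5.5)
Your proposal is correct and follows the paper's route. The paper justifies the lemma in one line by the time-homogeneity of the Poisson marks: reversing $\Xi$ on $[0,\ft]$ and letting each first-class datum read the site marks at its current location (legitimate because first-class data always occupy distinct sites) makes the BEP exactly the backward exploration of the forward graphical construction, so $\zeta_{\ft}(\xi_0)$ and $\xi_{\ft}$ agree pathwise on $E$; your write-up supplies the bookkeeping that the paper leaves implicit (distinct sites, second-class data copying their first-class datum, and the rule-by-rule check).
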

{ This statement can be interpreted as a duality relation between BEP and the expanded process. Moreover, it provides a grand coupling for all expanded processes initiated from any initial configuration $\xi_0 \in \{0,1\}^{\bbG_{N,\kappa}}$, and consequently as a grand coupling for the Glauber Misanthrope process thanks to Lemma \ref{lemma: link expanded and misanthrope process}. We stress that this is only a weak form of coupling, as it holds for each fixed $\ft>0$. A stronger, trajectorial coupling is, in fact, possible, but we omit it. \\

\paragraph{\emph{Extremal configurations}}  Through this branching structure, we can construct the expanded process from extremal positions $\vee$ and $\wedge$ by using the BEP graphical construction.
\begin{equation}
    \label{eq: vee and wedge construction}
    \forall w \in V_{N, \kappa}, \, \wedge_\ft(w)= F_{w, \ft}((1)_{w^\prime \in \fW_\ft}) , \qquad \vee_\ft(w)= F_{w, \ft}((0)_{w^\prime \in \fW_\ft}).
\end{equation}

Using the monotonicity of the functions $(F_{u,\ft})_{u \in V_{N,\kappa}}$, we conclude that for any $\ft>0$,  we have a.s
\begin{equation}
    \vee_ \ft \leq \wedge_ \ft\,.
\end{equation}}

\subsection{{Locating the Information}}  Thanks to the construction of the BEP, we can easily identify the data involved in determining the content of any datum $w$.  Indeed, they correspond to the only data that appear anywhere in its history, as a descendant of $w$. 
\begin{definition}[Update History]
    \label{def: information update history}
    We define the update history for any $A \subset \bigcup_{\fs\in [0,\ft]} \fD_\fs$ as:
\begin{equation*}
    \mathscr{H}_{A}(E,\ft)= \left\{ w^\prime \in \bigcup_{\fs\in [0,\ft]} \fD_\fs; \text{ such that there exist } w \in A, \,\, w^\prime \succ w \right\}.
\end{equation*}
and we denote by ${\fh}_{A}(E,\ft)$ its cardinal. \\
For any $A\subset\bigcup_{\fs\in [0,\ft]} \fD_\fs $, we denote by $\fC_\ft(E, A)$ the set of all marks encountered by any data in the update history of any $w\in A$.
\end{definition}
Notice that we used the same notation as for the BEP, which should not cause any confusion. Indeed, by a slight abuse of notation, we identify
$$
\fC_\ft(E) = \fC_\ft(E,E) \,.
$$

Let us emphasise that the update history does not account for the potential correlation created between two different positions. Indeed, to capture this, one would also need to explore the branches beyond the first encounter with a first-class datum. 
\begin{definition}[Information Cluster]
     \label{def: information cluster}
We define the information cluster for any $A \subset \bigcup_{\fs\in [0,\ft]} \fD_\fs$ as 
\begin{equation}
    \mathscr{C}_{A}(E,\ft)= \left\{ w^\prime \in \bigcup_{\fs\in [0,\ft]} \fW_\fs; \text{ such that } \exists w \in A, \, \exists w_1 \succ w^\prime , w_2 \succ w, \text{ with } \mathfrak{w}(w_1)=\mathfrak{w}(w_2)\right\}\,.
\end{equation}
We denote by ${\fn}_{A}(E,\ft)$ its cardinal.
\end{definition}

In words, the information cluster collects all first-class data $w^\prime$ such that its descendant intersects the history of some $w \in A$. Put differently, it corresponds to all first-class data that are connected (both forward and backwards in time) to the datum $w$. 

For convenience, we will omit the dependence on $E$ in $\mathscr{H}, \mathscr{C}, \fn$ and $\fh$, when it is clear from the context.

\subsection{Categorization of the positions}
\label{subsection: RGB}
In this subsection, we consider $E=V_{N,\kappa}$ for the construction of the BEP run up to $\ft>0$ and $\xi_0 \in \{0,1\} ^{\bbG_{N,\kappa}}$. \\

Thanks to the construction of the BEP, we can distinguish the positions $V_{N,\kappa}$ according to the way their contents are mutually correlated and depend on $\xi_0$. For any position $u \in V_{N,\kappa}$, we have three possibilities:
\begin{itemize}
    \item A descendant of $u$ survived up to time $\ft$ (i.e $\fD_{\ft} \cap \mathscr{H}_u(\ft) \neq \emptyset $):  This means that in the process of revealing the content of $u$, we used $\xi_0$. Put differently, the content of $u$ depends on the initial configuration. In this case, we say that the position $u$ is \textcolor{red}{\emph{red}} and we define the set of \textcolor{red}{red positions} as
    \begin{equation}
        \label{eq: def red}
         \mathscr{R}(\ft)=\left\{v \in V_{N,\kappa}; \text{ such that } \quad  \fD_{\ft} \cap \mathscr{H}_v(\ft) \neq \emptyset \right\},
    \end{equation}
    Moreover, the monotonicity of $(F_{u,\ft})_{u\in V_{N,\kappa}}$ imply
    \begin{equation}
        \label{eq: def red with vee and wedge}
    \sR(\ft)=\{u \in V_{N,\kappa}; \text{ such that } \quad \wedge_ \ft(u) \neq \vee_\ft(u)\}
    \end{equation}
    We denote by 
    \begin{equation}
        \label{eq: def number of red}
        \mathpzc{r}_\ft=\operatorname{Card}(\mathscr{R}(\ft)) = \sum_{u \in V_{N,\kappa}} \wedge_\ft(u) -  \vee_\ft(u).
    \end{equation}
    {Thanks to Lemma \ref{lemma: link between bep and expanded}, we recover the expression defined in \eqref{eqdef: mathpzc r intro misanthrope}.}
    \item  No descendent of $u$ survived up to time $\ft$ but the history  of $u$ is not an isolated cluster (i.e $\sC_u(\ft)\neq \sH_u(\ft)$). In other words, even though the content of $u$, i.e $\zeta_{\ft,u}(\xi_0)$,  does not depend on $\xi_0$, it is correlated to the content of another position. In this case, we say that the position $u$ is  \textcolor{ForestGreen}{\emph{green}} and we define the set of \textcolor{ForestGreen}{green positions} as
    
    \begin{equation}
        \label{eq: def green}
         \mathscr{G}(\ft)=\left\{v \in V_{N,\kappa}; \text{ such that } \quad  \sC_v(\ft)\neq \sH_v(\ft)\right\},
    \end{equation}

    Let us emphasise that the green positions are more delicate to control, as they are truly correlated. Moreover, around the equilibrium, we should expect them to be predominant since the invariant measure is not a product measure. We also point out that, thanks to the grand coupling, the contents of green positions, i.e $\left(\zeta_{\ft,u}(\xi_0)\right)_{u\in \sG(\ft)}$, are \emph{the same} for any initial configuration $\xi_0$.

    \item The history of $u$ is an isolated cluster, whose history does not percolate up to the initial time (i.e $\sH_{u, \ft}=\{u\} $ and $ \{u\} \cap \fW_\ft= \emptyset $). In this case, we say that the position $u$ is \textcolor{blue}{\emph{blue}} and we define the set of \textcolor{blue}{blue positions} as for any $\fs \leq \ft$, 
    \begin{equation}
        \label{eq: def blues}
         \mathscr{B}(\fs)=\left\{v \in V_{N,\kappa}; \text{ such that } \quad  \sH_{u, \fs}=\{u\}  \text{ and } \{u\} \cap \fW_\fs= \emptyset \right\}\, .
    \end{equation}
    For $\fs =\ft$, we have 
    $$   \mathscr{B}(\fs)= V_{N,\kappa} \setminus \left[\mathscr{G}(\ft) \cup     \mathscr{R}(\ft)\right]\, .$$
    Let us observe that conditionnally on $(\sC_{u}(\ft))_ {u \in V_{N,\kappa}}$, 
    \begin{equation}
        \left(\zeta_{\ft,u}(\xi_0)\right)_{u \in \mathscr{B}(\ft)}\overset{\cL}{=}\bigotimes_{u \in \mathscr{B}(\ft)}\cB(\bar{\rho}).
    \end{equation}
    and it is independent of the other contents.
\end{itemize}

Let us denote by $\cF^{h}_\ft$ the sigma-algebra generated by the Glauber clocks and refresh clocks and exclusion and correlated exchange marks\footnote{Notice that we have not included the marks $c$ associated to the refresh clocks.} as well as $(F_{u,\ft})_ {u \in \mathscr{G}(\ft) \cup     \mathscr{R}(\ft)}$. The discussion can be summed up by the following lemma 
\begin{Lemma}
    \label{lemma: RGB conditionnally on the sigma algebra}
    Conditionnally on $\cF^{h}_\ft$, there exist $\sigma \in \{0,1\}^{\sG(\ft)}$ such that for any $\xi_0 \in \{0,1\}^{\mathbb{G}_{N,\kappa}}$, we have 
    \begin{equation}
        \label{eq: mesure RGB conditionnellement à rgb}
        (\left(\zeta_{\ft,u}(\xi_0)\right)_{u \in \mathscr{R}(\ft)},\left(\zeta_{\ft,u}(\xi_0)\right)_{u \in \mathscr{G}(\ft)},\left(\zeta_{\ft,u}(\xi_0)\right)_{u \in \mathscr{B}(\ft)} \overset{\cL}{=} \delta_{\left(\zeta_{\ft,u}(\xi_0)\right)_{u \in \mathscr{R}(\ft)}} \otimes \delta_{\sigma} \otimes \bigotimes_{u \in \mathscr{B}(\ft)}\cB(\bar{\rho}).
    \end{equation}
\end{Lemma}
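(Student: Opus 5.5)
The plan is to condition on $\cF^{h}_\ft$ and read off the law of each of the three blocks of positions from the reveal-of-content procedure of Subsection \ref{subsection: BEP construction}. The key point is that $\cF^h_\ft$ fixes all the clock times and all exclusion and correlated exchange marks. It therefore fixes the whole branching structure $\fC_\ft(V_{N,\kappa})$, hence the partition $V_{N,\kappa}=\sR(\ft)\sqcup\sG(\ft)\sqcup\sB(\ft)$ and the functions $F_{u,\ft}$. The only remaining randomness is the family of Bernoulli marks $\mathfrak c$ attached to refresh clocks that rang before $\ft$. These marks are i.i.d.\ $\cB(\bar\rho)$ and independent of $\cF^h_\ft$, since $\Xi^R_u$ is a Poisson process with product intensity $\upsilon_-\dd t\otimes\cB(\bar\rho)$ (marking theorem).

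The three blocks are then handled as follows.

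\textbf{Blue positions.} For $u\in\sB(\ft)$ we have $\sH_u(\ft)=\{u\}$ and $u\notin\fW_\ft$. Hence the datum $u$ died without branching, which can only happen through its refresh clock, and $\zeta_{\ft,u}(\xi_0)=\mathfrak c_u$. Distinct blue positions carry distinct refresh marks, and these marks appear in no other datum's history. Indeed, if $\mathfrak c_u$ were used elsewhere, some second-class datum would follow $u$, and then $\sC_u\neq\sH_u$, making $u$ green. So the blue contents are i.i.d.\ $\cB(\bar\rho)$ and independent of all the other refresh marks.

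\textbf{Red positions.} $\zeta_{\ft,u}(\xi_0)=F_{u,\ft}\big((\xi_{0,\ell_\ft(w')})_{w'\in\fW_\ft}\big)$, and given $\cF^h_\ft$ this is a deterministic function of $\xi_0$. This gives the Dirac factor. One should note that, conditionally, $F_{u,\ft}$ is itself $\cF^h_\ft$-measurable, because $(F_{u,\ft})_{u\in\sG\cup\sR}$ is included in the $\sigma$-algebra. This inclusion is exactly what absorbs the dependence on the refresh marks used in the red histories.

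\textbf{Green positions.} For $u\in\sG(\ft)$ no descendant survives, so $F_{u,\ft}$ is a constant, and this constant is $\cF^h_\ft$-measurable. Set $\sigma_u$ equal to this value; it does not depend on $\xi_0$.

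It remains to check independence. The red and green contents are $\cF^h_\ft$-measurable, hence deterministic given $\cF^h_\ft$. The blue contents are functions of the blue refresh marks only, and these are independent of $\cF^h_\ft$. The conditional law is therefore the stated product. Finally, Lemma \ref{lemma: link between bep and expanded} transfers this to the expanded process if needed.

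The main obstacle is this independence step. Conditioning on $(F_{u,\ft})_{u\in\sG\cup\sR}$ might a priori carry information about the blue refresh marks. To rule this out, I would verify that the $F$'s for green and red positions are measurable with respect to the clocks together with the refresh marks of non-blue data only. This is where the characterization ``$u$ blue $\Leftrightarrow$ $\mathfrak c_u$ appears in no other history'' must be argued carefully from the first-class/second-class rules.
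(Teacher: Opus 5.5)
Your proposal is correct and follows the paper's route. The paper gives no separate proof: it presents the lemma as a summary of the preceding red/green/blue discussion, in which red and green contents are fixed once $\cF^h_\ft$ is given, and blue contents are their own refresh marks, i.i.d.\ $\cB(\bar\rho)$ and independent of everything else.

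Your check that the red and green $F_{u,\ft}$ involve only refresh marks of non-blue data supplies the independence step the paper leaves implicit; the reason is that a blue root whose mark is reused would have a second-class follower and hence be green. One small correction to your first paragraph: the clock times alone do not fix all the $F_{u,\ft}$. Only their explicit inclusion in $\cF^h_\ft$ does, and only for red and green $u$.
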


We emphasise that introducing these different classes of positions (red, blue, and green), and studying their associated information clusters or update histories, is a key ingredient of the information percolation framework \cite{univ_ising_cutoff,LS_exposition,Lubetzky_Sly}.

Heuristically, if there are too many red positions, then we expect that the law significantly depends on the initial condition. Hence, we do not expect that mixing occurred. Let us give a first control in expectation. 
\begin{Lemma}[A first control of the red positions]
    \label{lemma: control of REDs exp control} Assume \eqref{cond: condition general cluster size} or \eqref{cond: trivial cluster size}.
    There exists $c_1, c_2>0$, such that for any $\fs, \ft >0$, we have
    \begin{equation}
    e^{-c_1 t} \, \Esp{\mathpzc{r}_{\fs}} \, \leq \,\Esp{\mathpzc{r}_{\ft+ \fs}}\, \leq \, e^{-c_2\ft }  \, \Esp{\mathpzc{r}_{\fs}} \,.
    \end{equation}
    where $\mathpzc r$ is defined in \eqref{eq: def number of red}.
\end{Lemma}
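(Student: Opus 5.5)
The plan is to work directly with the representation \eqref{eq: def number of red}, $\mathpzc r_\ft = \sum_u \wedge_\ft(u)-\vee_\ft(u)$, and to use the Markov property of the grand coupling in forward time. Run the two extremal expanded processes $\wedge$ and $\vee$ with the same Poisson marks $\Xi$ (the trajectorial version of the coupling, which attractivity \eqref{eq: attractivity} provides). The discrepancy set $\{u:\wedge_s(u)\neq\vee_s(u)\}$ then evolves as a process of second-class particles. By Lemma \ref{lemma: link between bep and expanded}, $\Esp{\mathpzc r_s}$ equals the expected number of such discrepancies at forward time $s$. I will derive a differential inequality for $m(s):=\Esp{\mathpzc r_s}$ of the form
\begin{equation*}
-c_1\, m(s)\;\le\; m'(s)\;\le\; -c_2\, m(s),
\end{equation*}
and then integrate it between $\fs$ and $\fs+\ft$ (Gr\"onwall). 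This gives both inequalities of the lemma.

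\emph{Computing the drift.} I apply the generator $\cA_{\rm Exp}$ to $\Phi(\wedge,\vee)=\sum_u(\wedge(u)-\vee(u))$ under the basic coupling, term by term.
\begin{itemize}
\item Exclusion exchanges only permute values, so they leave $\Phi$ unchanged. This is essential, since their rate $\beta_N$ may be huge.
\item Refresh updates occur at rate $\upsilon_-$ at each discrepant site and set both copies equal there. They contribute exactly $-\upsilon_-\Phi$.
\item A Glauber mark of type $j$ at $u=(x,i)$ can create a discrepancy only if $f_j(\psi_x\iota(\wedge))\neq f_j(\psi_x\iota(\vee))$. Locality \eqref{eq: assumption locality} says this requires a discrepancy in $B(x,m)\times[\kappa]$. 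Each discrepancy can therefore be charged with at most $(2m+1)\kappa$ sites, each receiving at most $\sum_{j\ne \pm(q_\pm+1)}\lambda_j=\upsilon_+$ in rate. This gives a creation term of order $(2m+1)\kappa\,\upsilon_+\Phi$.
\item A correlated exchange with acceptance variable $V$ produces a new discrepancy only when the two copies' acceptance thresholds differ. That requires a discrepancy at $x$ or $x+1$, and the affected edges number of order $(2\kappa-1)\kappa$ per discrepancy, each at rate at most $2\bar b$. This gives a term of order $2(2\kappa-1)\kappa\bar b\,\Phi$.
\end{itemize}
Monotonicity (M2) and the monotone decomposition \eqref{eq: decomposition of c} ensure that every update keeps $\vee\le\wedge$. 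Hence discrepancies can only be created or destroyed, never change sign, and each event changes $\Phi$ by a bounded amount.

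\emph{The two bounds.} For the upper bound, I sum the creation terms and compare with $\upsilon_-$. Under \eqref{cond: trivial cluster size} (resp.\ \eqref{cond: condition general cluster size}), the constants are arranged exactly so that creation minus destruction is at most $-c_2\Phi$ with $c_2>0$. The $-1$ in $\kappa(2m+1)-1$ suggests the count should exclude the discrepant site itself: a Glauber update at an already discrepant site cannot increase $\Phi$. The lower bound is easier. The rate of destroying a discrepancy is bounded by a constant times $\Phi$, namely refresh $\upsilon_-$, plus Glauber annihilation/creation at total rate at most $\max c_\pm$ per box, plus correlated exchanges at rate $O(\kappa\bar b)$. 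This constant is uniform in $N$ and, again, exclusion does not contribute. So $m'\ge -c_1 m$.

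\emph{Main obstacle.} The delicate step is the careful counting in the upper bound. I must show that the combinatorial factors match the hypotheses exactly, uniformly in $N$. I would also check that correlated exchanges, whose rates depend on the occupations $\iota$, only create discrepancies adjacent to existing ones. If the hypotheses turn out to be tailored to the BEP rather than to forward counting, my fallback is to bound $\Esp{\mathpzc r}$ through the backward picture. Concretely, $\Esp{\mathpzc r_{\ft+\fs}}\le\sum_u\Esp{|\fD_{\ft+\fs}\cap\sH_u|}$, and for the BEP the number of descendants grows at net rate at most (branching rate $\times$ offspring) $-\upsilon_-$, which the same inequalities make negative. I would still use the forward computation for the lower bound.
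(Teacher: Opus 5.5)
Your argument is correct within the paper's framework, and your upper bound is essentially the paper's. The paper also differentiates $\Esp{\wedge_\ft(0)-\vee_\ft(0)}$ with the forward generator of the expanded process and closes with Gr\"onwall. Your refined Glauber count (a discrepant box cannot gain, so only the $(2m+1)\kappa-1$ other boxes of $B(x,m)\times[\kappa]$ are charged) is the same count the paper performs in expectation at a single box, using a union bound and translation invariance.

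Two things differ. First, the paper charges nothing to the correlated exchanges. The quantity $\sum_u\wedge(u)-\sum_u\vee(u)$ is a difference of total particle numbers, and every exchange move, exclusion or correlated, conserves the particle number of each copy. So these terms vanish identically; the paper phrases this through transitivity of $\bbG_{N,\kappa}$. This gives $c_2=\upsilon_--((2m+1)\kappa-1)\upsilon_+$, so the $\bar b$-parts of (L1) and (L2) are not needed. Your extra $2(2\kappa-1)\kappa\bar b$ is a harmless overestimate, and the ``main obstacle'' you flag disappears.

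Second, the paper proves the lower bound backward. The root datum $u$ of the BEP is killed at a bounded rate. On the event that it is still alive at backward time $\ft$, the Markov property of the BEP gives $\wedge_{\ft+\fs}(u)-\vee_{\ft+\fs}(u)$ the law of the discrepancy at $\ell_\ft(u)$ in an independent BEP run for time $\fs$. Translation invariance then removes the dependence on $\ell_\ft(u)$. Your forward version is just as short: a discrepancy is destroyed only at its own box, at rate at most $\upsilon_-+\upsilon_+$ once exchanges are dropped. It also yields both inequalities from one differential inequality, whereas the backward version avoids any drift computation.

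One point needs tightening, and it affects the paper's text as much as your proposal. Under the common-mark coupling you use, correlated exchanges do not preserve the box-level order $\vee\le\wedge$ when $\tilde b$ is not constant. Your own creation mechanism is an instance. If both copies read $(1,0)$ on the edge $\{(x,i),(x+1,j)\}$ but $\iota(\wedge)_x>\iota(\vee)_x$, then for $V$ between the two acceptance thresholds only $\wedge$ moves, leaving $\wedge(x,i)=0<\vee(x,i)=1$. Conversely, while the order holds, exchanges merely relocate discrepant boxes and never create new ones. Once the order can fail, $\sum_u(\wedge(u)-\vee(u))$ no longer counts discrepant boxes, and your charging bound does not close.

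The fix is available because $\Esp{\mathpzc r_\ft}=\Esp{\sum_x\top_\ft(x)}-\Esp{\sum_x\bot_\ft(x)}$ depends only on the two marginal laws. Run the same computation for the projected GM process under an attractive coupling, which the paper asserts exists from (M2) and the monotonicity of $c_\pm$. Set $d_x=\top_x-\bot_x\ge0$.
\begin{itemize}
\item All misanthrope jumps drop out by conservation.
\item The Glauber contribution at $x$ lies between $-(\upsilon_-+\upsilon_+)d_x$ and $-\upsilon_-d_x+(\kappa-d_x)\upsilon_+\ind\{\top\neq\bot\text{ on }B(x,m)\}$.
\item Summing over $x$ and using $(\kappa-d_y)+2m\kappa\le((2m+1)\kappa-1)d_y$ for $d_y\ge1$ gives the lemma with $c_1=\upsilon_-+\upsilon_+$ and the paper's $c_2$.
\end{itemize}
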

Notice that this implies that the predictive mixing time $T_{\rm eq}^N$ defined in \eqref{eq: prediction of the mixing time} undergoes an abrupt transition, which is the reason  why we actually have a Cutoff for this system.  Indeed, we have for any $\eps,\eps^\prime \in \R_+^\star$,
\begin{equation}
    \label{cutoff of the predictive time_2}
    \frac{T_{\rm eq}^N(\eps)}{\log(N)}\, \text{ is bounded \quad and \quad}  \frac{T_{\rm eq}^N(\eps)}{T_{\rm eq}^N(\eps^\prime)} \underset{N \rightarrow 0}{\longrightarrow} 0\,. 
\end{equation}
The same phenomenon occurs for Glauber dynamics at high temperature (see for example \cite[Claim 3.3]{univ_ising_cutoff}). The proof of this lemma is delayed to Appendix \ref{section-appendix: control of Red}.\\

On the contrary, the blue positions are very helpful as they are completely decoupled from all the others and from the initial conditions. The following lemma ensures that they do exist in a significant number and they are not localised. 

\begin{Lemma}
    \label{lemma: there is a minimum of well positionned Zs}
    For any $a>0$, there exists $c_3, c_4>0$ , such that for any  $\ft > c_{4}$, we have
    \begin{equation}
        \bbP\left[ \forall \fs \in [\ft - 1, \ft], \quad 
         \forall u \in V_{N,\kappa}, \, \operatorname{dist}(u, \sB(\fs) ) \leq c_3 \log(N) \right] \geq 1 - \frac{1}{N^a}\,.
     \end{equation}
\end{Lemma}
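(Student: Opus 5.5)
The plan is to find, near every position, a vertex that receives a refresh mark in the last unit of backward time before anything else happens to it, and then to take a union bound.

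\emph{Reduction to a local event.} Fix $\ft > c_4$. Recall that $u \in \sB(\fs)$ means the datum started at $u$ is killed by a refresh clock before any branching. This requires no Glauber clock and no correlated exchange mark in the meantime, and the datum must not survive to time $\fs$. It is therefore enough to exhibit, for every $u$, a vertex $v$ with $\operatorname{dist}(u,v)\le c_3\log N$ such that the following event $A_v$ holds:
\begin{itemize}
\item the datum started at $v$ rings its refresh clock within backward time $\delta$;
\item before that, it meets no exclusion mark, no correlated exchange mark and no Glauber clock.
\end{itemize}
Here $\delta$ is small, chosen so that $\delta\le \ft-1$. On $A_v$ the history of $v$ is $\{v\}$ and it is killed before $\ft-1$, so $v\in\sB(\fs)$ simultaneously for all $\fs\in[\ft-1,\ft]$. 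This removes the supremum over $\fs$ for free, since $\sB$ is monotone once the datum is killed.

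\emph{Probability of the local event.} The event $A_v$ depends only on the Poisson marks in a fixed time window attached to $v$ and its incident edges. Its probability is at least $p_N:=(1-e^{-\upsilon_-\delta})\,e^{-\delta(C\beta_N+C')}$, where $C$ bounds the degree times $\kappa$. Since $\beta_N$ may be as large as $N^2$, we take $\delta=\beta_N^{-1}$. Then $p_N\ge c/\beta_N$.

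This is the delicate point: the event would be too rare to give a logarithmic distance. I would handle it in one of two ways.
\begin{itemize}
\item Use only the refresh clock. An exclusion move merely transports the datum without branching, so we only need the datum to be refreshed before any Glauber or correlated exchange event. That has probability at least $q:=\upsilon_-/(\upsilon_-+\upsilon_++4\kappa\bar b)>0$ by competing exponential clocks, independently of $\beta_N$. It happens before backward time $1$ with probability at least $q(1-e^{-c})$.
\item Independence then becomes an issue, because the datum moves with exclusion marks. To recover it, I would restrict attention to vertices $v_k$ spaced by $L\gg1$ along the torus. Given the total exclusion rate per vertex, the trajectory over time $1$ travels a distance of order $\sqrt{\beta_N}$ at most, with Gaussian tails. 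I would therefore rather pick a fixed vertex $v$ and use a modified event: the datum's refresh clock rings within time $\beta_N^{-1}$, with no exclusion mark on the incident edges during that time. This has probability at least $c/\beta_N\ge cN^{-2}$ in all regimes considered.
\end{itemize}
Then the events $A_{v}$, for $v$ ranging over $V_{N,\kappa}$, are independent for vertices at distance at least $2$, since they involve disjoint Poisson processes.

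\emph{Union bound.} Partition $\T_N$ into arcs of length $\ell=c_3\log N$, and inside each arc take $\ell/2$ vertices at mutual distance $2$. Note, however, that if $p_N\sim N^{-2}$ then $\log N$ trials are not enough. I would therefore reuse time: split $[0,\ft-1]$ (which has length at least $c_4-1$) into $K$ disjoint windows and look for $A_v$ in any of them. The main obstacle is making the success probability per trial bounded below uniformly in $N$; the step I would attempt first is the competing-clocks bound $q$ above. With trials independent and of probability at least $q'$, the probability that an arc contains no blue vertex is at most $(1-q')^{\ell/2}\le N^{-a-1}$ once $c_3$ is large. A union bound over the $N$ arcs gives the required $1-N^{-a}$. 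Every $u$ lies within distance $2\ell$ of a blue vertex, and we rename $c_3$ accordingly.
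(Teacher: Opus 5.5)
\textbf{The gap.} Your final step assumes exactly what is at stake. You need an arc of $c_3\log N$ sites to supply order $\log N$ independent trials, each succeeding with probability bounded below uniformly in $N$. Neither of your candidate events delivers both properties.
\begin{itemize}
\item The frozen event (refresh within time $\beta_N^{-1}$, no exclusion mark meanwhile) is independent for vertices at distance $2$. But its probability is of order $\beta_N^{-1}$, so $\log N$ trials are useless.
\item Your competing-clocks bound $q$ is correct for a single datum. But before it is refreshed, the datum travels a distance of order $\sqrt{\beta_N}$, which is of order $N$ when $\beta_N=N^2$. Spacing the starting vertices by $L=O(\log N)$ therefore does not make these events depend on disjoint marks.
\item The time-reuse device does not repair this. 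A refresh at $v$ in a later window concerns whichever datum sits at $v$ at that moment, not the datum started at $v$, which has already moved or branched.
\end{itemize}

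\textbf{The missing idea.} The motion is harmless once you condition on it. Condition on the exclusion marks, the correlated exchange marks and the Glauber clocks. Until it dies, each datum follows the stirring path determined by the exclusion marks alone. So the refresh clocks carried by the data started in $I$ remain independent $\mathrm{Exp}(\upsilon_-)$ variables. Let $G_u$ be the first Glauber ring or correlated mark met along the path of the datum started at $u$. Then
\[
\bbP\bigl[\,I\cap\sB(\fs)=\emptyset \bigm| \text{marks}\,\bigr]\le \prod_{u\in I} e^{-\upsilon_-\min(G_u,\fs)}\,.
\]
Each such mark hits at most two data, since distinct paths occupy distinct sites. Hence $\#\{u\in I:\ G_u<\delta\}\le 2P$, where $P$ is dominated by a Poisson variable of mean $(\upsilon_++2\kappa\bar b)|I|\delta$. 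For fixed small $\delta$, the event $P>|I|/4$ has probability at most $e^{-c|I|}$. On the complement, at least half of the $u\in I$ have $G_u\ge\delta$, so the product is at most $e^{-\upsilon_-\delta|I|/2}$ (recall $\fs\ge\ft-1\ge\delta$).

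Take $I$ to be an arc of $c_3\log N$ sites times $[\kappa]$ and take a union bound over the $N$ arcs. This closes the argument with no locality or spacing needed. It is the content of the paper's sketch: an empty $E\cap\sB$ forces every datum of $E$ to meet a branching mark before its refresh, which is exponentially unlikely in $|E|$.

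\textbf{A caution on uniformity in $\fs$.} In the upper bound, the lemma is applied to the forward blue process $\mathpzc B_s$ on a whole time window, through the event $\cD(D_N,t)$. There blue sites are destroyed and recreated, so under that reading $\sB(\fs)$ is not monotone in $\fs$, and the supremum over $\fs$ is not free. You would then need to discretise $[\ft-1,\ft]$ with mesh $\beta_N^{-1}$, using $\beta_N=N^2$ as in the upper bound. At each grid time, ask for a blue datum refreshed before, and undisturbed during, the mesh interval ending at that grid time. A union bound over grid times then goes through, since the bound $N^{-cc_3}$ absorbs the polynomially many grid points.
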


We omit the proof of this lemma as it is a direct adaptation of \cite[Lemma 32]{HQ}. Indeed, for any $E \subset V_{N,\kappa}$, $E \cap \sB(\ft) = \emptyset$ means that all the sites $u \in E$ are occupied by a datum created due to correlated exchange or Glauber marks or was created at $\fs=0$. Provided that $E$ is big enough, this is unlikely. The main difference here is that we need only to take into account that the correlated exchange marks which occur at rate $2 \bar b$. 
\subsection{A single red perturbation}
\label{subsec: A single red perturbation}
Let us first introduce a notation for measures on $\{0,1\}^{\bbG_{N,\kappa}}$ of the form \eqref{eq: mesure RGB conditionnellement à rgb} with a single red position. 
Given\footnote{Notice that we identified the configuration $\mathpzc{B} \in \{0,1\}^{\bbG_{N,\kappa}}$ and the set of blue positions $ \{v \in V_{N,\kappa}, \, \mathpzc{B}_v=1 \, \}$.  } 
$$\mathpzc{B}_0\in \{0,1\}^{\bbG_{N,\kappa}}, \, U_0 \in V_{N,\kappa}\setminus \mathpzc B_0\, , \zeta \in \{0,1\}^{V_{N,\kappa} \setminus [\mathpzc{B}_0 \cup \{U_0\}]} \quad \text{and}\quad  \sigma \in \{0,1\}\,, $$
 We denote by
\begin{equation}
    \label{eq: one perturbation}
    \mu(\mathpzc B_0, U, \zeta, \sigma) = \delta_{\sigma}^{U_0} \otimes \delta_{\zeta} \otimes  \cB(\bar{\rho})^{\otimes \mathpzc{B}_0} \quad \text{and} \quad \mu^\prime(\mathpzc B_0, U, \zeta) = \delta_{\zeta} \otimes \cB(\bar{\rho})^{\otimes \mathpzc{B}_0 \cup \{U_0\}}
\end{equation}
where $\delta_{\sigma}^{U_0}$ is the Dirac measure for the position $U_0$.

The key idea in \cite{HQ} is to detect to what extend the single red perturbation in $U_0$ distinguishes between two expanded processes started from $\mu(\mathpzc B_0, U, \zeta, \sigma)$ and $\mu^\prime(\mathpzc B_0, U, \zeta)$. 
 
To this end, we use the forward graphical construction of the expanded process introduced in Subsection \ref{subsection: Graphical construction of the process and its history}. We keep track of the blue positions as a process $\mathpzc{B}_t$ on $\{0,1\}^{\bbG_{N,\kappa}}$, and of the location of the red perturbation as a tracer $U_t$ taking values in $V_{N,\kappa} \cup \{\dagger\}$. {We precise that $\dagger$ is a \emph{cemetery} state as the tracer may be killed.}

We introduce the common background process.
$$
\widehat{\Xi}= \left(\left(\Xi_e^{E}\right)_{ e \in E_{N,\kappa} }, \left(\Xi_e^{H}\right)_{ e \in E_{N,\kappa} } ,
\left(\Xi_{e}^{C}\right)_{  e \in E_{N,\kappa} },
\left(\Xi_{j,u}^{G}\right)_{\substack{-q_- \leq j \leq q_+,\\ u \in V_{N,\kappa}}}, \left(\Xi_{\pm,u}^{R}\right)_{u \in V_{\N,\kappa}} \right) \,.
$$
Here, we have added the \textbf{Hesitating Exclusion Marks} $\left(\Xi_e^{H}\right)_{ e \in E_{N,\kappa} }$ where for any $e\in E_{N,\kappa}$, $\Xi_e^{E}$ is a PPP on $\R_+$ of intensity $2 \beta_N$. 

We construct the expanded process with a tracer $(\xi_t, \mathpzc{B}_t, U_t)_{t \ge 0}$ as a c\`adl\`ag, piecewise constant process driven by the marks $\widehat{\Xi}$. The evolution is given by the following rules:
 \begin{itemize}
        \item \textbf{For a refresh mark in $u$:} The configuration $\xi$ is updated as in the original dynamics. A blue particle is created at $u$ (i.e.\ $\mathpzc{B}_t(u)=1$). If $U_t = u$, then $U_t$ is sent to 
        $\dagger$.
        
        \item \textbf{For Glauber mark or a correlated exchange mark:} The configuration $\xi$ evolves as in the original dynamics. Any blue site impacted by the mark is removed (i.e.\ $\mathpzc{B}_t(u)=0$ for the corresponding sites). If $U_t$ is impacted by the mark, it is sent to $\dagger$.

        \item \textbf{For an exclusion mark on $e=(u,v)$ and $\cB(u)=0$ or $\cB(v)=0$:} The values of $\xi$ and $\mathpzc{B}$ at $u$ and $v$ are exchanged. If $U_t \in \{u,v\}$, it moves to the other endpoint.
        \item 
        \item \textbf{For an hesitating exclusion mark on $e=(u,v)$ and $\cB(u)=\cB(v)=1$:}  We sample an independent Bernoulli r.v with parameter $\frac{1}{2}$. If it is a success, we proceed as if for an exclusion mark.
 \end{itemize}

\begin{remark}[The construction of $U$] The process $U$ detects whether the single red position at $U_0$ impacted the value of $\xi_t$. A refresh mark kills the perturbation, as it erases its effect. However, a Glauber or correlated exchange mark also kills it since the corresponding content is revealed and already impacts the value $\xi_t$. 

Notice that without the hesitating marks $\Xi^H$, the position of the single red perturbation would have a deterministic, conditionally on the common background. Whereas by adding them, the position of the red perturbation is still random conditioned on  $\widehat \Xi $, thanks to the added independent Bernoulli random variables. 
\end{remark}
\begin{remark}[A version with mutltiple tracers]
    \label{remark: multiple tracers}
Notice that we are tracing the position of only one perturbation. We emphasise that we can use the same construction to trace many perturbations started from different positions which evolve independently, conditionally on $\widehat \Xi$. 
\end{remark}

Once again, this construction traces the impact of the perturbation on the value of $\xi$, similarly to the reveal content procedure of the BEP. Indeed, we observe that, conditionally on $\widehat{\Xi} $, there exist  for any $t \ge 0$ a function $\cX_t$ which \emph{does not depend on $\sigma$} such that 
\[
\xi_t = \mathcal{X}_t\big(\zeta, (\xi_{(\tau_i,u_i)})_i, (\xi_t(u))_{u \in \mathpzc B_ t}\big), \text{ and } \xi_t^\prime = \mathcal{X}_t\big(\zeta, (\xi^\prime_{(\tau_i,u_i)})_i, (\xi^\prime_t(u))_{u \in \mathpzc B_ t}\big)\,,
\]
where $(\tau_i,u_i)_{i \leq k}$ are the space-time points at which a blue site is removed due to a Glauber or correlated exchange mark and $\xi_t$ (resp. $\xi_t^\prime$) is an expanded process at $\mu(\mathpzc B_0, U_0, \zeta, \sigma)$ (resp. $\mu^\prime(\mathpzc B_0, U_0, \zeta)$).

The problem thus reduces to determining whether there exists $i \in \mathbb{N}$ such that $U_{\tau_i} = u_i$ or  if  $U_t \neq \dagger$. This point of view ensures that it is indeed sufficient to study only $\mathpzc B$ and $U$.
\begin{Lemma}[{\cite[Lemmas 37 and 39]{HQ}}]
    \label{lemma: one red control}
    Fix $t>0$. There exist a constant $c_5>0$ such that for any $\mathpzc B_0, \zeta, U_0$ as before, we have 
    \begin{equation}
        \label{eq: one red control}
        \VT{Q_t^{\mu}- Q_t^{\mu^\prime}}^2 \leq c_5 \mathbb{E}\left[ \sum_{\substack{u \in V_{N,\kappa}\,,\\s \in \Xi^G_u \cap [0,t] }} \ind_{U_{s}=V_{s}\in B(u,m)} + \sum_{\substack{e \in E_{N,\kappa}\,,\\s \in \Xi^C_e \cap [0,t] }} \ind_{U_{s}=V_{s}\in B(e,0)}+\ind_{U_{t}=V_{t}\neq \dagger} \right]\, ,
    \end{equation}
where $U$ and $V$ are iid and follow the law $U$, conditionnally on $\widehat \Xi \cap [0,t]$. 
\end{Lemma}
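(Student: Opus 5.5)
The approach follows Tran's argument in \cite{HQ} (Lemmas 37 and 39) and runs through a chi-square/$L^2$ comparison of the two forward laws. The first step conditions on the background $\widehat \Xi\cap[0,t]$. By the representation through $\cX_t$, both $Q^\mu_t$ and $Q^{\mu'}_t$ are then images of product measures. The inputs are $\zeta$, the values revealed at the space-time points $(\tau_i,u_i)$ where a blue site is consumed by a Glauber or correlated exchange mark, and the blue values at time $t$. The only difference between the two laws is the content of the tracer site. Under $\mu$ it is $\delta_\sigma$. Under $\mu'$ it is an independent $\cB(\bar\rho)$, which is what a blue site would carry. Since the hesitating marks make $U$ random given $\widehat\Xi$, I write the law of $\xi_t$ under $\mu$ as a mixture over the tracer trajectory. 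The tracer is the unique location where the Dirac $\delta_\sigma$ replaces a Bernoulli. If the tracer is killed by a refresh mark, no information survives. Otherwise its value is read either at a Glauber or correlated mark hitting it, or at time $t$ if $U_t\neq\dagger$.

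Conditionally on $\widehat\Xi$, the second step writes the Radon--Nikodym density of the conditional law of $\xi_t$ under $\mu$ with respect to that under $\mu'$. I expect it to take the form
\[
\frac{dQ^\mu_t}{dQ^{\mu'}_t}=\mathbb E\Big[\prod \big(1+ g(\text{value at the point where } U \text{ is read})\big)\,\Big|\,\widehat\Xi,\xi_t\Big],
\]
where $g$ is the bounded centered ratio $\delta_\sigma/\cB(\bar\rho)-1$ and the product is $1$ if $U$ is killed by a refresh. I then use $\VT{\cdot}^2\le \chi^2$ together with Jensen, averaging over $\widehat\Xi$ at the end. Expanding the second moment requires introducing an independent copy $V$ of the tracer given $\widehat\Xi$. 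The terms where $U$ and $V$ read different independent Bernoulli variables have zero mean. The only surviving contributions come from events where $U$ and $V$ are read at the same random variable. This happens in three situations. First, both are located in the neighbourhood $B(u,m)$ of a Glauber mark at $u$ at the same time $s$. Second, both are at an endpoint of a correlated exchange mark. Third, both are at the same site at time $t$. Each such coincidence contributes at most $\|g\|_\infty^2\le c(\bar\rho)$, since $\bar\rho\in(0,1)$, and this gives $c_5$. Summing over these marks gives exactly the right-hand side of \eqref{eq: one red control}.

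I expect the main obstacle to be the rigorous justification of the expansion in step two: showing that the $\chi^2$ cross terms factorize once we condition on $\widehat\Xi$. The difficulty is that a Glauber mark reads all sites of $B(x,m)\times[\kappa]$, so the same revealed value can enter several later updates. In that case the function $\cX_t$ is not a simple coordinate projection. I would handle this as in \cite[Lemma 37]{HQ}, by passing to the enlarged product space containing all revealed variables $(\xi_{(\tau_i,u_i)})_i$ and the final blue values. On that space the two measures differ only through the position of the tracer, and the data-processing inequality for $\chi^2$ lets me drop $\cX_t$. After this reduction only the bookkeeping adaptation to the expanded graph remains. This means replacing single sites by the balls $B(u,m)$ for Glauber marks and by $B(e,0)$ for correlated exchanges.
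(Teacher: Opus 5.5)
Your proposal is correct and follows the same route as the paper, which invokes the perturbation-of-a-product-measure bound \cite[Proposition 3.2]{miller_peres} conditionally on $\widehat\Xi$ and defers to Tran's Lemmas 37 and 39. Your $\chi^2$ expansion with two conditionally i.i.d.\ tracers, in which only coincident read points survive and each contributes $\mathbb{E}_{\cB(\bar\rho)}[g^2]$, is exactly that proposition, and your data-processing step is how one passes from the read variables to $\xi_t=\cX_t(\cdot)$.

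One small precision: a correlated exchange mark on $e=\{(x,i),(x+1,j)\}$ reads the whole columns $\{x,x+1\}\times[\kappa]=B(e,0)$ through $\iota(\xi)_x,\iota(\xi)_{x+1}$. Your second coincidence case should therefore be $U_s=V_s\in B(e,0)$, not ``both at an endpoint''; this is what the stated right-hand side contains.
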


The proof of this lemma is based on the perturbation of a product measure lemma \cite[Proposition 3.2]{miller_peres} and is identical to the proof of Lemmas 37 and 39 in \cite{HQ}. The following lemma aims to control the right hand side of \eqref{eq: one red control}.
\begin{Lemma}
    \label{lemma: RHS- one red control}
    Let $(D_N)$ be  a sequence  such that $D_N^{2}=o(\beta_N)$ and $t>0$. There exist a constant $c_6(t)>0$ such that we have  
    \begin{equation}
        \label{eq: rhs- one red control}
      \begin{split} 
        \mathbb{E}&\left[ \sum_{\substack{u \in V_{N,\kappa}\,,\\s \in \Xi^G_u \cap [0,t] }} \ind_{U_{s}=V_{s}\in B(u,m)} + \sum_{\substack{e \in E_{N,\kappa}\,,\\s \in \Xi^C_e \cap [0,t] }} \ind_{U_{s}=V_{s}\in B(e,0)}+\ind_{U_{t}=V_{t}\neq \dagger} \right]\\ &\qquad\qquad\leq c_7 \frac{D_N^2}{\sqrt{\beta_N}}+ t
        \bbP_ {\mathpzc B_0}\left[ \exists s \in [0, t],  
        \max_{ u \in V_{N,\kappa}} \operatorname{dist}(u, \mathpzc B _s) \geq  D_N \right] \,,
      \end{split}
    \end{equation}
where $U$ and $V$ are iid and follow the law $U$, conditionnally on $\widehat \Xi \cap [0,t]$. 
\end{Lemma}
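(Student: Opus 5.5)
The plan is to follow the strategy of \cite[Lemma 40]{HQ}. We split according to whether the blue set stays dense. Let $\mathcal E_N$ be the event that for all $s\in[0,t]$ every $u\in V_{N,\kappa}$ lies within distance $D_N$ of $\mathpzc B_s$. On $\mathcal E_N^c$ we bound the integrand crudely. Conditionally on $\widehat\Xi$, the expected number of Glauber and correlated marks met by the pair $(U,V)$ on $[0,t]$ is at most a constant times $t$. The relevant rates are $\sum_j\lambda_j\,|B(0,m)|\kappa$ and $2\kappa\bar b$, all of order one. Together with the indicator $\ind_{U_t=V_t\neq\dagger}\le 1$, this produces the term $t\,\bbP_{\mathpzc B_0}[\mathcal E_N^c]$, up to absorbing constants. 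The precise form requires a Cauchy--Schwarz step, or noting that the mark counts are Poisson and independent of the hesitating Bernoulli variables.

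On $\mathcal E_N$ the goal is to show that two independent tracers rarely coincide. The key observation is as follows. Whenever $U_s$ sits next to a blue site, the hesitating marks of rate $2\beta_N$, with their fair coins, make $U$ perform a genuinely random step at rate $\beta_N$, independently of $V$. The exclusion marks act identically on $U$ and $V$, so they do not separate the tracers. The hesitating marks, however, randomize them independently. Hence the difference process behaves like a time-changed random walk whose independent-randomization clock runs at rate $\beta_N$ times the fraction of time spent adjacent to blue sites.

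Within distance $D_N$ of a blue site, the exclusion dynamics at rate $\beta_N$ brings $U$ to a blue neighbour in time of order $D_N^2/\beta_N$, by diffusive scaling on the expanded graph. So over any macroscopic time window, the tracer accumulates a local time near blue sites that is comparable, up to a factor $D_N^{-2}$, to the total time. We then estimate $\bbP(U_s=V_s\neq\dagger)$ through a heat-kernel bound for the independently randomized part. After effective time $\beta_N s/D_N^2$ of independent randomization, the collision probability is at most $C D_N/\sqrt{\beta_N s}$ on a one-dimensional torus, modulo the factor $\kappa$ and the size of the balls $B(u,m)$.

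Integrating in $s\in[0,t]$ against the mark rates, which are $O(1)$, gives a contribution of order $D_N^2/\sqrt{\beta_N}$ for the two sums. The same estimate at time $t$ handles $\ind_{U_t=V_t\neq\dagger}$. The small-$s$ singularity is integrable, since $\int_0^t s^{-1/2}\,ds<\infty$.

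The main obstacle is making the heat-kernel step rigorous, because the blue set $\mathpzc B_s$ is itself random and correlated with the exclusion marks. I would first condition on the full trajectory of $\mathpzc B$ and of the exclusion marks. I would then view $(U,V)$ as two walks driven by a common deterministic environment, perturbed by independent coin flips. The aim is to derive a Nash-type on-diagonal bound $\sup_{x}\bbP(U_s=x)\le C\,(1+\text{number of effective coin flips})^{-1/2}$, for instance via a martingale/local CLT argument on the coin-flip sequence. The final step is to show that the number of coin flips exceeds $c\,\beta_N s/D_N^2$ with overwhelming probability on $\mathcal E_N$, using a large-deviation bound on hitting times of blue neighbours.
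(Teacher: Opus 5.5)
Your proposal has a genuine gap, and it sits in the step you flag as the main obstacle. Condition on $\widehat\Xi$ and write $p_s(x)=\bbP(U_s=x\mid\widehat\Xi)$, so that $\bbP(U_s=V_s\neq\dagger\mid\widehat\Xi)=\sum_x p_s(x)^2$. The marks act on $p_s$ as follows:
\begin{itemize}
\item an exclusion mark transposes two entries of $p_s$;
\item a killing mark sends mass to $\dagger$;
\item a hesitating mark that can move the tracer across $\{a,b\}$ replaces $p_s(a),p_s(b)$ by their average, which lowers $\sum_x p_s(x)^2$ by $(p_s(a)-p_s(b))^2/2$.
\end{itemize}
Hence a Nash-type bound $\sup_x p_s(x)\le C(1+K)^{-1/2}$, with $K$ the number of coin flips, is false uniformly in the environment. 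If several hesitating marks hit the same edge $\{a,b\}$ while no other mark touches $a$ or $b$, then $K$ is large but $p_s$ stays equal to $(1/2,1/2)$ on $\{a,b\}$.

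A correct version would have to show that typical environments spread the flipped edges. That is a quenched local limit theorem for a walk in the dynamic random environment formed by the marks and the blue set, which is itself driven by the same marks. Your ``martingale/local CLT on the coin-flip sequence'' does not supply it. Given $\widehat\Xi$, exclusion moves are deterministic and a coin gives a step $0$ or $\pm1$ with nonzero conditional mean. Moreover, which hesitating marks are met later depends on earlier coins. So there is no sum of independent centred increments to work with.

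Your picture of the difference process is also misleading. Exclusion marks act identically on $U$ and $V$ only when $U=V$. At distinct sites the tracers use different marks, and after averaging over $\widehat\Xi$ they move like independent walks at full rate $\beta_N$. The only slowdown is a stickiness at distance zero, not a global time change.

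The paper stays annealed and exploits exactly this stickiness. First, the Campbell formula bounds the two sums over marks by a constant times $\int_0^t\bbP(U_s=V_s\neq\dagger)\,ds$. The term $\ind_{U_t=V_t\neq\dagger}$ is handled by monotonicity of $s\mapsto\bbP(U_s=V_s\neq\dagger\mid\widehat\Xi)$. The time spent at $\operatorname{dist}(U,V)=0$ is then decomposed into excursions.
\begin{itemize}
\item \emph{Sojourn length.} On the $D_N$-dense event each sojourn at zero has expected length $O(D_N^2/\beta_N)$. A third tracer $W$, started at a blue site within distance $2D_N$, becomes adjacent to $U$ within time $8D_N^2/\beta_N$ with probability bounded below. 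A hesitating mark on the edge joining $U_\tau$ and $W_\tau$, on which $U$ and $V$ draw different coins, then separates them with probability bounded below. Iterating gives a geometric number of trials.
\item \emph{Number of returns.} The number of returns to zero before $t$ is $O(\sqrt{\beta_N})$. This follows by coupling $\operatorname{dist}(U,V)$ with the distance of two independent rate-$\beta_N$ walks so that its excursions are longer, and applying the local CLT.
\end{itemize}
Multiplying the two bounds gives $D_N^2/\sqrt{\beta_N}$.

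Doing the Campbell reduction first also repairs your treatment of the non-dense event. The Glauber and correlated marks you count are the same ones that remove blue sites, so they are correlated with $\mathcal E_N$. Thus $\bbE[\#\text{marks}\cdot\ind_{\mathcal E_N^c}]$ is not directly $O(t\,\bbP(\mathcal E_N^c))$, and Cauchy--Schwarz would only give $\sqrt{\bbP(\mathcal E_N^c)}$. After the reduction it suffices to use, for each fixed $s$, $\bbP(U_s=V_s\neq\dagger)\le\bbP(\mathcal E_N^c)+\bbP(U_s=V_s\neq\dagger,\ \mathcal E_N)$.
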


The proof of this lemma is postponed to Section \ref{section: Proof of Propostion the control of one error and lcocalisation}.

\section{Proof of the theorems}

\label{section: Proof the theorems - Plan of the rest of the  chapter}

The purpose of this section is to prove Theorems \ref{thm: Lower bound general} and \ref{thm: Upper bound}. In this section, we fix $E = V_{N,\kappa}$ and construct the BEP up to a time $t>0$ that we fix later.

As mentioned earlier, we expect the two extremal configurations $\bot$ and $\top$ (defined in \eqref{eq: bot top}) to correspond to the worst-case scenarios. Therefore, we consider the process Glauber-Misanthrope initialised from these two particular configurations. Thanks to Lemma \ref{lemma: link expanded and misanthrope process}, we obtain 

\begin{align*}
    (\bot_ t(x))_{x \in \T_N}&\overset{\cL}{=}\iota(\vee_ t)= \left(\sum_{i=1}^\kappa \vee_ t(x,i)\right)_{x \in \T_N} ,\\
    (\top_ t(x))_{x \in \T_N}&\overset{\cL}{=}\iota(\wedge_ t)= \left(\sum_{i=1}^\kappa \wedge_ t(x,i)\right)_{x \in \T_N}.
\end{align*}
where $\vee$ and $\wedge$ constructed as in \eqref{eq: vee and wedge construction} and $\iota$ defined in \eqref{eq: def iota projection}. In particular, we have, thanks to this construction 
\begin{equation*}
    \mathpzc{r}_t = \operatorname{Card}(\sR(t)) = \sum_{x =1}^N \top_t(x) - \bot_t(x)\,.
\end{equation*}

In other words, one can see that if, for some $x \in \T_N$, we have $\bot(x)=0 \neq \top(x)=1$, then there exists $u=(x, i)$ such that its update history percolates all the way back to time $t$. Naturally, if $\mathpzc{r}_t=0$, the system has already reached equilibrium. However, in that situation, we would have waited too long. In fact, as mentioned before, it turns out that mixing takes place as soon as the expected number of red sites is of the same order as its typical equilibrium fluctuations. This clearly justifies the definition of  \eqref{eq: prediction of the mixing time} 
    \begin{equation*}
  T^N_{eq}(\eps)= \inf\left\{t\geq 0, \, \alpha^N(\top) - \alpha^N(\bot)  \leq {\eps}{\sqrt{N}}\right\}=\inf\left\{t\geq 0, \, \Esp{\mathpzc{r}_t}  \leq {\eps}{\sqrt{N}}\right\}\,.
    \end{equation*}
\subsection{Lower bound} The main idea is to use the total number of particles as a distinguishing statistic and the second moment method, similarly to \cite{HQ,Lubetzky_Sly,univ_ising_cutoff}. The key point is estimating the variance by controlling the two-point correlation function. 
\begin{Proposition}
    \label{prop: covariance estimate general}
    Assuming either (L1) or (L2). 
   There exists $c=c(\kappa , \bar{b}, c_+,c_-)>0$ such that for any $x \in \T_N$ and  for any $t>0$
    \begin{align*}
       \sum_{y \in \T_N}\left|\operatorname{Cov}(\bot_t(x),\bot_t(y))\right| \leq c
    \end{align*} 
    The same holds for $\top$.
\end{Proposition}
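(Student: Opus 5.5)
We work throughout with the BEP of Subsection~\ref{subsection: BEP construction} started from $E=\{x\}\times[\kappa]\cup\{y\}\times[\kappa]$ and run up to time $\ft=t$. By Lemma~\ref{lemma: link expanded and misanthrope process} and Lemma~\ref{lemma: link between bep and expanded},
\[
\bot_t(x)\overset{\cL}{=}\sum_{i}F_{(x,i),t}\big((0)_{w\in\fW_t}\big),
\]
and jointly in $(x,y)$. The key decoupling claim is that, conditionally on the event that the information clusters $\sC_{\{x\}\times[\kappa]}(t)$ and $\sC_{\{y\}\times[\kappa]}(t)$ are disjoint, the two contents are independent. This is the standard information-percolation argument of \cite{LS16,HQ}. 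The point is that one cannot simply condition on disjointness, since that conditioning destroys the Poissonian structure. Instead, as in \cite[Lemma~3.?]{univ_ising_cutoff} and \cite{HQ}, we couple the joint BEP with two independent BEPs started from $\{x\}\times[\kappa]$ and $\{y\}\times[\kappa]$, driven by independent copies of the marks. The coupling agrees with the joint BEP as long as the two explorations never meet, i.e.\ never create a second-class datum of one on a first-class datum of the other. Since all contents lie in $[0,\kappa]$, this yields
\[
\big|\operatorname{Cov}(\bot_t(x),\bot_t(y))\big|\le 2\kappa^2\,\bbP\big[\sC_{\{x\}\times[\kappa]}(t)\cap\sC_{\{y\}\times[\kappa]}(t)\neq\emptyset \text{ or the independent explorations intersect}\big].
\]
The proof then reduces to summing this probability over $y$.

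The intersection event forces some datum of the cluster of $x$ to come within distance $m$ (for a Glauber branching) or distance $1$ (for a correlated exchange) of a datum of the cluster of $y$. A union bound therefore gives
\[
\sum_y\bbP[\cdots]\le C(m)\,\Esp{\fn_{\{x\}\times[\kappa]}(t)}\cdot\sup_{z}\Esp{\fn_{\{z\}\times[\kappa]}(t)},
\]
or, more crudely, $\sum_y\bbP[\cdots]\le C\,\Esp{\fn_{\{x\}\times[\kappa]}(t)^2}$. Note that $\sum_y$ ranges over $N$ sites, but for fixed $x$ only those $y$ whose cluster reaches that of $x$ contribute. The remaining task is a \emph{uniform in $t$ and $N$} bound on the size of the information cluster. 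Exclusion marks only move data and create no new data, so they cost nothing. This is why $\beta_N$ disappears under (L2), and why the geometry of the torus matters only through $m$ and $\kappa$.

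To bound the cluster, we dominate its exploration by a subcritical branching process. Each first-class datum dies at rate at least $\upsilon_-$ through a refresh. It branches at rate at most $\upsilon_+$ through a Glauber clock into at most $(2m+1)\kappa$ new data, and through a correlated exchange mark (rate at most $2\kappa\bar b$ over the $2\kappa$ edges at each endpoint) into $2\kappa$ data. The cluster also includes data connected backwards: exploring $\sC$ requires, from each new collision point, re-exploring the history of another first-class datum. This accounts for the extra combinatorial factors $(4\kappa m+2\kappa-2)$ and $(4\kappa-2)$ in (L2), which count the number of neighbouring positions a collision can link to. Under (L2), the mean offspring per unit lifetime, $2(4\kappa-2)\kappa\bar b+(4\kappa m+2\kappa-2)(2m+1)\kappa\upsilon_+$, is strictly less than $\upsilon_-$. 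The dominating Galton--Watson process is then subcritical, and its total progeny has finite second moment uniformly in $t$.

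Under (L1) the counting is done only on the update history $\sH$. Here the diffusive scaling $\beta_N=N^2$ makes two independent histories meet only through an extra $\kappa(2m+1)$-type factor: fast mixing of exclusion means that collisions between distinct first-class data happen essentially at branching events only. This gives the weaker requirement $2(2\kappa-1)\kappa\bar b+(\kappa(2m+1)-1)\upsilon_+<\upsilon_-$.

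The main obstacle is this last step: rigorously dominating the \emph{cluster} (as opposed to the history) by a subcritical branching process. Second-class data follow first-class data, and backward links can chain. I would first follow the exposition of \cite[Section~3]{LS_exposition} and \cite{HQ}. The plan is to explore $\sC$ via a sequential revealing procedure in which each newly discovered first-class datum is charged to a branching event of an already discovered one. I would then verify that the number of chargeable targets is at most the constants appearing in (L1)/(L2). The same argument applies verbatim to $\top$, by replacing $0$ with $1$ in the arguments of $F_{w,t}$.
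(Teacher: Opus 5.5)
The genuine gap is the case \eqref{cond: trivial cluster size}. Your whole argument runs through the expected size of the information cluster. The rate condition in (L1) is weaker than \eqref{cond: condition general cluster size}, however, and only makes the \emph{history} exploration (the HEP, with increments $\kappa(2m+1)-1$ and $2\kappa-1$) subcritical. The cluster exploration process (CEP) can then be supercritical, e.g.\ for $\kappa=1$, $m=1$, $\bar b=0$ and $2\upsilon_+<\upsilon_-<12\upsilon_+$. So no bound on $\bbE[\fn]$ is available under (L1), and your last step cannot be run. Your sentence on (L1) does not replace it: second-class data are created only at branching events for every $\beta_N$, so that is not where the diffusive scaling acts.

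The paper's argument here is a per-pair estimate that never uses clusters. For fixed $x\neq y$ it couples the BEP started from $\{x,y\}\times[\kappa]$ with the spatially free HEP, identifying data until the first second-class datum is created. Failure requires some first-class datum to die by a Glauber or correlated exchange mark while another first-class datum is within distance $m$. By Lemma \ref{antioncentration lemma}, which is exactly where $\beta_N=N^2$ enters, each such pair contributes $O(1/N)$. Lemma \ref{lemma: success of the coupling} then gives failure probability at most $\frac{c}{N}\bbE\big[\widetilde{\fh}_E(E,t)^2\big]=O(1/N)$, using Lemma \ref{lemma: trivial cluster size}. On the success event the contents are HEP contents, which are independent across the two columns. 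After recentring with $|\alpha^N_t(\bot)-\alpha_0(t)|\le c/N$, one gets $|\operatorname{Cov}(\bot_t(x),\bot_t(y))|\le C/N$ for each $y\neq x$. The sum over $y$ is $O(1)$ because of this factor $1/N$, not because of any cluster bound.

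Under \eqref{cond: condition general cluster size} your route is the paper's, but two steps need repair. First, the spatial union bound producing $C(m)\,\bbE[\fn_x]\sup_z\bbE[\fn_z]$ is unjustified for general $\beta_N$. Data are not localised, and without anti-concentration there is no $1/N$ per pair to absorb the $N$ terms. The correct step is the linear one your remark about contributing $y$'s points to. Failure of the decoupling means the history of $(x,i)$ shares a first-class datum with the history of $A=\{y\}\times[\kappa]$, i.e.\ $(x,i)\in\sC_A$. Hence $\sum_x\sum_{i,j}\bbP\big((x,i)\in\sC_A\big)\le\kappa\,\bbE|\sC_A|$; the paper sums over $x$ with $y$ fixed, which is equivalent by symmetry of the covariance.

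Second, the domination of $\sC_A$, which you leave as a plan, is the heart of this case. The paper builds the CEP as follows:
\begin{itemize}
\item each datum carries Glauber clocks $(z,j,q)$ for all $z\in B(0,m)$ and $j\in[\kappa]$, so that a neighbour's update reading this datum is seen, which gives the factor $(2m+1)\kappa$;
\item each branching spawns data in both time directions, which gives the increments $4\kappa m+2\kappa-2$ and $4\kappa-2$;
\item Lemma \ref{lemma: CEP coupling success} constructs an injection $\sC_A\hookrightarrow\widetilde{\cC}(\infty)$ by synchronising unrevealed BEP marks with CEP clocks and ignoring revealed ones.
\end{itemize}
Lemma \ref{lemma: information cluster control} then bounds $\bbE|\widetilde{\cC}(\infty)|$. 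Your numerology matches, but some such construction is needed to account for the forward links.
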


\begin{proof}[Proof of Theorem \ref{thm: Lower bound general}]

    We have for any $t>0$, 
    $$
    \bbE\left({\sum_{x=1}^N \bot_t(x)}\right)= N \alpha_t^N(\bot)  \qquad \text{and} \qquad \bbE\left({\sum_{x=1}^N \top_t(x)}\right)= N \alpha_t^N(\top)\,.
    $$
    By attractivity of the process, we have
     $$
    \bbE\left({\sum_{x=1}^N \bot_t(x)}\right) \leq \bbE_{\iota_{\rm eq}}\left({\sum_{x=1}^N \eta(x)}\right)\leq  \bbE\left({\sum_{x=1}^N \top_t(x)}\right) \,.
    $$ 
    Therefore, 
    $$
    \max\left\{\bbE\left({\sum_{x=1}^N \bot_t(x)}\right) - \bbE_{\iota_{\rm eq}}\left({\sum_{x=1}^N \eta(x)}\right),   \bbE\left({\sum_{x=1}^N \top_t(x)} \right)  - \bbE_{\iota_{\rm eq}}\left({\sum_{x=1}^N \eta(x)}\right) \right\} \leq \Esp{\mathpzc{r}_t}\,.
    $$
    On the other hand, Proposition \ref{prop: covariance estimate general} implies:
    \begin{equation*}
    \operatorname{Var}\left[\sum_{x=1}^N \bot_t(x) \right] \leq C N, \quad \operatorname{Var}\left[\sum_{x=1}^N \top_t(x) \right] \leq C N \quad \text{and} \quad \operatorname{Var}_{\iota_{\rm eq}}\left[\sum_{x=1}^N \eta(x) \right]\leq C N \,.
   \end{equation*}

   Therefore, we can deduce by the second moment method \cite[Proposition 7.9]{LP}:
   \begin{align*}
    \mathrm{d}_{\mathrm{TV}}\left(\mathbb{P}^ \bot_t, \iota_{\rm eq}\right) 
    &\geq 1-8 \frac{\max \left\{\operatorname{Var}_{\iota_{\rm eq}}\left[\sum_{x=1}^N \eta(x) \right] , \operatorname{Var}\left[\sum_{x=1}^N \bot_t(x) \right]\right\}}{\Esp{\mathpzc{r}_t}^2} \, ,\\
    & \geq 1- C\frac{N}{\Esp{\mathpzc{r}_t}^2}\, .
   \end{align*}
Similarly, we obtain the same bound for $\top$.  For any $\theta \in(0,1)$ and $t\leq T^N_{\rm eq}(1) - \frac{1}{2c_1} \log \left(\frac{C}{1-\theta}\right)$, we obtain thanks to Lemma \ref{lemma: control of REDs exp control} 
\begin{align*}
    \max\left[\mathrm{d}_{\mathrm{TV}}\left(\mathbb{P}^ \bot_t, \iota_{\rm eq}\right) ,\mathrm{d}_{\mathrm{TV}}\left(\mathbb{P}^ \top_t, \iota_{\rm eq}\right) \right]
    &\geq \theta\,.
   \end{align*} 
   Thus,  \eqref{cutoff of the predictive time} implies Theorem \ref{thm: Lower bound general}.
   
\end{proof}

We end this subsection by outlining the main ideas behind the proof strategy for Proposition \ref{prop: covariance estimate general}. Fix two distinct points $x \neq y$. Under assumption \eqref{cond: trivial cluster size} or \eqref{cond: condition general cluster size}, we expect that the correlation created between $\eta(x)$ and $\eta(y)$ remains limited. In other words, we aim to show that they belong to two separate information clusters. Consequently, it is crucial to control either
\begin{itemize}
\item Both the update history of $x$ and $y$. Then, we need to guarantee with high probability that there is no first-class datum in common. 
\item The size of the whole cluster information. 
\end{itemize} 
To this end, it will be convenient to introduce an auxiliary idealised branching processes that respectively either
\begin{itemize}
    \item explores the history while ignoring any correlations backwards in time, or
    \item explores the cluster in both directions, backwards and forward in time.
\end{itemize}
Constructing such processes is the purpose of subsections \ref{subsec: idealised branching process} and \ref{subsec: idealised branching process2}.   In Subsections \ref{subsection: Density profile and correlation control} and \ref{subsec Two point correlation control}, we establish Proposition \ref{prop: covariance estimate general} assuming respectively \eqref{cond: trivial cluster size} or \eqref{cond: condition general cluster size}. 

\subsection{Upper bound} 
In this section, we prove Theorem \ref{thm: Upper bound}. Let us start by observing that for any $\eta_0,\eta_0^\prime \in \{0,1\}^{\T_N}$
\begin{equation*}
    \VT{P_t^{\eta_0}- P_t^{\eta_0^\prime}} = \VT{Q_t^{\xi_
    0}\circ \iota^{-1}- Q_t^{\eta_0^\prime}\circ \iota^{-1}} \leq \VT{Q_t^{\xi_
    0}- Q_t^{\xi_0^\prime}}
\end{equation*}
where $Q_t^{\xi}$ is the law of the expanded process started from a configuration $\xi$ and where $$ \forall x \in \T_N, \,\, \forall i \in[\kappa]\,,\quad \xi_0(x, i)= \ind_{i \leq \eta(x)}\quad \text{and} \quad \xi_0^\prime(x, i)= \ind_{i \leq \eta^\prime(x)}\,.$$
Therefore, it is sufficient to study the mixing of the expanded process, relying on the ideas of Tran \cite[Theorem 1]{HQ}. Given $\eps>0$, let us now fix, 
$$
\ft = T^N_{\rm eq}(1)  + \eps \log(N)  \quad ; \quad t= 1\, .
$$
Let $c_3,c_4$ be the constants in Lemma \ref{lemma: there is a minimum of well positionned Zs} for $a=4$, and we choose $D_N= c_3 \log(N)$.\\

First, we construct the BEP with $E= V_{N,\kappa}$ until time $\ft>0$. Conditionnally on $\cF_\ft^h$ (defined in Subsection \ref{subsection: RGB}), we consider two expanded process $(\zeta_s)_{s \geq 0}$ (resp. $(\zeta^\prime_s)_{s \geq 0}$) started at $(\zeta_{u, \ft}(\xi_0))$ (resp. $((\zeta_{u, \ft}(\xi^\prime_0))$).  Therefore, by the  Markov property of the expanded process and Lemma \ref{lemma: link between bep and expanded}, we have for any $s\geq 0$
$$
\zeta_s \, \, \overset{\cL}{=} \, \, \xi_{\ft + s} \quad \text{and} \quad \zeta^\prime_s \, \, \overset{\cL}{=} \, \, \xi^\prime_{\ft + s}\,,$$
where $(\xi_{s})_{s \ge 0}$ (resp. $(\xi^\prime_{s})_{s \ge 0}$) is an expanded process started at $\xi_0$ (resp. $\xi^\prime_0)$. We have
\begin{align*}
    \VT{Q_{t+\ft}^{\xi_ 0}- Q_{t+\ft}^{\xi_0^\prime}} &= \max_A\left|Q_{t+\ft}^{\xi_ 0}(A)- Q_{t+\ft}^{\xi_0^\prime}(A)\right|\\
    &= \max_A \bbE\left[Q_{t}^{(\zeta_{u, \ft}(\xi_0))}(A)- Q_{t}^{(\zeta_{u, \ft}(\xi^\prime_0))}(A)\right]\\
    &\leq \bbE\left[ \VT{Q_{t}^{(\zeta_{u, \ft}(\xi_0))}- Q_{t}^{(\zeta_{u, \ft}(\xi^\prime_0))}}\right]
 \end{align*}
By a direct induction and Lemmas \ref{lemma: RGB conditionnally on the sigma algebra}, \ref{lemma: one red control} and \ref{lemma: RHS- one red control}, we obtain\footnote{We used the notation $\lesssim$ to hide universal constants.}
 \begin{align*}
    \VT{Q_{t+\ft}^{\xi_ 0}- Q_{t+\ft}^{\xi_0^\prime}} &\lesssim 2  \bbE\left[ \mathpzc r _ \ft \left(\frac{D_N^2}{\sqrt{\beta_N}}+  \bbP_{\sB(\ft)} \left[ \exists s \in [0, t],  
        \max_{ u \in V_{N,\kappa}} \operatorname{dist}(u, \mathpzc B _s) \geq  D_N \right] \right) ^{\frac 1 2}\right]\\
    &\lesssim  \sqrt{\frac{D_N^2}{\sqrt{\beta_N}}} \Esp{\mathpzc r _ \ft } +N \bbE\left[   \bbP_{\sB(\ft)} \left[ \exists s \in [0, t],  
        \max_{ u \in V_{N,\kappa}} \operatorname{dist}(u, \mathpzc B _s) \geq  D_N \right] \right]^{\frac 1 2}
 \end{align*}
where we used Cauchy-Schwarz in the last inequality.  \\

By using the homogeneity of Poisson point processes, we observe that the law of the process $\cB$,  $(\mathpzc B_s)_{s \in [0,t]}$ started with $\mathpzc B_0 = \sB(\ft)$ has the same law as $(\sB^\prime(\fs))_{\fs \in [\ft, \ft +t]}$ resulting from the construction of a BEP until $\ft +t$. Therefore, we have 
\begin{align*}
    \bbE\left[   \bbP_{\sB(\ft)} \left[ \exists s \in [0, t],  
        \max_{ u \in V_{N,\kappa}} \operatorname{dist}(u, \mathpzc B _s) \geq  D_N \right] \right] =     \bbP\left[ \exists \fs \in [\ft, \ft + t],  
        \max_{ u \in V_{N,\kappa}} \operatorname{dist}(u, \sB^\prime(\fs)) \geq  D_N \right]
\end{align*}
which implies by Lemma \ref{lemma: there is a minimum of well positionned Zs}
\begin{align*}
    \VT{P_{t+\ft}^{\eta_0}- P_{t+\ft}^{\eta_0^\prime}}  &\leq  \VT{Q_{t+\ft}^{\xi_ 0}- Q_{t+\ft}^{\xi_0^\prime}}\lesssim \sqrt{\frac{D_N^2}{\sqrt{\beta_N}}} \Esp{\mathpzc r _ \ft } +N \sqrt{\frac{1}{N^4}}
 \end{align*}
 Finally, we have by Lemma \ref{lemma: control of REDs exp control} 
 $$
  \Esp{\mathpzc r _ \ft }  \leq {}{\sqrt{N}} N^{-c_2 \varepsilon }\,.
 $$
The assumption $\beta_N=N^2$  ensures that $ \VT{P_{t+\ft}^{\eta_0}- P_{t+\ft}^{\eta_0^\prime}}$ converge to zero and implies therefore the result. \qed

\section{Density profile and correlation control}
\label{section: Density profile and correlation control}

{In this section, we derive quantitative estimates on the update history and the information clusters, which provide the key ingredients for the analysis of the density profile and the two-point correlation function.

We first introduce in Subsection \ref{subsec: idealised branching process} an idealised branching process, called the \emph{History Exploration Process} (HEP), to control the size of the update history. This follows the general strategy of \cite{de_masi_ferrari_lebowitz} and \cite{HQ}. Nevertheless, the Misanthrope dynamics requires an adaptation of the exploration procedure. Once the adapted exploration process is defined, the estimates on the density profile and the update history are derived in Subsections \ref{subsection: Density profile and correlation control} and \ref{subsec: Coupling the HEP with BEP}.

We then introduce in Subsection \ref{subsec: idealised branching process2} a second idealised branching process, called the \emph{Cluster Exploration Process} (CEP), to control the information clusters relying on the information percolation framework of \cite{univ_ising_cutoff}. Our construction differs from that of Lubetzky and Sly by exploring the entire cluster that it explores all branches of the cluster simultaneously through a Markovian dynamic, rather than revealing it one branch at a time.  The size of the cluster is then controlled through a birth-and-death argument.}

\subsection{A first idealised branching process}
\label{subsec: idealised branching process}

Fix  $E $ be a subset of $\cW$ (defined in \eqref{eq-def: the set of words}) where \footnote{The condition \eqref{eq: condition injectivity of labels} ensures the injectivity of the labels.}
\begin{equation}
    \label{eq: condition injectivity of labels}
\forall w, w^\prime \in \cW,\qquad w\nsucc w^\prime.
\end{equation}
Let us introduce an analogue construction to the BEP that no longer evolves on $V_{N,\kappa}$:
\begin{enumerate}
    \item At time $0$, for any $w \in E$, we put a datum labeled as $w$. Each datum has independent Exponential clocks as follows:
    \begin{itemize}
        \item {\bf Glauber clocks}: $q_-$ independent exponential clocks of intensities $\lambda_{-1},.., \lambda_{-q_-}$ and $q_+$ independent exponential clocks of intensities $\lambda_{1},..,\lambda_{q_+}$. 
        \item {\bf Refresh clocks}: One exponential clock  of rate $\upsilon_-$ and we associate to it a random variable $\mathfrak{c} \sim \cB(\bar{\rho}).$  
        \item {\bf Correlated Left exchange clocks and Correlated Right Exchange clocks}: $2\kappa$  exponential clock on of rate $\bar b$ and we associate to it a random variable $v \sim \cU([0,1])$.  
        For convenience, we label them $(\pm,j)$ with $j\in[\kappa]$ and this is associated with exchanging with the left/right jumps and the $j$th box.  
    \end{itemize}
    \item For any $w$, each time one of these clocks rings, the following then occurs:
    \begin{itemize}
        \item If it is a Refresh mark, the datum $w$ is killed. 
        \item If it is a Glauber clock, we kill $w$, and we create $2\kappa(2m+1)$ new data on all the sites labelled $wu$  where $u \in B(0,m)$.
        \item If it is a correlated left jump marked with $v$, we kill $w$  and we create new data $2\kappa$ children labeled $wu$ with $u\in \{-1,0\}\times [\kappa]$. Same for the right jumps.
    \end{itemize}
\end{enumerate}

The History Exploration process (HEP) started from $E$ refers to the set of all marks of all the data that lived between $0$ and $\ft$. We will denote it by $(\widetilde{ \fC}_{\ft}(E))_{\ft \geq 0}$. 

Analogously, we say that the datum $w$ is alive at time $\ft$ if it was born before time $\ft$ and has not been killed by time $\ft$. For any $\fs \leq \ft$, we define $\widetilde {\fD}_{\fs}(E)$ as the set of alive data at time $\fs$. {Once again, we omit the dependence in $E$ in the notation when it is clear from the context.}\\

\paragraph{\emph{Reveal of the content.}}
Let us fix $\ft>0$. Conditionally on $\widetilde {\fC}_{\ft}$, given $\widetilde{\sigma} \in \{0,1\}^{\widetilde {\fD}_{\ft}}$ the content of each alive data in $\widetilde {\fD}_{\ft}$, we can deterministically reveal as before the content of all data that lived between $0$ and $\ft$ according to the same rules as the BEP process. Therefore, conditionnally on the IBP, for any data $w$ alive at some point in the time interval $[0,\ft]$, there exists an \emph{non-decreasing} function $\widetilde F_{w,\ft}$ on $\{0,1\}^{\widetilde{\fD}_{\ft}}$ such that for any spin configuration $\widetilde{\sigma}$ on $\widetilde{\fD}_{\ft}$ satisfies
$$
\widetilde{\sigma}_{ w }=\widetilde{F}_{ w , {\ft}}\left(\left(\widetilde{\sigma}_{ w ^{\prime}}\right)_{{w}^{\prime} \in \fW_{\ft}}\right) .
$$

Let us emphasise that $(\widetilde{\fD}, (\widetilde{F}_u)_{u\in E})$ is a Markov process on 
$$
\bigcup_{\substack{\fW \subset \cW\\ \operatorname{Card}(\fW)< \infty}} \{\fW\}\times {(\{0,1\}^E)}^{\{0,1\}^\fW}
$$
with a compatible generator.

\paragraph{\emph{The idealised update history:}} We also define the update history for any $A \subset  \bigcup_{\fs\in [0,\ft]} \widetilde{\fD}_\fs$ as:
\begin{equation}
    \label{eq-def: idealised history update}
    \widetilde{\mathscr{H}}_{A}(E,\ft)= \left\{ w^\prime \in \bigcup_{\fs\in [0,\ft]} \widetilde{\fD}_\fs; \text{ such that there exist \,} w \in A, \,  w^\prime \succ w \right\} \,.
\end{equation}
and we denote by $\widetilde{\fh}_{A}(E,\ft)$ its cardinal. We once again omit the dependence in $E$, when it is clear from the context.

\begin{Lemma}[Density profile for HEP]
    \label{lemma: Density profile for HEP}
    For any $\ft>0$, $o\in \{0,1\}$  and $uV_{N,\kappa}$, we have:
    \begin{equation}
        \alpha_o(\ft)= \kappa \bbE^{\rm HEP}_{\{u\}}\left[\widetilde{F}_{u, \ft}((o)_{w^\prime \in \fW_{\ft} })\right]
    \end{equation}
    where  $\alpha_0$ (resp. $\alpha_1$) is the solution of equation \eqref{Eq:reactdiff} initialised at $0$ (resp. $\kappa$).
\end{Lemma}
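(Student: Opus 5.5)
The plan is to derive a closed equation for the one-datum quantity
$p_o(\ft):=\bbE^{\rm HEP}_{\{u\}}\big[\widetilde F_{u,\ft}((o)_{w'\in\widetilde\fD_\ft})\big]$
and check that $\kappa p_o$ solves \eqref{Eq:reactdiff} with initial value $\kappa o$. Uniqueness then follows because $R$ is a polynomial in $\alpha$, hence Lipschitz on $[0,\kappa]$.

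\textbf{Preliminary facts.} First I check that the HEP is non-explosive. Each datum dies at total rate $\Lambda:=\upsilon_-+\sum_{j\neq 0}\lambda_j+2\kappa\bar b$ and produces a bounded number of children, so the number of alive data is dominated by a Yule process. Next, the HEP lives on labels rather than on $V_{N,\kappa}$, and every newborn datum gets fresh independent clocks. By the branching property, the subtrees rooted at distinct children are therefore i.i.d. copies of the HEP started from a single datum. Consequently, if a child is born at backward time $\fs$, its revealed content is a Bernoulli$(p_o(\ft-\fs))$ variable, and the contents of distinct children are independent.

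\textbf{Renewal equation.} I decompose according to the first clock of the root, which rings at time $\fs\sim\mathrm{Exp}(\Lambda)$. This gives
\[
p_o(\ft)=e^{-\Lambda \ft}\,o+\int_0^{\ft}\Lambda e^{-\Lambda \fs}\,\Phi\big(p_o(\ft-\fs)\big)\,\dd\fs .
\]
Here $\Lambda\Phi(p)$ is the sum over clock types of the rate times the expected revealed content, when the children carry i.i.d. Bernoulli$(p)$ contents. Differentiating gives $p_o'=-\Lambda p_o+\Lambda\Phi(p_o)$ with $p_o(0)=o$.

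\textbf{Computing $\Lambda\Phi(p)-\Lambda p$.} Let $\xi$ be i.i.d. Bernoulli$(p)$ on the children boxes, and let $\eta=\iota(\xi)$, so that $\eta\sim\cB(\kappa,p)^{\otimes}$ locally. I treat the three clock types separately.
\begin{itemize}
\item \emph{Refresh clock.} It contributes $\upsilon_-\bar\rho-\upsilon_-p$, which equals $\lambda_{q_++1}(1-p)-\lambda_{-q_--1}p$.
\item \emph{Glauber clocks.} A type $j>0$ clock sets the content to $1$ when $f_j=1$, and otherwise leaves the own box's content unchanged. Its net contribution is therefore $\lambda_j\bbE[(1-\xi_{(0,i)})f_j(\eta)]$, where I use that the own box $(0,i)$ is among the children. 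Symmetrically, a type $j<0$ clock contributes $-\lambda_j\bbE[\xi_{(0,i)}f_j(\eta)]$.
\item \emph{Reaction term.} Summing these Glauber contributions together with the refresh term, and using \eqref{eq: decomposition of c}, gives $\bbE[(1-\xi_{(0,i)})c_+(\eta)-\xi_{(0,i)}c_-(\eta)]$. Summing over the $\kappa$ boxes at site $0$ turns this into exactly $R(\kappa p)$, so the per-box quantity is $R(\kappa p)/\kappa$.
\item \emph{Correlated exchange clocks.} Consider an exchange with box $v$ at $x+1$, with $V$ uniform. The content of $u$ changes by $+1$ at effective rate $\widetilde b(\eta_{x+1},\eta_x)\ind_{\xi_u=0,\xi_v=1}$. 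It changes by $-1$ at rate $\widetilde b(\eta_x,\eta_{x+1})\ind_{\xi_u=1,\xi_v=0}$. The law of $\xi$ is invariant under swapping the two sites, so these two expectations are equal and the net contribution vanishes. The same holds for left exchanges.
\end{itemize}
Hence $p_o'=R(\kappa p_o)/\kappa$, and $\alpha=\kappa p_o$ solves $\partt\alpha=R(\alpha)$ with $\alpha(0)=\kappa o$.

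\textbf{Main obstacle.} The delicate step is justifying that the revealed content in the HEP reproduces the per-box rates of the expanded generator $\cA_{\rm Exp}$, namely the shifted rates $c_\pm-\lambda_{\pm(q_\pm+1)}$ plus refresh. It also requires that the children set $B(0,m)\times[\kappa]$ contains the datum's own box, so that "no change when $f_j=0$" is correctly encoded. Equally, the exact cancellation of the correlated exchange term relies on the children at $x$ and $x+1$ being i.i.d. This is precisely where the idealised HEP, in which correlations are ignored, differs from the BEP. I would verify both points carefully against the reveal rules.
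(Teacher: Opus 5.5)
Your proposal is correct and follows essentially the paper's argument: a first-step (backward Kolmogorov) decomposition that uses the fact that the children's revealed contents are i.i.d.\ Bernoulli with the current one-datum mean, followed by a term-by-term computation in which the refresh and Glauber clocks produce $R(\kappa p)/\kappa$ and the correlated exchanges cancel. Your renewal equation is just the integrated form of the paper's Kolmogorov equation, and your single-edge swap symmetry is an equivalent, arguably cleaner, substitute for the paper's use of the identity $\bbE[Z_1 \mid \sum_j Z_j]=\sum_j Z_j/\kappa$ in the exchange term.
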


\begin{proof}
    Given  $o \in \{0,1\}$ and $u=(x,i)$. Let us denote by:
            \begin{equation}
            \rho_o(u,\ft)=  \bbE^{\rm HEP}_{\{u\}}\left[\widetilde{F}_{u, \ft}((o)_{w^\prime \in \fW_{\ft} })\right]
        \end{equation}
    Notice that it the law of $\widetilde{F}_{u, \ft}((o)_{w^\prime \in \fW_{\ft} })$ does not depend on the initial label $u$. Therefore, $\rho_o(u,\ft)$ is constant in $u$, and we omit $u$ in the expression from now on.
    
    If we start the HEP at $E_u = \{uv\}_ {v \in B(0,m)}$ for a given $y\in \T_N$, e observe that $(\widetilde{\sigma}_{uv})_{v \in B(0,m)}$ are i.i.d and follow a Bernoulli distribution with parameter $\rho_o(\ft)$.
    For convenience, we define $$\iota_{u(k,\cdot)}=\sum_{j=1}^\kappa\widetilde{\sigma}_{u\cdot(y,j)}\,,$$
    for $y \in \llbracket -m,m \rrbracket$.
    We also recall that if $(Z_i)_{i=1}^\kappa \sim \otimes_{j=1}^\kappa \cB(\rho)$ then
    \begin{equation}
        \label{eq: conditionnal bernoulli-sum}
         \sum_{j=1}^\kappa Z_j \sim \cB(\kappa,\rho) \qquad \text{and} \qquad \Esp{Z_1 \left| \sum_{j=1}^\kappa Z_i \right.} =\frac{\sum_{j=1}^\kappa Z_l }{\kappa}\,.
    \end{equation}
    
    Thanks to the Backward Kolmogorov equation for the Markovian process  $(\widetilde{\fD}, (\widetilde{F}_u)_{u\in E})$, we obtain:
    $$
    \partt  \rho_o(\ft) =    M_{L, \ft} +    M_{R, \ft} +R_{\ft} + G_{\ft}^- +G_{\ft}^+ \,,
    $$
    where:
    \begin{itemize}
        \item $M_L$ corresponds to the correlated  left exchange terms:
        \begin{align*}
        M_{L,\ft} &= \sum_{j=1}^\kappa \bbE^{\rm HEP}_{E_u}\left[\tilde{b}\left(\iota_{u(-1,\cdot)},\iota_{u(0,\cdot)}\right) \widetilde{\sigma}_{u(-1,j)} \right]+\bbE^{\rm HEP}_{E_u}\left[ (1-\tilde{b}\left(\iota_{u(-1,\cdot)},\iota_{u(0,\cdot)}\right)\widetilde{\sigma}_{u(-1,j)} ) \widetilde{\sigma}_{u(0,i)}\right] - \rho_0(\ft) \\
        & + \bbE^{\rm HEP}_{E_u}\left[\tilde{b}\left(\iota_{u(0,\cdot)},\iota_{u(1,\cdot)}\right) \widetilde{\sigma}_{u(1,j)}\widetilde{\sigma}_{u(0,i)} \right]+ \bbE^{\rm HEP}_{E_u}\left[ (1-\tilde{b}\left(\iota_{u(0,\cdot)},\iota_{u(1,\cdot)}\widetilde{\sigma}_{u(0,i)}\right)) \widetilde{\sigma}_{u(0,i)}\right] - \rho_0(\ft)\\
        & = \sum_{j=1}^\kappa  \bbE^{\rm HEP}_{E_u}\left[\tilde{b}\left(\iota_{u(-1,\cdot)},\iota_{u(0,\cdot)}\right) \frac{\iota_{u(-1,j)}}{\kappa} \right]
        -\bbE^{\rm HEP}_{E_u}\left[ \tilde{b}\left(\iota_{u(-1,\cdot)},\iota_{u(0,\cdot)}\right)\frac{\iota_{u(-1,j)}}{\kappa} \frac{\iota_{u(0,i)}}{\kappa}\right] \\
        & +\bbE^{\rm HEP}_{E_u}\left[\tilde{b}\left(\iota_{u(0,\cdot)},\iota_{u(1,\cdot)}\right) \frac{\iota_{u(1,j)}}{\kappa}\frac{\iota_{u(0,i)}}{\kappa}\right]- \bbE^{\rm HEP}_{E_u}\left[ \tilde{b}\left(\iota_{u(0,\cdot)},\iota_{u(1,\cdot)}\right)\frac{\iota_{u(0,i)}}{\kappa} \right]\\
        &  =0\,.
        \end{align*} 
        Indeed, after the left exchange directed towards the datum $u(0, i)$, its content is one either because the jump succeeded and $\widetilde{\sigma}_{u(-1,j)}$ was equal to one or the jump failed and $\widetilde{\sigma}_{u(0, i)}$ was equal to one. Similarly, for the left exchange out of the datum $u(0, i)$, its content is one either because even though the jump succeeded and $\widetilde{\sigma}_{u(1,j)}$ was equal to one or the jump failed and $\widetilde{\sigma}_{u(0, i)}$ was equal to one. In the second equality, we used \eqref{eq: conditionnal bernoulli-sum}.
        \item $M_R$ corresponds to the correlated right exchange marks, and an analogous computation shows that it is equal to zero.
        \item $R$ corresponds to the refresh terms:
        \begin{align*}
            R_t= \upsilon_- (\bar{\rho} - {\rho_o(\ft)}) \,.
        \end{align*}
        \item $G^-$ corresponds to the Glauber clocks that may kill a datum:
        \begin{align*}
            G_t^-&= \sum_{j=-q_-}^{-1} \lambda_j \left\{  \bbE^{\rm HEP}_{E_u}\left[(1-f_j)\left(\left(\iota_{u(k,\cdot)}\right)_{k \in B(0,m)}\right) \widetilde{\sigma}_{u(0,i)}\right] - {\rho_o(\ft)} \right\} \\
            &= - \sum_{j=-q_-}^{-1} \lambda_j  \bbE^{\rm HEP}_{E_u}\left[f_j\left(\left(\iota_{u(k,\cdot)}\right)_{k \in B(0,m)}\right) \frac{\iota_{u(0,\cdot)}}{\kappa}\right]\,.
        \end{align*}
        Indeed, after a Glauber mark for $j \in \llbracket -q_-,-1\rrbracket $, the content is equal to one only if it failed and the content $\widetilde{\sigma}_{u(0, i)}$ was already equal to one.
        \item Similarly, $G^+$ corresponds to the Glauber clocks that may add a datum:
        \begin{align*}
            G_t^+
            &= \sum_{j=1}^{q_ +} \lambda_j \left\{  \bbE^{\rm HEP}_{E_u}\left[f_j\left(\left(\iota_{u(k,\cdot)}\right)_{k \in B(0,m)}\right)  + \left(1-f_j\left(\left(\iota_{u(k,\cdot)}\right)_{k \in B(0,m)}\right)\right) \widetilde{\sigma}_{u(0,i)} \right] - {\rho_o(\ft)} \right\} \\
            &=\sum_{j=1}^{q_ +} \lambda_j  \bbE^{\rm HEP}_{E_u}\left[  f_j\left(\left(\iota_{u(k,\cdot)}\right)_{k \in B(0,m)}\right) \frac{\kappa-\iota_{u(0,\cdot)}}{\kappa}\right] \,.
        \end{align*}
    \end{itemize}
   Therefore, we obtain by \eqref{eq: decomposition of c}
$$
\kappa \partt \rho_o(\ft)= R(\kappa \rho_o(\ft))\,,
$$
where $R$ is defined in \eqref{eq: reaction term}.
\end{proof}

Let us give a first control on the history set.
\begin{Lemma}[Control of the HEP]
    \label{lemma: trivial cluster size}
    Assuming \eqref{cond: trivial cluster size},
     there exists $\mu= \mu(\kappa , \bar{b}, c_+,c_-)>0$ and $\gamma=\gamma(\kappa , \bar{b}, c_+,c_-)>0$, such that for any $E \subset V_{N,\kappa}$ and $\ft>0$:
$$
\mathbb{E}_E^{\rm HEP}\left[e ^{\lambda \widetilde{\fh}_{E}(E,\ft)}\right]< \exp(\gamma|E|)\,.
$$
\end{Lemma}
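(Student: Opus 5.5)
The plan is to exploit the fact that the HEP is a genuine branching process: distinct data evolve independently and share no clocks. Hence $\widetilde{\fh}_E(E,\ft)$ is a sum of $|E|$ independent copies of $\widetilde{\fh}_{\{u\}}(\{u\},\ft)$, and the law of each copy does not depend on the label $u$. It therefore suffices to find $\lambda>0$ and $\gamma<\infty$ such that $\phi_\ft(\lambda):=\bbE^{\rm HEP}_{\{u\}}\bigl[e^{\lambda \widetilde{\fh}_{u}(\ft)}\bigr]\le e^{\gamma}$ uniformly in $\ft>0$. The bound for general $E$ then follows by independence: $\bbE_E\bigl[e^{\lambda\widetilde{\fh}_E}\bigr]=\phi_\ft(\lambda)^{|E|}\le e^{\gamma|E|}$.

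Since $\widetilde{\fh}_u(\ft)$ is non-decreasing in $\ft$, monotone convergence reduces the problem to the total progeny $\widetilde{\fh}_u(\infty)$ of the underlying Galton--Watson tree. The embedded generation structure comes from the first ring among the clocks of a datum, whose total rate is $\Lambda=\upsilon_-+\upsilon_+ +2\kappa\bar b$. Each outcome produces a fixed number of children:
\begin{itemize}
\item a refresh ring, with probability $\upsilon_-/\Lambda$, produces $0$ children;
\item a Glauber ring, with probability $\upsilon_+/\Lambda$, produces $\kappa(2m+1)$ children;
\item a correlated exchange ring, with probability $2\kappa\bar b/\Lambda$, produces $2\kappa$ children.
\end{itemize}
Here the Glauber rate is taken as the sum $\sum_{j\neq \pm(q_\pm+1)}\lambda_j=\upsilon_+$. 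If one counts descendants excluding the parent datum, the offspring numbers become $\kappa(2m+1)-1$ and $2\kappa-1$, consistently with (L1). The mean offspring is
\[
\mathfrak m=\frac{\upsilon_+\kappa(2m+1)+2\kappa\bar b\cdot 2\kappa}{\Lambda}.
\]
The first part of (L1) is $\beta_N=N^2$. The second part, $2(2\kappa-1)\kappa\bar b+(\kappa(2m+1)-1)\upsilon_+<\upsilon_-$, is exactly the statement that the expected net increase of alive data per event, $\upsilon_+(\kappa(2m+1)-1)+2\kappa\bar b(2\kappa-1)$, is strictly smaller than the death rate $\upsilon_-$. Equivalently $\mathfrak m<1$, so the tree is strictly subcritical.

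The core step is the classical fact that a subcritical Galton--Watson tree with bounded offspring has total progeny $T$ with an exponential moment. Let $g(s)=\bbE[s^{X}]$ denote the offspring generating function. The generating function $h(s)=\bbE[s^T]$ then solves $h(s)=s\,g(h(s))$. Because $g'(1)=\mathfrak m<1$ and $X$ is bounded, the implicit function theorem shows that this fixed-point equation has a solution $h$ analytic on a neighbourhood of $s=1$, and this solution is the minimal one. In particular $h(e^{\lambda})<\infty$ for some $\lambda>0$, which gives the desired constant $\gamma=\log h(e^\lambda)$.

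A more hands-on route, closer to the continuous-time structure and useful later, is a supermartingale argument. One considers $M_\ft=\exp\bigl(\lambda\,\#\{\text{data born by }\ft\}+\theta\,|\widetilde{\fD}_\ft|\bigr)$. A generator computation shows that $M$ is a supermartingale for small $\lambda>0$ and a suitable $\theta>0$, since the drift of $|\widetilde{\fD}_\ft|$ is negative by (L1). Optional stopping then yields the uniform bound.

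The main obstacle I expect is the bookkeeping that matches the rates in the HEP with the constants in (L1). This means checking which clocks exist, namely $2\kappa$ correlated clocks of rate $\bar b$ each and Glauber rate $\upsilon_+$, and whether the killed parent is counted, so that the subcriticality condition comes out exactly as stated. Choosing $\lambda$ small enough in terms of the gap $\upsilon_--[\dots]>0$ is then routine. The dependence of $\mu,\gamma$ only on $(\kappa,\bar b,c_\pm)$ is automatic, since the offspring law depends only on these quantities and not on $N$, $E$ or $\ft$.
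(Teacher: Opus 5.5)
Your proof is correct, but your main route is not the paper's. The paper never passes to the embedded Galton--Watson tree and never invokes independence across the initial data. Instead it treats $(\widetilde{\fh}_E(\ft),\widetilde{\fd}_{\ft})$ directly as a continuous-time linear birth--death chain, with jumps $(0,-1)$ at rate $\upsilon_-\widetilde{\fd}_{\ft}$, $(\kappa(2m+1),\kappa(2m+1)-1)$ at rate $\upsilon_+\widetilde{\fd}_{\ft}$, and $(2\kappa,2\kappa-1)$ at rate $2\bar b\kappa\widetilde{\fd}_{\ft}$. It then checks from the forward equation that $\bbE[e^{\mu\widetilde{\fh}+\lambda\widetilde{\fd}}]$ is non-increasing for small $\mu,\lambda$, which gives $\gamma=\lambda+\mu$. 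This is precisely the supermartingale you mention as a fallback.

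Your generating-function argument has two advantages. It is conceptually clearer: subcriticality $\mathfrak m<1$ is literally the second half of (L1), and the bound is manifestly uniform in $\ft$ via $\widetilde{\fh}_u(\ft)\le\widetilde{\fh}_u(\infty)$. It also avoids differentiating an exponential moment that is not known a priori to be finite, which the paper's computation tacitly does; that needs a localization plus Fatou step, the same one your ``optional stopping'' remark needs. The paper's route, in turn, gives explicit constants and is reused verbatim for the CEP in the information-cluster lemma.

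One step you should spell out. An analytic solution of $h=s\,g(h)$ near $s=1$ does not by itself show $\bbE[s^T]<\infty$ for $s>1$. Here is how to close it. For $s>1$ close to $1$, take a fixed point $y^*(s)>1$ of $y\mapsto s\,g(y)$. It exists by the intermediate value theorem, since $s\,g(1)-1=s-1>0$, while $s\,g(1+\epsilon)-(1+\epsilon)<0$ for a fixed small $\epsilon$ and $s$ close to $1$, because $g'(1)=\mathfrak m<1$. Then $y^*=s\,g(y^*)\ge s$. Let $T_n$ be the progeny of the first $n$ generations, so $\bbE[s^{T_0}]=s\le y^*$. Since $g$ is non-decreasing, induction gives $\bbE[s^{T_{n+1}}]=s\,g(\bbE[s^{T_n}])\le y^*(s)$. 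Monotone convergence then yields $\bbE[s^T]\le y^*(s)<\infty$.
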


\begin{proof} We denote by $\widetilde \fd_\ft$ the cardinal of $\widetilde{\fD}_{\ft}$.
We observe that $\left(\widetilde{\fd}_{\ft}\right)_ {t \geq 0}$ is a classical Birth and Death process: 
\begin{itemize}
    \item with rate $\upsilon_- \widetilde{\fd}_{\ft} $, it decreases by $1$
    \item with rate $\upsilon_+ \widetilde{\fd}_{\ft} $, $\widetilde{\fd}_{\ft} $ increases by $ \kappa (2m+1) -1 $ and $\widetilde{\fh}_{E}(\ft)$ increases by  $ \kappa (2m+1) $,
    \item  with rate $ 2\bar{b}\kappa\widetilde{\fd}_{\ft} $, $\widetilde{\fd}_{\ft} $  increases by $2 \kappa -1$ and $\widetilde{\fh}_{E}(\ft)$ increases by  $2 \kappa $.
\end{itemize} 
Therefore, we obtain by the Kolmogorov forward equation for any $\lambda, \mu>0$
\begin{align*}
    &\partt \mathbb{E}\left[e ^{\mu \widetilde{\fh}_{E}(\ft)+\lambda \widetilde{\fd}_{\ft}}\right] \\
    &=   \bbE\left[ \upsilon_+  \widetilde{\fd}_{\ft} \left[ e^{\mu \widetilde{\fh}_{E}(\ft)+\lambda \widetilde{\fd}_{\ft} + \kappa (2m+1)(\mu+\lambda) - \lambda}  - e^{\mu \widetilde{\fh}_{E}(\ft)+\lambda \widetilde{\fd}_{\ft}}\right]\right]\\
    & +\bbE\left[2\bar b \kappa
   \widetilde{\fd}_{\ft} \left[ e^{\mu \widetilde{\fh}_{E}(\ft)+\lambda \widetilde{\fd}_{\ft} +2\kappa (\mu+\lambda) - \lambda}  - e^{\mu \widetilde{\fh}_{E}(\ft)+\lambda \widetilde{\fd}_{\ft}}\right]\right]  \\
    &+\bbE\left[  \upsilon_- \widetilde{\fd}_{\ft}\left[ e^{\mu \widetilde{\fh}_{E}(\ft)+\lambda \widetilde{\fd}_{\ft} - \lambda}  - e^{\mu \widetilde{\fh}_{E}(\ft)+\lambda \widetilde{\fd}_{\ft}}\right]\right] \\
    &=  \bbE\left[ \widetilde{\fd}_{\ft} e^{\lambda \widetilde{\fh}_{E}(\ft)+2\kappa \lambda}
     \left[\upsilon_+ (e^{\kappa (2m+1)(\mu+\lambda)  - \lambda}  - 1) +  2\bar b \kappa (e^{2\kappa (\mu+\lambda)  -\lambda}  - 1)  + \upsilon_- (e^{- \lambda}  - 1) \right]\right] 
\end{align*}
 If $\lambda$ and $\mu$ are sufficiently small,  \eqref{cond: trivial cluster size}implies that $\partt \mathbb{E}\left[e ^{\mu \widetilde{\fh}_{E}(\ft)+\lambda \widetilde{\fd}_{\ft}}\right] \leq 0 $. Consequently, $ \mathbb{E}\left[e ^{\mu \widetilde{\fh}_{E}(\ft)}\right] \leq e^{(\lambda+ \mu)|E|} $. 
\end{proof}

\subsection{Coupling the HEP with BEP}
\label{subsec: Coupling the HEP with BEP}
In this subsection, we assume that $\beta_N=N^2$. Let us start by introducing a coupling between BEP and HEP, starting from $E$, a subset of $V_{N,\kappa} $, by identifying data with the same label as long as there is no second-class data. More precisely, for any living data $w$ at position $u=(x,i)$:
\begin{enumerate}
    \item We identify both BEP and HEP Glauber and Refresh clocks. 
    \item Regarding the exchange clocks, we identify the BEP-correlated exchange clocks associated with edge $\{(x\pm 1,j),(x, i)\} $ with the HEP's correlated exchange associated with data $w$ labelled $(\pm,j)$, for any $j\in [\kappa]$.
    \item Each time $w$ encounters an exclusion mark, the correlated exchange clocks are updated to respect the previous choice. 
\end{enumerate}
The coupling fails before $t$ if there exists second-class data born before time $t$. Thanks to assumption \eqref{cond: trivial cluster size}, the coupling is indeed successful with high probability, as we shall prove it below. 
\begin{Lemma}   
    \label{lemma: success of the coupling}
    Given $E \subset V_{N,\kappa}$ and assuming \eqref{cond: trivial cluster size}, there exists a constant $C>0$, such that for any $\ft\geq 0$,
    $$
    \bbP_E\left[ \exists \fs \leq \ft, \qquad \fD_s \neq \fW_s \right] \leq \frac{C}{N} \,,
    $$
    where $\bbP_E$ is the probability associated to the coupling started at $E$.
\end{Lemma}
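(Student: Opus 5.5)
The plan is to compare the BEP with the HEP up to the first time the coupling fails. Before that time the two processes agree datum by datum. A second-class datum is created exactly when a branching event, namely a Glauber clock or a correlated exchange mark, places a child on a site of $V_{N,\kappa}$ that is already occupied by a living first-class datum. It is therefore enough to bound the expected number of such ``collision'' events, which we will do by a union bound over pairs of data in the history. The available ingredients are Lemma~\ref{lemma: trivial cluster size}, which gives exponential moments of $\widetilde{\fh}_E(E,\ft)$ uniformly in $\ft$, and the diffusive speed $\beta_N=N^2$ of the exclusion marks, which spreads the data over the torus in unit backward time.

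First, we bound the total number of data that ever appear. On the coupling event this is $\widetilde{\fh}_E(E,\ft)$, and by Lemma~\ref{lemma: trivial cluster size} its exponential moments are bounded uniformly in $\ft$; in particular $\bbE[\widetilde{\fh}^2]\le C(|E|)$. We treat $E$ as fixed, of bounded size such as $\{u\}$ or $\{u,v\}$, which is the case needed for the covariance. Second, we fix two data $w\neq w'$ that are alive simultaneously in the HEP. Conditionally on their birth times, their positions in the BEP follow exclusion marks at rate $\beta_N=N^2$ on $\bbG_{N,\kappa}$. Their projections to $\T_N$ are therefore random walks, moving jointly as an exclusion pair, with diffusion constant of order $N^2$. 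A collision requires a branching event of $w$ at rate $O(1)$, placing a child within distance $m$ or $1$ of the position of $w'$. For a walk of speed $N^2$ on $\T_N$, the expected time that two walkers spend within bounded distance during a time interval of length $T$ is $O(T/N)$ after an initial period of length $O(N^{-2})$. The initial period can be chosen short because children are born within bounded distance of their parent.

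The main obstacle is precisely this pair of data born adjacent to each other, since siblings are created at distance at most $m$. Their time spent close together is not $O(1/N)$ per unit time. It is of order the local time at the origin of a rate-$N^2$ one-dimensional walk over a time $O(1)$, which is $O(1/N)$ in expectation, because that local time scales like $\sqrt{T}/N$. We would make this precise with the heat-kernel bound $p_s(x,y)\le C(1/N+ (N\sqrt{s})^{-1})$ on $\T_N$, integrated over the remaining lifetime of the pair. That lifetime has an exponential tail, uniformly in the history, because every datum dies at rate at least $\upsilon_-$. Summing over pairs, the expected number of collisions is bounded by
\[
\frac{C}{N}\,\bbE\Big[\sum_{w,w'}\int_0^{\infty}\big(1+s^{-1/2}\big)\ind_{w,w'\text{ alive at } s}\,\dd s\Big]\le \frac{C'}{N}\,\bbE[\widetilde{\fh}^2],
\]
and this is at most $C/N$ by the first step.

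Finally, by Markov's inequality the probability of at least one collision before $\ft$ is at most $C/N$, uniformly in $\ft$, which is the claim. Some care is needed because the exclusion walks of distinct data are not independent: two data can swap across a shared edge. We would handle this by working with the exact two-particle exclusion dynamics, which for the difference process is a walk with a bounded perturbation near the origin and obeys the same heat-kernel bound. We would also need to check that the HEP branching times are independent of the exclusion marks under the coupling, which holds by construction.
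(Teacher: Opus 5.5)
Your proposal is correct and follows essentially the paper's route: a union bound over pairs of first-class data that coexist in the history before the first failure, a diffusive anticoncentration estimate (from $\beta_N=N^2$) giving $O(1/N)$ per pair uniformly in the starting positions, and the bound $\bbE[\widetilde{\fh}^2]<\infty$ from Lemma \ref{lemma: trivial cluster size}. The only cosmetic difference is that you integrate the heat-kernel bound against the $O(1)$ branching intensity. The paper instead evaluates the pair's distance at their first death time, an exponential time, and invokes Lemma \ref{antioncentration lemma}, which amounts to the same integrated heat-kernel estimate.
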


\begin{proof}[Proof of Lemma \ref{lemma: success of the coupling} ]
    We work on the event where the coupling fails
    $$\mathscr{F}_{\ft} =\left\{ \exists {\fs} \leq \ft, \qquad \fD_{\fs} \neq \fW_{\fs} \right\}\,. $$
    We introduce the first time a second-class particle appears by 
    $$\tau= \inf\left\{ \fs \leq \ft, \, \text{such that, }\qquad \fD_{\fs} \neq \fW_{\fs} \right\} < \ft \,.$$ 
    By construction of the processes, $\tau$ is the first death time of some first-class datum, either due to correlated exchange or Glauber mark.  At this death, there is a creation of a second-class datum due to the presence of another first-class datum in the $m$ neighbourhood.\\
    We will denote by $w^{\star \star}$ and $ w^\star$ the two first-class data involved.  Without loss of generality, $w^{\star\star}$ is the firstborn of the two data. \\
    
    We denote $T^b_\star$ (resp. $T^b_{\star \star}$) the birth time of these data. Similarly, we denote by $T^d_\star$ (resp. $T^d_{\star \star}$) the death time of these data. From this discussion, we can write:
    \begin{align*}
        \Prob{\mathscr{F}_t} 
        &\leq \bbE_E\left[\sum_{w^\star \in \mathscr{H}_{\tau ^-}} \sum_{w^{\star \star}\in \fW_{{T^{b}_\star}}}  \mathds{1}_{\tau=\min[{T^d_\star ,T^d_{\star\star} }]\leq t} \mathds{1}_{\operatorname{dist}\left(\ell_{\min[{T^d_\star ,T^d_{\star\star} }]}(w^{\star}), \ell_{\min[{T^d_\star ,T^d_{\star\star} }]}(w^{\star \star })\right)\leq m} \right]\\
        &\leq \sum_{w^\star \in\cW} \bbE_E\left[ \mathds{1}_{{T^{b}_\star}\leq \tau}\mathds{1}_{{T^{b}_\star} \leq \ft} \sum_{w^{\star \star}\in \fW_{{T^{b}_\star}}} \mathds{1}_{\operatorname{dist}\left(\ell_{\min[{T^d_\star ,T^d_{\star\star} }]}(w^{\star}), \ell_{\min[{T^d_\star ,T^d_{\star\star} }]}(w^{\star \star })\right)\leq m} \right]\,.
    \end{align*}
    By taking the conditionnal expectation with respect to $\cF_{T^b_\star}$, we have:
    \begin{align*}
        \Prob{\mathscr{F}_t} \leq \sum_{w^\star \in\cW} \bbE_E\left[ \mathds{1}_{{T^{b}_\star}\leq \tau}\mathds{1}_{{T^{b}_\star} \leq t} \sum_{w^{\star \star}\in \fW_{{T^{b}_\star}}} \bbE_E\left[\left. \mathds{1}_{\operatorname{dist}\left(\ell_{\min[{T^d_\star ,T^d_{\star\star} }]}(w^{\star}), \ell_{\min[{T^d_\star ,T^d_{\star\star} }]}(w^{\star \star })\right)\leq m}\right|\cF_{T^b_\star}\right] \right]\,.
    \end{align*}

    Notice that $\min[{T^d_\star ,T^d_{\star\star} }]-T^b_\star $ is an Exponential random variable with parameter $2 \bar b + 2 \upsilon_+$.

    Moreover, conditionnally on $\cF_{T^b_\star}$, $\ell(w^{\star})$ and $\ell(w^{\star \star})$ are the occupied positions of a two-particle exclusion process on the expanded graph $\bbG_{N,\kappa}$ until $\min[{T^d_\star ,T^d_{\star\star} }]$. Thanks to Lemma \ref{antioncentration lemma}, there exist a universal constant $c>0$ such that 
    \begin{equation*}
        \Prob{{\operatorname{dist}\left(\ell_{\min[{T^d_\star ,T^d_{\star\star} }]}(w^{\star}), \ell_{\min[{T^d_\star ,T^d_{\star\star} }]}(w^{\star \star })\right)\leq m}\rvert\cF_{T^b_\star} } \leq \frac{c}{N}\,, 
    \end{equation*}
    where we used $\beta_N= N^2$.
    Therefore, we obtain:
    \begin{align*}
        \Prob{\mathscr{F}_t} 
        &\leq \sum_{w^\star \in\cW} \bbE_E\left[ \mathds{1}_{{T^{b}_\star}\leq \tau}\mathds{1}_{{T^{b}_\star} \leq t} \sum_{w^{\star \star}\in \fW_{{T^{b}_\star}}} \frac{c}{N} \right]\\
        &\leq \frac{c}{N} \bbE\left[ |\mathscr{H}_{\min(\tau^-,t)}|^2 \right] \leq \frac{c}{N} \bbE\left[ |\widetilde{\mathscr{H}}_{\min(\tau^-,t)}|^2 \right] \leq \frac{c}{N} \bbE\left[ |\widetilde{\mathscr{H}}_{t}|^2 \right] 
    \end{align*}
    where we used that $\mathscr{H}_{\min(\tau^-,t)}=  \widetilde{\mathscr{H}}_{\min(\tau^-,t)}$. The result follows from Lemma \ref{lemma: trivial cluster size}.
\end{proof}
\begin{remark}
     The anti-concentration argument (Lemma \ref{antioncentration lemma}) is used because the exact locations of the data are unknown, and we need to control their positions to avoid the creation of correlation. The diffusive scaling $\beta_N= N^2$  ensures that the positions of the two data  are ``close to equilibrium ''.\\
     We also emphasise that this condition is required for this method to estimate the discrete profile. In fact, this coupling yields a replacement lemma in which we compare the local dynamics with an equilibrium configuration of the partial exclusion process.\\
    Consequently, it is clear that the exclusion component must be of the correct order for it to work. If one is only interested in bounding correlations, it is sufficient to control the size of the entire information cluster (see Definition \ref{def: information cluster}). {This leads to the statement under assumption \eqref{cond: condition general cluster size} given in Theorem \ref{thm: Lower bound general}.}
\end{remark}
\subsection{Density profile and correlation control}
\label{subsection: Density profile and correlation control} 
In this subsection, we still assume $\beta_N=N^2$, and we aim to prove Proposition \ref{prop: covariance estimate general} and Proposition \ref{prop: expression of predictive time} under assumption \eqref{cond: trivial cluster size} . Let us first recall the expression of the reaction term:
 \begin{equation*}
    R (\alpha) = \mathbb{E}_{\cB\left(\kappa,\frac{\alpha}{\kappa}\right)}\left[ (\kappa-\eta(0)) c_+(\eta) - \eta(0) c_-(\eta)\right].
 \end{equation*}
where $R$ is defined in \eqref{eq: reaction term}.
For $o \in\{0,1\}$,For $o \in \{0,1\}$, we denote by $(\alpha_o(t))_{t\ge0}$ the solution started from $\alpha_o(0)=o\kappa$ to 
 \begin{equation}
    \partt \alpha_o= R(\alpha_o)
 \end{equation}
Under the assumption \eqref{cond: trivial cluster size}, the following lemma holds. 
\begin{Lemma}
    \label{lemma: technical lemma on R}
    There exists $\alpha_\star \in[0,\kappa], C,\tilde C>0$ such that for $o\in \{0,1\}$ and $t \geq 0$:
    $$C\ee^{R^\prime(\alpha_{\star}) t} \leq |\alpha_o(t)- \alpha_\star| \leq \tilde C\ee^{R^\prime(\alpha_{\star}) t}\,,$$
    {where $R^\prime(\alpha_{\star}) <0$.} 
\end{Lemma}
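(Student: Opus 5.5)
We reduce the statement to a one-dimensional ODE fact: a $C^1$ vector field $R$ on $[0,\kappa]$ with a unique zero $\alpha_\star$ in the interior and $R'(\alpha_\star)<0$. The proof has three steps.

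\textbf{Step 1: boundary signs.} Since $c_+\ge \lambda_{q_++1}>0$ and $c_->0$, we have
\begin{equation*}
R(0)=\mathbb{E}_{\cB(\kappa,0)}\left[\kappa c_+\right]>0 \qquad\text{and}\qquad R(\kappa)=-\kappa\,\mathbb{E}_{\cB(\kappa,1)}\left[c_-\right]<0 .
\end{equation*}
The function $R$ is a polynomial in $\alpha$, because it is an expectation of a local function under a product of binomials. Hence $R$ has at least one zero in $(0,\kappa)$, and $[0,\kappa]$ is invariant for the flow.

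\textbf{Step 2: a uniform slope bound.} We show that $R'(\alpha)<0$ for every $\alpha\in[0,\kappa]$. This gives uniqueness of $\alpha_\star$ and $R'(\alpha_\star)<0$ at once. Use the decomposition \eqref{eq: decomposition of c} and write $p=\alpha/\kappa$, so that
\begin{equation*}
R(\alpha)=\kappa\lambda_{q_++1}(1-p)-\kappa\lambda_{-q_--1}p+\sum_{j\neq q_++1,\,-q_--1}\pm\lambda_j\,\mathbb{E}_p\!\left[(\kappa-\eta(0))^{\pm}f_j(\eta)\right],
\end{equation*}
with the obvious signs and weights $\kappa-\eta(0)$ or $\eta(0)$. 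The deterministic part contributes exactly $-\upsilon_-$ to $\frac{\mathrm d}{\mathrm d\alpha}$.

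For a remaining term, $\frac{\mathrm d}{\mathrm dp}\mathbb{E}_p[g]$ with $g$ depending on the $(2m+1)\kappa$ Bernoulli coordinates of the expanded picture equals $\sum_k \mathbb{E}_p[\partial_k g]$, by the Russo-type formula for product Bernoulli measures. Each $|\partial_k g|$ is bounded by the weight: at most $1$ per expanded coordinate after dividing by $\kappa$. This bounds the derivative contribution of the Glauber part by $((2m+1)\kappa-1)\upsilon_+$ up to the bookkeeping of the weight, which is the expected origin of the factor in \eqref{cond: trivial cluster size}. One can equivalently read this off the HEP computation of Lemma \ref{lemma: Density profile for HEP}, by linearising the generator in $\rho$.

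Condition \eqref{cond: trivial cluster size} then gives
\begin{equation*}
R'\le -\upsilon_-+((2m+1)\kappa-1)\upsilon_+<0 .
\end{equation*}
The $\bar b$ terms cancel here, as shown in Lemma \ref{lemma: Density profile for HEP}. I expect this step to be the main obstacle: the derivative bookkeeping must be made to match the constant, and the monotonicity of the $f_j$ must be used to sign the partial derivatives.

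\textbf{Step 3: two-sided exponential bounds.} Let $\delta(t)=\alpha_o(t)-\alpha_\star$. Then
\begin{equation*}
\delta'=R(\alpha_o)=\delta\, g(\alpha_o),\qquad g(\alpha)=\frac{R(\alpha)}{\alpha-\alpha_\star}\in[-K,-c]\quad\text{with } c>0,
\end{equation*}
and $g$ is smooth, with $g(\alpha_\star)=R'(\alpha_\star)$. Consequently $\delta$ keeps its sign, $\delta(0)\neq0$, and $|\delta(t)|\le \kappa e^{-ct}$. Writing
\begin{equation*}
\log\frac{|\delta(t)|}{|\delta(0)|}=R'(\alpha_\star)\,t+\int_0^t\left(g(\alpha_o(s))-g(\alpha_\star)\right)\mathrm ds,
\end{equation*}
the integrand is bounded by $L|\delta(s)|\le L\kappa e^{-cs}$, which is integrable. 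So the integral is bounded uniformly in $t$, and we obtain constants $C,\tilde C$ that also work for both $o\in\{0,1\}$, since $|\delta(0)|\in\{\alpha_\star,\kappa-\alpha_\star\}$ is bounded away from $0$.
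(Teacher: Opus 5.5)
Your proof is correct and follows the paper's route. The paper obtains $R'<0$ on $[0,\kappa]$ from the same computation as in the proof of Lemma~\ref{lemma: control of REDs exp control} (together with attractivity), deduces a unique zero, and applies Gronwall to $\partial_t\log|\alpha_o-\alpha_\star|$. Your Steps 1 and 3 supply two points the paper leaves implicit: the existence of the zero via the boundary signs, and the integrability of $g(\alpha_o(s))-g(\alpha_\star)$. Without the latter, convergence of the log-derivative alone would only give exponents $R'(\alpha_\star)\pm\varepsilon$, not constant-factor bounds.

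The bookkeeping you flag in Step 2 closes as follows. Write $\kappa-\eta(0)=\sum_i(1-\xi(0,i))$ and $\eta(0)=\sum_i\xi(0,i)$, and treat each $i$ separately.

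\begin{itemize}
\item In the $i$-th summand, the derivative in the coordinate $(0,i)$ equals $-f_j(\iota\xi^{(0,i),0})\le 0$ for creation terms, resp.\ $-f_j(\iota\xi^{(0,i),1})\le 0$ for annihilation terms. This term must be dropped by sign, not bounded in absolute value; bounding it by $1$ would leave the factor $(2m+1)\kappa$, which (L1) does not control.
\item Each of the other $(2m+1)\kappa-1$ coordinates contributes at most $1$, simply because $f_j\in\{0,1\}$.
\end{itemize}

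Summing over $i$ and dividing by $\kappa$ gives exactly $R'\le-\upsilon_-+((2m+1)\kappa-1)\upsilon_+$. This uses no monotonicity of the $f_j$. Also, $R$ contains no $\tilde b$ terms at all, so there is nothing there to cancel.
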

The proof of this technical lemma is postponed to Appendix \ref{section-appendix: control of Red}. We are now in shape to prove Proposition \ref{prop: expression of predictive time}.
\begin{proof}[Proof of Proposition \ref{prop: expression of predictive time}]
    For any $x \in \T_N$,  we choose $E$ as $\{(x,j)\}_{j=1}^\kappa$. We denote by $\mathscr{S}$ the event of success up to time $t >0$. On $\mathscr{S}$, we have 
    $$ \forall u \in E;\qquad \vee_\ft(u) =  {F}_{u, \ft}((0)_{w^\prime \in \fW_{\ft}})=\widetilde{F}_{u, \ft}((0)_{w^\prime \in \fW_{\ft}})=  \widetilde{\sigma}_{u}\, ,$$
    where $\vee_\ft(u) $ is contructed in \eqref{eq: vee and wedge construction}.
    Thanks to Lemma \ref{lemma: link between bep and expanded} and Lemma \ref{lemma: link expanded and misanthrope process}, we obtain
    \begin{align*}
        \alpha_\ft^N(\bot) &= \sum_{j=1}^{\kappa}\Esp{\vee_{(0,j)}(\ft)}\leq \sum_{j=1}^{\kappa}\Esp{\vee_{(0,j)}(\ft) \mathds{1}_{\mathscr{S}}} +\kappa \Prob{\mathscr{S}}
        \\& \leq  \sum_{j=1}^{\kappa}\Esp{\widetilde{\sigma}_{(0,j)}(\ft)} +\kappa \Prob{\mathscr{S}}
        \\&= \alpha_0(\ft)+\kappa \Prob{\mathscr{S}}.
    \end{align*}
    where we used in the last equality Lemma \ref{lemma: Density profile for HEP}.
    By Lemma \ref{lemma: success of the coupling}, we have
    \begin{equation}
        \label{eq: step 1 density control}
        |\alpha_\ft^N(\bot) - \alpha_0(\ft)| \leq \frac{c}{N}.
    \end{equation}
    Hence, by Lemma \ref{lemma: technical lemma on R}
    \begin{equation}
       C\ee^{R^\prime(\alpha_{\star}) \ft} - \frac{c}{N} \le \left|\alpha^N_\ft(\bot)-\alpha_\star \right| \le  \tilde C \ee^{R^\prime(\alpha_{\star}) \ft} + \frac{c}{N}.
    \end{equation}
    Similarly, we prove the same statement for $\wedge$.
\end{proof}

Let us now prove Proposition \ref{prop: covariance estimate general} under assumption \eqref{cond: trivial cluster size}
\begin{proof}[Proof of Proposition \ref{prop: covariance estimate general} ]
    We denote by $\rho_0(t)= \frac{\alpha_0(t)}{\kappa}$.
    Given $x,y \in \T_N$, it is sufficient to prove 
    $$\Esp{(\bot_x(\ft)-\kappa \rho_0(\ft))(\bot_y(\ft)- \kappa \rho_0(\ft))}\leq \frac{c}{N} \,$$
    thanks to \eqref{eq: step 1 density control}.
    We choose $E = \{(x,j)\}_{j=1}^\kappa \cup \{(y,j)\}_{j=1}^\kappa $. We denote by $\mathscr{S}$ the event on which the coupling defined in Lemma \ref{lemma: success of the coupling} succeed up to time $\ft >0$, i.e
  $$  \mathscr{S}_{\ft} =\left\{ \forall {\fs} \leq \ft, \qquad \fD_{\fs} = \fW_{\fs} \right\}.$$
     On the event $\mathscr{S}$, $$ \forall u \in  E;\qquad \vee_\ft({u}) =  {F}_{u, \ft}((0)_{w^\prime \in \fW_{\ft}})=\widetilde{F}_{u, \ft}((0)_{w^\prime \in \fW_{\ft}})=  \widetilde{\sigma}_{u}\,.$$
    Therefore, we obtain thanks to Lemma \ref{lemma: link between bep and expanded} and Lemma \ref{lemma: link expanded and misanthrope process}:
    \begin{align*}
        &\Esp{(\bot_x(\ft)-\kappa \rho_0(\ft))(\bot_y(\ft)- \kappa \rho_0(\ft))} \\
        &\qquad = \sum_{j=1}^\kappa \sum_{i=1}^\kappa \Esp{(\vee_{(x,i)}(\ft)-\rho_0(\ft))(\vee_{(y,j)}(\ft)-\rho_0(\ft))} \\
        &\qquad  = \sum_{j=1}^\kappa \sum_{i=1}^\kappa \Esp{(\vee_{(x,i)}(\ft)-\rho_0(\ft))(\vee_{(y,j)}(\ft)-\rho_0(\ft)) \mathds{1}_{\mathscr{S}}} +\kappa^2 \Prob{\mathscr{S}}\\
        &\qquad \leq  \sum_{j=1}^\kappa \sum_{i=1}^\kappa \Esp{(\widetilde{\sigma}_{(x,i)}(\ft)-\rho_0(\ft))(\widetilde{\sigma}_{(y,j)}(\ft)-\rho_0(\ft))} +\kappa^2 \Prob{\mathscr{S}}
    \end{align*}
    Since $\widetilde{\sigma}_{(x, i)}$ and $\widetilde{\sigma}_{(y,j)}$ are independent, we have by
    $$
    \Esp{(\widetilde{\sigma}_{(x,i)}(\ft)-\rho_0(\ft))(\widetilde{\sigma}_{(y,j)}(\ft)-\rho_0(\ft))}= \Esp{\widetilde{\sigma}_{(x,i)}(\ft)-\rho_0(\ft)}\Esp{\widetilde{\sigma}_{(y,j)}(\ft)-\rho_0(\ft)}=0 \,
    $$
    where we used Lemma \ref{lemma: Density profile for HEP}. The claim follows from Lemmas \ref{lemma: success of the coupling}.
\end{proof}
\subsection{A second idealised process}
\label{subsec: idealised branching process2}
{In the remainder of this section, we no longer assume $\beta_N=N^2$. In contrast with the update history, the information cluster defined in Definition \ref{def: information cluster} collects all data that may share the same descendants. To estimate their sizes, it will be convenient to introduce a second idealized auxiliary branching process in which the data do not live on the graph $\T_N$, and to explore their clusters in both directions. For each datum $w$ encountered, we associate a direction $d(w) \in \{\pm1\}$ that describes in which time direction it was discovered during the exploration.\\

This idea is based on \cite[Lemma 2.2]{univ_ising_cutoff}. In \cite{univ_ising_cutoff}, the authors explore the information cluster one branch at a time, both backward and forward. Here, we explore it through a Markovian dynamics that discovers the whole cluster. With this aim, we introduce this second idealized process which stochastically dominates the information cluster through an explicit coupling and which we call Cluster Exploration process.}

Let $$\cW^\star= \bigcup_{i=0}^{\infty}\left( V_{N,\kappa} \times [B(0,2 m) \cup \{\# \} ]^i \right)$$
be the set of words starting by an element $u \in V_{N,\kappa}$ and letters $c_1\cdots c_i$ from $[B(0, m) \cup \{\# \} ]^i$. An element $w$ labels the datum that $u$ is connected to due to a series of branching encoded by the  $c_1\cdots c_i$, where $\#$ symbolises a change of time direction.  As usual, if $w^\prime$ is a prefix of $w$, $w$ is referred to as a descendant of $w ^\prime$, and it is denoted by $w\succ w^\prime$.\\

Given  $A$ a subset of $V_{N,\kappa}$, we construct the Cluster Exploration process (CEP) as follows:
\begin{enumerate}
    \item At time $0$, for any $w \in E$, we put a datum labeled as $w$ and $d(w)=-1$. Each datum has independent Poissonian clocks as follows:
    \begin{itemize}
        \item {\bf Glauber clocks}: $(2m \kappa+\kappa) q_-$ independent exponential clocks of rates $\lambda_{-1},.., \lambda_{-q_-}$ and $(2m \kappa+\kappa) q_+$  independent exponential clocks  of rates $\lambda_{1},..,\lambda_{q_+}$. For convenience, we label them $(z,j,q)$ with $q\in \llbracket q_-,q_+ \rrbracket$, $j \in[\kappa]$ and $z \in B(0,m)$.
        \item {\bf Refresh clocks}: One  independent exponential clock of rate $\upsilon_-$ . 
        \item {\bf Correlated Left exchange clocks and Correlated Right Exchange clocks}: $2\kappa$   independent exponential clock of rate ${ \bar{b}} $.  For convenience, we label them $(\pm,j)$ with $j\in[\kappa]$ and this is associated with exchanging with the left/right jumps and the $j$th box.
    \end{itemize}
    \item For any $w$, each time one of these clocks rings, the following then occurs:
    \begin{itemize}
        \item If it is a Refresh clock, the datum $w$ is killed. 
        \item If it is a Glauber clock $(z,j,q)$, we kill $w$  and we create on data on all the sites labeled $wu$  where $u \in B(z,m)$ in the same direction as $d(w)$ (i.e fix $d(wu)=d(w)$). We also create new data on all the sites labeled $w\#u$  where $u \in B(z,m) \setminus \{(0,\kappa)\}$ in the opposite direction as $d(w)$ (i.e fix $d(wu)=-d(w)$). 
        \item If it is a correlated left jump clock, we kill $w$, and we create $2\kappa$ new data labelled $wu$ with $u\in \{-1,0\}$. We  also create $2\kappa-1$ new data labeled $w\#u$ with $u\in \{-1,0\}\setminus \{(0,i)\}$. Same for the right jumps.
    \end{itemize}
\end{enumerate}
We proceed in this way until all data are killed, which corresponds to exploring the whole data cluster. For any $t$, denote by  $\widetilde{ \mathcal{C}}(t)$ the set of all data that has been created before time $t$.
We refer to $\widetilde{ \mathcal{C}}(\infty)$ as the set of all data that has once existed.

Notice that in this construction $t$ does not refer to the time associated with the expanded process but rather the time spent in exploration. \\

\emph{Let us explain the construction:} When we explore the information cluster, we begin by tracing its past. However, it may happen that during this exploration we encounter another datum that belongs to a different update history— one that we have just intersected. This corresponds to the creation of a second-class datum. In this construction, we always assume that this happens; we treat it as a newly discovered cluster, and we therefore explore it as well.

Intersecting a new history may happen for different reasons. Of course, either the site updates due to its own Glauber clock rings or a correlated exchange clock rings. But, one of its neighbours may update first, and this interaction creates a correlation that forces us to branch in the exploration.

\begin{Lemma}
    \label{lemma: information cluster control}
    Assume \eqref{cond: condition general cluster size}, there exists $\mu= \mu(\kappa , \bar{b}, c_+,c_-)>0$ and $\gamma=\gamma(\kappa , \bar{b}, c_+,c_-)>0$, such that for any $A \subset V_{N,\kappa}$:
    $$
    \mathbb{E}\left[e ^{\lambda \operatorname{Card}{\widetilde{\cC}(\infty)}} \right]< \exp(\gamma|A|)\, .
    $$
\end{Lemma}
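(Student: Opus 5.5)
The plan is to copy the proof of Lemma \ref{lemma: trivial cluster size}: reduce the CEP to a one-dimensional birth-and-death process in exploration time and find an exponential supermartingale. Let $\widetilde{\fd}(t)$ be the number of alive data at exploration time $t$ and $\widetilde{\mathfrak c}(t)=\operatorname{Card}\widetilde{\cC}(t)$. Neither the labels nor the directions $d(w)$ affect the clock rates or the number of children. Hence $(\widetilde{\fd},\widetilde{\mathfrak c})$ is a Markov chain with the following transitions, each at a rate proportional to $\widetilde{\fd}$:
\begin{itemize}
\item at rate $\upsilon_-\widetilde{\fd}$, one datum is killed, so $\widetilde{\fd}$ decreases by $1$ and $\widetilde{\mathfrak c}$ is unchanged;
\item at total rate at most $(2m+1)\kappa\,\upsilon_+\widetilde{\fd}$ (summing the $(2m\kappa+\kappa)(q_++q_-)$ Glauber clocks of each datum), a Glauber clock $(z,j,q)$ rings and creates at most $K_G:=2(2m+1)\kappa-1$ children, taken as $\kappa(2m+1)$ in the same direction plus $\kappa(2m+1)-1$ in the reversed direction; the precise count depends on reading $B(z,m)$ with the $2m$ convention of $\cW^\star$, and I would use the crude bound $K_G\le 4\kappa m+2\kappa-1$ suggested by \eqref{cond: condition general cluster size};
\item at total rate $2\kappa\bar b\,\widetilde{\fd}$, a correlated exchange clock rings and creates $K_C:=2\kappa+(2\kappa-1)=4\kappa-1$ children.
\end{itemize}
In each branching event, $\widetilde{\fd}$ increases by (number of children) $-1$ and $\widetilde{\mathfrak c}$ increases by the number of children.

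Next, for small $\lambda,\mu>0$, I would apply the generator to $\Phi(t)=\exp(\mu\widetilde{\mathfrak c}(t)+\lambda\widetilde{\fd}(t))$. This gives
\[
\partial_t\mathbb E[\Phi]=\mathbb E\Big[\widetilde{\fd}\,\Phi\,\big(\upsilon_-(e^{-\lambda}-1)+(2m+1)\kappa\upsilon_+(e^{K_G(\mu+\lambda)-\lambda}-1)+2\kappa\bar b(e^{K_C(\mu+\lambda)-\lambda}-1)\big)\Big].
\]
To first order in $(\lambda,\mu)$, with $\mu=\delta\lambda$, the bracket is
\[
\lambda\Big(-\upsilon_-+(2m+1)\kappa\upsilon_+\big(K_G(1+\delta)-1\big)+2\kappa\bar b\big(K_C(1+\delta)-1\big)\Big).
\]
This is negative for small $\delta$ exactly when the expected net growth per unit rate is negative, that is when
\[
2\kappa\bar b(K_C-1)+(2m+1)\kappa\upsilon_+(K_G-1)<\upsilon_-.
\]
With $K_C-1=4\kappa-2$ and $K_G-1\le 4\kappa m+2\kappa-2$, this is precisely \eqref{cond: condition general cluster size}. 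The form of \eqref{cond: condition general cluster size} is what tells me this is the intended bookkeeping.

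Since the bracket is negative, $\mathbb E[\Phi(t)]\le\Phi(0)=e^{(\lambda+\mu)|A|}$ for all $t$. The process is subcritical, so it dies out almost surely and $\widetilde{\mathfrak c}(t)\uparrow\operatorname{Card}\widetilde{\cC}(\infty)$. Monotone convergence (or Fatou) then gives the claim with $\gamma=\lambda+\mu$, the exponent in the statement being $\mu$.

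The main obstacle is the exact child count, and so the matching of constants with \eqref{cond: condition general cluster size}: the Glauber clock is indexed by $z\in B(0,m)$ and then creates data on $B(z,m)$, which really spans radius $2m$. If the exact count exceeds $4\kappa m+2\kappa-1$, I would use the dominating count, since the argument only needs an upper bound. Well-posedness of $\Phi$ (no explosion) I would handle by stopping at $\widetilde{\fd}\ge n$ and letting $n\to\infty$.
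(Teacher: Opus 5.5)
Your proposal is correct and follows the paper's proof: the same birth-and-death reduction of the CEP, the same exponential functional $e^{\mu|\widetilde\cC(t)|+\lambda\widetilde\cD(t)}$, made non-increasing for small $\lambda,\mu$ by \eqref{cond: condition general cluster size}, then $t\to\infty$. Your bookkeeping ($4\kappa-1$ children per exchange event, a factor $(2m+1)\kappa$ on the Glauber term) is cleaner than the paper's displayed computation, and your worry about the Glauber child count is unfounded: $B(z,m)$ has exactly $(2m+1)\kappa$ sites whatever $z$, since the radius $2m$ in $\cW^\star$ only encodes relative labels. That matters, because a larger count could not be absorbed by ``using the dominating count'' --- \eqref{cond: condition general cluster size} would then no longer give subcriticality.
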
 

\begin{proof} We denote by ${|\widetilde\cC(t)|}$ the cardinal of $\widetilde \cC(t)$. We observe that $\left(\widetilde{\cD}(t)\right)_{t \geq 0}$ is a classical Birth and Death process:
\begin{itemize}
    \item with rate $\upsilon_- \widetilde{\cD}(t) $, it decreases by $1$;
    \item with rate $(2m+1)\kappa\upsilon_+ \widetilde{\cD}(t)$, it increases by $4 \kappa m +2\kappa -2$ and $|\widetilde\cC(t)|$ by $4 \kappa m +2\kappa -1$;
    \item  with rate $2 \bar{b}\kappa \widetilde{\cD}(t)$, it increases by $ 4 \kappa - 2 $ and $|\widetilde\cC(t)|$ by $4 \kappa- 2$.
\end{itemize} 
Therefore, we obtain by the Kolmogorov forward equation for any $\mu,\lambda>0$
\begin{align*}
    &\partt \mathbb{E}\left[e ^{\mu |\widetilde\cC(t)|+\lambda \widetilde{\cD}(t)}\right] \\
    &=   \bbE\left[(2m+1)\kappa \upsilon_+  \widetilde{\cD}(t) \left[ e^{\mu |\widetilde\cC(t)| +\lambda \widetilde{\cD}(t)+(4\kappa m+2\kappa- 1) (\mu+\lambda) -\lambda}  - e^{\mu |\widetilde\cC(t)|+\lambda \widetilde{\cD}(t)}\right]\right]\\
    & +\bbE\left[2\bar b \kappa
    \widetilde{\cD}(t) \left[ e^{\mu |\widetilde\cC(t)|+\lambda \widetilde{\cD}(t)+(4\kappa-1)(\mu+\lambda)-\lambda}  - e^{\mu |\widetilde\cC(t)|+\lambda \widetilde{\cD}(t)}\right]\right]  \\
    &+\bbE\left[  \upsilon_-  \widetilde{\cD}(t) \left[ e^{\mu |\widetilde\cC(t)|+\lambda \widetilde{\cD}(t)- \lambda}  - e^{\mu \widetilde{\cC}(t)+\lambda \widetilde{\cD}(t)}\right]\right] \\
    &=  \bbE\left[ 
    \widetilde{\cD}(t) e^{\lambda |\widetilde\cC(t)| +2\kappa \lambda}
     \left[\upsilon_+ (e^{(4\kappa m+2\kappa-1) (\mu+\lambda) -\lambda}  - 1) + 2 \bar b \kappa (e^{(4\kappa-1) (\lambda+\mu)-\lambda}  - 1)  + \upsilon_- (e^{- \lambda }  - 1) \right]\right] 
\end{align*}
 If $\lambda$ and $\mu$ are sufficiently small, we obtain $\partt \mathbb{E}\left[e ^{\mu |\widetilde\cC(t)|+\lambda \widetilde{\cD}(t)}\right] \leq 0 $ thanks to \eqref{cond: condition general cluster size}. Consequently, $$ \mathbb{E}\left[e ^{\mu |\widetilde\cC(t)| +\lambda \widetilde{\cD}(t)}\right] \leq e^{(\lambda+\mu) |A|}\,. $$
 The result follows by taking $t$ to infinity.
\end{proof}
\subsection{Coupling the CEP with BEP} 
Our aim in this subsection is to couple the two constructions of the BEP and CEP such that we can bound the size of the information cluster. 
\begin{Lemma}
    \label{lemma: CEP coupling success}
    Given $A \subset E \subset V_{N,\kappa}$, there exists a coupling between BEP and CEP such that there exists {$\varphi: \mathscr{C}_{A}(E,\ft) \hookrightarrow \widetilde{\cC}(\infty)$} an injection. In particular, we have for any $\ft>0$:
$$ 
|\mathscr{C}_{A}(E,\ft)| \leq |\widetilde \cC(\infty)|, \qquad \text{a.s.}
$$
\end{Lemma}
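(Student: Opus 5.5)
We construct the coupling by exploring the information cluster $\mathscr{C}_A(E,\ft)$ of the BEP run up to time $\ft$ through a sequential, Markovian revealing procedure, and we drive each step of the CEP with the same exponential clocks. The idea follows \cite[Lemma 2.2]{univ_ising_cutoff}, except that the whole cluster is explored at once rather than branch by branch. Each explored BEP datum $w$ is assigned a CEP label $\varphi(w)\in\cW^\star$ and a direction $d(w)\in\{\pm1\}$. The direction records whether $w$ is being followed backward in time, towards its descendants (as in the update history), or forward in time, towards the datum whose death created it as a second-class child. We start from the data of $A$ with $d=-1$, matching the initial condition of the CEP.

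The first step is to match clocks, datum by datum. For an explored first-class datum $w$ at position $(x,i)$, its own refresh clock and its Glauber clocks of type $q$ are identified with the CEP clocks $(0,i,q)$. The correlated exchange marks on edges $\{(x,i),(x\pm1,j)\}$ are identified with the CEP clocks $(\pm,j)$, updated along exclusion moves exactly as in the HEP coupling of Subsection \ref{subsec: Coupling the HEP with BEP}. The remaining Glauber clocks $(z,j,q)$, with $z\in B(0,m)$, are matched with the Glauber marks of the neighbouring boxes $(x+z,j)$. These marks affect $w$ in the forward direction: if such a neighbour updates while $w$ is alive, then $w$ is read, so the neighbour's history becomes correlated with $w$. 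Exclusion marks do not branch, so they are simply followed and cost nothing in the comparison.

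The second step is to check, case by case, that every datum revealed in the BEP at a branching event receives a distinct CEP child label.
\begin{itemize}
\item At a Glauber mark $(z,j,q)$ hitting $w$, the BEP reveals at most the data on $B(x+z,m)\times[\kappa]$. These are the children that the CEP creates in the same direction ($wu$) and, for the reverse-time branch, in the opposite direction ($w\#u$). The excluded letter $(0,\kappa)$ accounts for the datum that triggered the update, which is already explored.
\item A correlated exchange reveals $\{x,x\pm1\}\times[\kappa]$, matching the $2\kappa$ children $wu$ and the $2\kappa-1$ children $w\#u$.
\item A refresh kills $w$ in both processes.
\end{itemize}
Second-class data, and reencounters of already-explored first-class data, are mapped injectively by taking the first time they are discovered. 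Further discoveries give surplus CEP data that are simply left unmatched. This is exactly the over-exploration built into the CEP: every potential intersection is treated as a new cluster.

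The main obstacle is that the clocks must stay independent and Poissonian after conditioning on what has already been explored. BEP marks are shared between several data: an edge mark, or a Glauber mark at a neighbour, may already have been revealed while exploring another branch. To handle this, we will use the strong Markov property and the memorylessness of Poisson processes. Whenever the BEP mark matched to a CEP clock has already been exposed, we instead attach an independent fresh clock to the CEP datum. This keeps the CEP law exact, and the already-exposed mark adds no new element to $\mathscr{C}_A$. In the other direction, we must check that the forward-in-time exploration in the BEP does not occur at a rate exceeding $(2m+1)\kappa\upsilon_+ + 2\bar b\kappa$ per datum. This rate bound is what the clock count in the CEP is designed to guarantee, and verifying it is where the care is needed.

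With these two steps, $\varphi$ is well defined and injective, since distinct explored data receive distinct words. Hence $|\mathscr{C}_A(E,\ft)|\le|\widetilde\cC(\infty)|$ almost surely.
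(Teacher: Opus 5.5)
Your construction is the paper's. Each CEP datum follows its BEP preimage in direction $d$, and CEP clocks are synchronised with unrevealed BEP marks. An already revealed mark is replaced by a fresh independent clock, which memorylessness allows. Images are fixed only at first discovery, which gives injectivity.

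\textbf{Main gap: the rate bound you postpone is false for $\kappa\ge 2$ with your matching.} The acceptance rule of a correlated-exchange mark on $\{(x,i'),(x+1,j)\}$ involves $\iota(\xi)_x$ and $\iota(\xi)_{x+1}$. So in the BEP such a mark kills a datum at an occupied endpoint and creates children on all of $\{x,x+1\}\times[\kappa]$. If $w$ sits at $(x,i)$ with $i\neq i'$, the child at $(x,i)$ is second class and attached to $w$, so the dying datum lies in the cluster of $w$.
\begin{itemize}
\item These marks live on the $2\kappa(\kappa-1)$ edges joining the \emph{other} boxes of site $x$ to sites $x\pm1$. None of them is incident to $(x,i)$, so your clocks $(\pm,j)$ never see them.
\item If $w$ is explored first, the datum such a mark links to, and everything hanging from it, can be missed.
\item The linking rate per datum can reach $(2m+1)\kappa\upsilon_+ + 2\kappa^2\bar b$, not $(2m+1)\kappa\upsilon_+ + 2\kappa\bar b$. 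The two agree only for $\kappa=1$.
\end{itemize}

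The paper's proof does not close this either. It treats correlated exchanges ``analogously'' to Glauber marks, but the Glauber analogy needs one CEP clock per edge reading the site ($2\kappa^2$ of them), while the CEP has only $2\kappa$. The defect is not merely presentational. Take $E=V_{N,\kappa}$, $A=\{(x,1)\}$, $\kappa\ge2$, constant $c_\pm$ (no Glauber branching), $\bar b>0$ small enough that \eqref{cond: condition general cluster size} holds, and $\ft$ large.
\begin{itemize}
\item All boxes are occupied at $\fs=0$, so the BEP linking intensity is always at least the rate $2\kappa\bar b$ of $w$'s own edges, and strictly larger near $\fs=0$.
\item Hence $\bbP(|\sC_A(E,\ft)|\ge 2)>2\kappa\bar b/(\upsilon_-+2\kappa\bar b)=\bbP(|\widetilde\cC(\infty)|\ge 2)$.
\item So no coupling can give the stated domination with the CEP as defined.
\end{itemize}
The natural repair is to give each CEP datum $2\kappa^2$ correlated-exchange clocks, one per edge between its site and a neighbouring site. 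This enlarges the coefficient of $\bar b$ in \eqref{cond: condition general cluster size}.

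\textbf{Second, smaller omission: you never show the exploration exhausts the cluster.} You show that discovered data receive distinct labels, but not that every element of $\sC_A(E,\ft)$ is discovered. That is what makes $\varphi$ defined on the whole cluster. The paper supplies it with a chain argument:
\begin{itemize}
\item The cluster is connected from $A$ through parent--child and second-class links. On a chain from $A$ to an undiscovered datum, take the first undiscovered element.
\item The mark linking it to its discovered predecessor cannot have been revealed earlier, since otherwise the element would have been discovered then. This is your remark that an exposed mark adds nothing new.
\item So that mark is still synchronised with a clock of the predecessor's CEP image and will be encountered, which is a contradiction.
\end{itemize}
Termination comes from Lemma \ref{lemma: information cluster control}: the CEP dies out almost surely, so one can set $\varphi=\varphi_{t^\star}$. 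Note that this step also needs every linking mark to carry a CEP clock, so it inherits the problem above.
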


\begin{proof} Fix $A \subset E \subset V_{N,\kappa}$ and $\ft >0$. Once again, we emphasise that $\fs,\ft$ refers to the time-line of the BEP, while $s,t$ refers here to the exploration time.

Let us now introduce the coupling. First, we construct the Branching Exclusion process up to time $\ft$ and denote it by $\fC_{\ft}$.

Then, we constuct $(\cC_t)_{t\ge 0}$ and a sequence of mapping $(\varphi_t)_{t \geq 0}$ as follows. At time $t=0$, we set $\varphi_0(w)=w$ for any $w \in A$.  
As long as $\tilde w \in \operatorname{Im}(\varphi_t)$ alive, it attempt to follow the evolution of $\varphi_t^{-1}(\tilde w)$ forward in time if $d(\tilde w)=1$ and backward if $d(\tilde w)=-1$:
    \begin{itemize}
        \item If $\tilde w$ (hence $\varphi_ t^{-1}(\tilde w)$) encounters a mark of $\fC_{\ft}$, we say that the mark is revealed.  Initially ($t=0$), all marks in $\fC_{\ft}$ are not revealed.
        \item If the mark encountered had already been revealed, then it is ignored. $\tilde w$ and its decendent will evolve independently of $\fC_{\ft}$.
        \item If the mark encountered has not been revealed, and it is a refresh mark. Then, $\tilde w$ is killed. In other words, we have synchronised their refresh clocks.
        \item If the mark encountered has not been revealed and it is a Glauber type $q$ at $\fs$ associated to a BEP datum $ {w}^\prime$ located $(z,j)\in\cB(\ell_{\fs}(\varphi^{-1}(\tilde w)),m) $. Then, we synchronise it with the CEP Glauber clock labelled $ (z-\ell_{\fs}(\varphi^{-1}(\tilde w)),j,q)$. Regarding the creation of data:
        \begin{itemize}
            \item \emph{If we are exploring the data in the backward direction (i.e $d(\tilde w)=-1$)}. We fix \begin{align*}
            \forall (y,k) \in B(z,m), \qquad
            \varphi_t(\mathfrak{w}(w^\prime (y-z,k)))&= \tilde w (y-\ell_{\fs}(\varphi^{-1}(\tilde w)),k)
            \end{align*}
            For a datum created in the opposite direction, for any
            $(y,k) \in B(z,m)\setminus \{(0,j)\}$, if there was a first class BEP datum $w^{\prime \prime}$ at $\fs^-$ in the position $y$. If $k\neq \kappa$, we fix
            $$\varphi_t(w^{\prime \prime})= \tilde w \# (y-\ell_{\fs}(\varphi^{-1}(\tilde w)),k)$$
            and for $k=\kappa$, we fix
               $$\varphi_t(w^{\prime \prime})= \tilde w \# (y-\ell_{\fs}(\varphi^{-1}(\tilde w)),j) \,.$$
            \item  \emph{If we are exploring the data in the forward direction (i.e $d(\tilde w)=1$)}. We fix \begin{align*}
            &\forall (y,k) \in B(z,m)\setminus \{(0,j),(0,\kappa)\}, \quad
            &\varphi_t(\mathfrak{w}(w^\prime (y-z,k)))&= \tilde w \# (y-\ell_{\fs}(\varphi^{-1}(\tilde w)),k)\\
            &\text{if } j \neq \kappa, \quad 
            &\varphi_t(\mathfrak{w}(w^\prime (y-z,\kappa)))&= \tilde w \# (y-\ell_{\fs}(\varphi^{-1}(\tilde w)),j)
            \end{align*}
            For a datum created in the same direction, for any
            $(y,k) \in B(z,m)$, if there was a first class BEP datum $w^{\prime \prime}$ at $\fs^-$ in the position $y$. We fix:
            $$\varphi_t(w^{\prime \prime})= \tilde w (y-\ell_{\fs}(\varphi^{-1}(\tilde w)),k)\,.$$
        \end{itemize}
        We say that we have discovered these data.
        \item We proceed analogously with the correlated exchange clocks.
    \end{itemize}
    
We stress that ignoring a revealed mark is without consequence for CEP's rate, due to the memoryless property of the exponential distribution. Therefore, $(\cC_t)_{t\ge 0}$ is indeed a Cluster exploration process. 

Let us notice that ignoring the revealed marks ensures that $\varphi_t$ is always injective for any $t\geq 0$. Indeed, the image of a first-class BEP datum is only fixed during the discovery of an unrevealed mark. Moreover, $\varphi_t$ is well defined for the discovered first-class BEP datum.

Moreover, Lemma \ref{lemma: information cluster control} implies that there exists almost surely a time $t^\star < \infty$ such that all CEP data are killed. We will fix:
$$\varphi= \varphi_{t^\star}\,.$$
It remains to prove the following 
\begin{Lemma}
All $\mathscr{C}_{A}(E,\ft)$ is discovered by $t_ \star$. In particular, $\varphi$ is well--defined on $\mathscr{C}_{A}(E,\ft)$. 
\end{Lemma} 
\end{proof}
Let us now prove the remaining lemma.
\begin{proof}
    Assume that there exists $w \in \mathscr{C}_{A}(E,\ft)$ that had not been discovered yet. By definition of information cluster,  there exist $w_1,\cdots w_n \in  \mathscr{C}_{A}(E,\ft) $ such that $w_1 \in A$, $w_n=w$ and for any $i \in [n]$, either $w_i \succ w_{i+1}$ or $w_{i +1}\succ w_{i}$. In other words, $\mathscr{C}_{A}(E,\ft)$ is path--connected. 
    Let $$i_\star = \inf\left\{i \leq n, \varphi \text{ is not defined for } w_i\right\}\,.$$
    In particular, no mark that induced the birth or death of $w_{i_ \star}$ in the BEP has been revealed.
   However, $w_{i_\star -1}$ was discovered at a certain time $t>0$, thanks to the reveal of a certain mark. By construction of the coupling, $\varphi_t(w_{i_\star -1})$ and $w_{i_\star -1}$ shared the same clocks. This implies that $w_i$ will be discovered, since $w_i \succ w_{i+1}$ or $w_{i +1}\succ w_{i}$. We emphasise that this clock can not have been ignored, as it is not revealed. 
\end{proof}
\subsection{Two-point correlation control}
\label{subsec Two point correlation control}
The proof of Proposition \ref{prop: covariance estimate general} follows from \cite[Claim 3.4]{univ_ising_cutoff}. For convenience of the reader, we include the proof here.
\begin{proof}
    Given $(x,i), (y,j) \in V_{N,\kappa}$. Let $\fC$, $\fC^{\prime}$ and $\fC^{\prime \prime}$ be two independent copies of the BEP dynamics started at $E= \{(x,i)\}\cup \{(y,j), j \in[\kappa]\}$.
    
    Let us denote by $A=\{(y,j), j \in[\kappa]\}$ and we introduce the event 
     $$ \cE_{(x,i)} = \left\{ (x,i) \notin \sC_A(E,\ft) \right\}\,, $$
    By definition of the information cluster, this event $ \cE_{(x,i)}$ guarentees 
    $$
    \nexists (w, w^\prime) \in \mathscr{H}_{(x,j)} (\ft)\times \sH_A(\ft)\,, \qquad\quad \fw(w)= \fw(w^\prime)\,.
    $$
    In words, there is no common first-class datum between the two update histories.
    Therefore, we may couple $\fC$ with $\fC^{\prime}$ and $\fC^{\prime \prime}$ such that on the event $\cE_{(x,i)}$:
    \begin{enumerate}
        \item $\fC(E,(x,i))=  \fC^\prime(E,(x,i)) $
        \item For any $v \in  \{(y,j), j \in[\kappa]\}$ , $\fC(E,v)=  \fC^{\prime\prime}(E,v) $
    \end{enumerate}
We obtain
$$
\begin{aligned}
\mathbb{E}\left[\bot_t(x) \bot_t(y)\right]& = \sum_{i,j=1}^\kappa \mathbb{E}\left[ \vee_t(x,i) \vee_t(y,j) \right]\\
 & = \sum_{i,j=1}^\kappa \mathbb{E}\left[ \vee_t^{\prime}(x,i) \vee_t^{\prime \prime}(y,j)+\left(\vee_t(x,i)  \vee_t(y,j) -\vee_t^{\prime}(x,i)  \vee_t^{\prime \prime}(y,j) \right) \mathds{1}_{\cE_{(x,i)}^c}\right] \\
& \leq \sum_{i,j=1}^\kappa \mathbb{E}\left[\vee_t^{\prime}(x,i) \right] \mathbb{E}\left[\vee_t^{\prime \prime}(y,j) \right]+2 \mathbb{P}\left(\cE_{(x,i)}^c\right) \\
& \leq \mathbb{E}\left[\bot_t(x) \right] \mathbb{E}\left[\bot_t(y) \right]+2 \sum_{i,j=1}^\kappa \mathbb{P}\left(\cE_{(x,i)}^c\right) .
\end{aligned}
$$
Consequently, we obtain  $$\operatorname{Cov}\left(\bot_t(x), \bot_t(y)\right) \leq 2 \sum_{i,j=1}^\kappa \mathbb{P}\left(\cE_{(x,i)}^c\right)\,. $$
Finally, we observe that
$$
\sum_{x=1}^N \operatorname{Cov}\left(\bot_t(x), \bot_t(y)\right) \leq 2  \sum_{x=1}^N \sum_{i,j=1}^\kappa  \Prob{(x,i) \in \sC_A(E,\ft)} \leq 2 \kappa \bbE\left[\left|\sC_A(E,\ft)\right|\right]\,,
$$
Lemma \ref{lemma: CEP coupling success} and 
Lemma \ref{lemma: information cluster control} implies the result. The same proof holds for $\top$.
\end{proof}

\section{Proof of Lemma  \ref{lemma: RHS- one red control}}
\label{section: Proof of Propostion the control of one error and lcocalisation}
This section   proves Lemma \ref{lemma: RHS- one red control}. While the idea of the proof is close to the techniques developped in \cite{HQ}, they  need to be adapted to our framework due to the change of the underlying graph and the added correlating jumps. To this end, we recall the graphical construction of $(\mathpzc B, U, V)$ explained in Subsection \ref{subsec: A single red perturbation} and we recall that $U$ and $V$ are two independent tracers, conditionnally on $\widehat \Xi$. 

For convenience, we say that $\mathpzc B ^\prime \in \{0,1\}^{\bbG_{N,\kappa}}$ is $D$-dense if and only if $$ \max_{ u \in V_{N,\kappa}} \operatorname{dist}(u, \mathpzc B^\prime) \leq D\,,$$
where we  identified the configuration $\mathpzc{B} \in \{0,1\}^{\bbG_{N,\kappa}}$ and the set of blue positions $ \{v \in V_{N,\kappa}, \, \mathpzc{B}_v=1 \, \}$. We denote by $\cD(D, t)$ the event on which $ (\mathpzc B _s)_{s \leq t}$ remains $D$-dense on $  [0, t]$
$$
\cD(D, t)= \{ \forall s \in [0, t],  
        \max_{ u \in V_{N,\kappa}} \operatorname{dist}(u, \mathpzc B _s) \leq D\}\,.
$$

The proof is now divided into three main steps. First, we observe that Lemma \ref{lemma: RHS- one red control} boils down to controlling the amount of time the two tracers $U$ and $V$ meet, before getting killed.
\begin{Lemma}
    \label{lemma: RHS- one red control- step 1}
    For any $t>0$, there exists $c_6(t)>0$ such that 
    \begin{equation}
        \label{eq : RHS- one red control- step 1}
      \begin{split} 
        \mathbb{E}&\left[ \sum_{\substack{u \in V_{N,\kappa}\,,\\s \in \Xi^G_u \cap [0,t] }} \ind_{U_{s}=V_{s}\in B(u,m)} + \sum_{\substack{e \in E_{N,\kappa}\,,\\s \in \Xi^C_e \cap [0,t] }} \ind_{U_{s}=V_{s}\in e}+\ind_{U_{t}=V_{t}} \right]\\ &\qquad\qquad\leq c_6(t) \int_0^t \Prob{U(s)=V(s)\neq \dagger} \dd s \,.
      \end{split}
    \end{equation}
\end{Lemma}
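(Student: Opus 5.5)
We treat the two sums and the terminal indicator separately. The sums are handled by compensating the Poisson marks: the Glauber and correlated exchange marks are independent of everything determining the tracer positions just before them. The terminal indicator is handled by a backward-in-time survival argument.

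\emph{The two sums.} Fix $u \in V_{N,\kappa}$ and $-q_-\le j\le q_+$. The mark process $\Xi^G_{j,u}$ is a Poisson process of intensity $\lambda_j$. The process $s\mapsto \ind_{U_{s^-}=V_{s^-}\in B(u,m)}$ is predictable for the filtration generated by $\widehat\Xi$ and the auxiliary Bernoulli variables. A mark of $\Xi^G_u$ at time $s$ sends the tracers to $\dagger$ only \emph{at} time $s$, so the relevant value of the indicator is its left limit. By the compensator formula (Campbell/Mecke),
\[
\mathbb{E}\Big[\sum_{s \in \Xi^G_{u} \cap [0,t]} \ind_{U_{s^-}=V_{s^-}\in B(u,m)}\Big] \le \upsilon_+' \int_0^t \Prob{U_s=V_s \in B(u,m)}\,\dd s,
\]
where $\upsilon_+'=\sum_{j}\lambda_j$. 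Here $U_s=V_s\in B(u,m)$ forces $U_s=V_s\neq\dagger$, and we have used that left limits coincide with values for a.e.\ $s$. Summing over $u$, each vertex $U_s$ belongs to at most $\kappa(2m+1)$ balls $B(u,m)$. This gives the bound $\kappa(2m+1)\upsilon_+' \int_0^t \Prob{U_s=V_s\neq\dagger}\dd s$. The same argument applies to the correlated marks $\Xi^C_e$, which have intensity $\bar b$. A vertex belongs to $2\kappa$ edges, so this term contributes $2\kappa\bar b\int_0^t \Prob{U_s=V_s\neq \dagger}\dd s$.

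\emph{The terminal indicator.} This is the main point: a single-time probability must be bounded by a time integral. The refresh marks at the common site give the mechanism. If $U_s=V_s=w\neq\dagger$, then the two tracers coincide. Indeed, conditionally on $\widehat\Xi$ they move on the same marks; they can separate only through the hesitating marks, whose Bernoulli variables are independent for $U$ and $V$. Hence $\Prob{U_t=V_t\neq\dagger}$ is at least the probability of the event on which they meet early and then never separate and never die. This is the reverse of the inequality we need, so instead we use a time-reversal style averaging. For $s\in[t-1,t]$, with $t\ge 1$ (otherwise we use $[0,t]$ and absorb constants into $c_6(t)$), consider the event $\{U_s=V_s\neq\dagger\}$. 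Given $\{U_t=V_t\neq\dagger\}$, we lower bound the conditional probability that $U_s=V_s\neq\dagger$ for $s$ in a window of length $\delta$ before $t$, by requiring that no mark touches the edges adjacent to $U$ during $[t-\delta,t]$. The number of such edges is bounded by $2\kappa$, and exclusion marks have rate $\beta_N$, so $\delta$ should be taken of order $1/\beta_N$. This would produce a factor $\beta_N$, which is not allowed.

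So the simpler route is to use the \emph{last coincidence time}, and I would try the following first. Define $M_t=\int_0^t \ind_{U_s=V_s\neq\dagger}\dd s$. We want $\Prob{U_t=V_t\neq \dagger}\le C\,\mathbb{E}[M_t]$. The process $\ind_{U_s=V_s\neq\dagger}$ can switch off only through a mark at the common site. Such marks are exclusion or hesitating marks on the at most $2\kappa$ incident edges, or killing marks. The hesitating marks have total rate of order $\beta_N$, while ordinary exclusion marks move both tracers together. Hence the coincidence state is exited at rate at most $C\beta_N$, and it is entered only through a hesitating Bernoulli outcome. This ratio argument again produces a factor $\beta_N$.

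My fallback is to bound the terminal term crudely, using that $t$ is fixed (in the application $t=1$). Replace $\ind_{U_t=V_t\neq\dagger}$ by $\frac{1}{t}\int_0^t \ind_{U_s=V_s\neq\dagger}\dd s$ plus an error. To control the error, I would exploit the stationarity inherited from the Poisson construction: conditionally on $\mathpzc B$, the pair $(U,V)$ run on $[s,t]$ meets at time $t$ with probability comparable, up to a factor $e^{C(t-s)}$ coming from killing and correlated marks, to its meeting probability at time $s$. This is a monotonicity property of the two-tracer meeting probability, and proving it rigorously is the step I expect to be the real obstacle.
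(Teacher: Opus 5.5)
Your treatment of the two sums is correct and is the paper's argument: Campbell's formula, plus the facts that each vertex lies in $(2m+1)\kappa$ balls $B(u,m)$ and in $2\kappa$ edges. You also rightly note that the indicator must be read at $s^-$, since the mark itself sends the tracers to $\dagger$; the paper leaves this implicit.

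The gap is the terminal term $\ind_{U_t=V_t\neq\dagger}$ (the reading of the statement you rightly adopt), which you never actually bound. Your first two attempts work with the annealed dynamics of the pair $(U,V)$. Its rates of entering and leaving the diagonal are of order $\beta_N$, so these attempts necessarily lose a factor $\beta_N$. Your fallback is an unproven comparison, and it is set up in the wrong way: conditionally on $\mathpzc B$ alone, the two tracers still share their exclusion marks and are not independent.

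The missing idea is to condition on the whole background $\widehat\Xi$, under which $U$ and $V$ are i.i.d. Write $p_s(w)=\Prob{U_s=w \mid \widehat\Xi}$. Then
\[
\Prob{U_s=V_s\neq\dagger \mid \widehat\Xi}=\sum_{w\in V_{N,\kappa}}p_s(w)^2 .
\]
Given $\widehat\Xi$, each mark acts deterministically on $p_s$:
\begin{itemize}
\item an effective exclusion mark permutes two coordinates, leaving $\sum_w p_s(w)^2$ unchanged;
\item a hesitating mark at which the tracer uses its fair coin (independent for each tracer) replaces the two coordinates $p(a),p(b)$ by their average, lowering the sum by $(p(a)-p(b))^2/2$;
\item a refresh, Glauber or correlated mark sends mass to $\dagger$, lowering the sum.
\end{itemize}
Hence $s\mapsto\Prob{U_s=V_s\neq\dagger\mid\widehat\Xi}$ is non-increasing. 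This is exactly the observation the paper uses. Bound the value at $t$ by the value at each $s\le t$, average over $s\in[0,t]$, and take expectations:
\[
\Prob{U_t=V_t\neq\dagger}\le\frac1t\int_0^t\Prob{U_s=V_s\neq\dagger}\,\dd s .
\]
So the monotonicity you were looking for holds with constant $1$: no window of length $1/\beta_N$ and no factor $e^{C(t-s)}$ are needed. The constant $c_6(t)=(2m+1)\kappa\upsilon_++2\kappa\bar b+1/t$ then works.
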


Thanks to the particular structure of $\bbG_{N,\kappa}$, we focus on bounding
$$
\int_0^t \Prob{\operatorname{dist}(U(s),V(s))=0}\dd s= \Esp{\int_0^t \ind_{\operatorname{dist}(U(s),V(s))=0 } \dd s}\,.
$$
where $\operatorname{dist}$ is defined in \eqref{def: distance on the expanded graph} and we choose as a convention $\operatorname{dist}(\dagger,u)=\operatorname{dist}(u,\dagger)=\infty$ for any $u\in \bbG_{N,\kappa}\cup \{\dagger\}$. Let us introduce some notations 
\begin{itemize}
    \item For any $i\geq 0$, $\cT_i$ the time of the $i^{\rm th}$ return of  $\operatorname{dist}(U,V)$ to zero.
    \item For any $i\geq 0$ such that $\cT_i< \infty $, we denote by $\tau_i^{0 \rightarrow}$ the time $\operatorname{dist}(U,V)$ remained equal to zero after $\cT_i$
   $$
   \tau_i^{0 \rightarrow}= \inf\{s \geq 0, \operatorname{dist}(U(s+\cT_i ),V(s+\cT_i)) \neq 0 \}\,. 
   $$
    \item For any $i\geq 0$ such that $\cT_i< \infty $, we denote by $\tau_i^{0 \rightarrow 1}$ the  first time $\operatorname{dist}(U,V)$ reached the value $1$ after $\cT_i$
   $$
   \tau_i^{0 \rightarrow 1}= \inf\{s \geq 0, \operatorname{dist}(U(s+\cT_i ),V(s+\cT_i)) = 1 \}\,. 
   $$
   Notice that $\tau_i^{0 \rightarrow } \leq \tau_i^{0 \rightarrow 1}$, since one of tracer may be killed.
   \item For any $i\geq 0$ such that $\cT_i$ and $\tau_i^{0 \rightarrow 1}< \infty $, we denote by $\tau_i^{1 \rightarrow 0}$ the first time $\operatorname{dist}(U,V)$ needed to reach back the value $0$ after $\cT_i+ \tau_i^{0 \rightarrow 1}$
   $$
   \tau_i^{0 \rightarrow 1}= \inf\{s \geq 0, \operatorname{dist}(U(s+\cT_i+ \tau_i^{0 \rightarrow 1}),V(s+\cT_i+\tau_i^{0 \rightarrow 1})) = 0 \}\,. 
   $$
\end{itemize}
By convention, once $U$ or $V$ is killed, all these hitting times are set equal to $\infty$ for convenience.

Thanks to these notations, we decompose the interval $[0,t]$ into excursions of the process $\operatorname{dist}(U,V)$ and we obtain
\begin{equation}
    \label{eq: intermediate step 1 RHS proof excursion}
    \int_0^t \ind_{\operatorname{dist}(U(s),V(s))=0 } = \sum_{i\geq 0} \ind_{\cT_i< t } \min(\tau_i^{0\rightarrow}, t-\cT_i) \,.
\end{equation}
 Therefore, it is sufficient to control both the time needed to exit zero and the number of excursions. 
\begin{Lemma}[Bound on the exiting time]
    \label{lemma : RHS step 2 Bound on the exiting time}
    Given a sequence $(D_N)_{N \in \N}$ such that $ {D_N^2}{=o(\beta_N)}$. There exists $c_7=c_7(t)>0$ such that for any $N \in\N$, $\mathpzc B_0 \in \{0,1\}^{\bbG_{N,\kappa}}$, $D_N$-dense, and $U_0,V_0 \in V_{N,\kappa}$ such that $\operatorname{dist}(U_0,V_0)=0$
    \begin{equation}
        \max_{t\geq 0}\bbE_{\mathpzc B_0, U_0,V_0}\left[\min(\tau_1^{0 \rightarrow},t) \mathds{1}_{\cD(D_N,t)} \right] \le c_7 \frac{D_N^2}{\beta_N} \, .
    \end{equation}
\end{Lemma}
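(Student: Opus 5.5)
We need to bound the expected time two tracers $U,V$ spend at distance $0$, i.e.\ on the same column $\{x\}\times[\kappa]$, before one of them leaves that column, on the event that the blue set stays $D_N$-dense. The mechanism is that exclusion marks on edges between the column $x$ and its neighbours $x\pm1$ move the tracers at rate of order $\beta_N$. This happens unless the edge is blocked by the blue/non-blue structure, where only the hesitating marks act, at effective rate $\beta_N$ after the Bernoulli$(1/2)$ thinning. So the plan is to show that, as long as the configuration is $D_N$-dense, one of the tracers leaves the column within time of order $D_N^2/\beta_N$ in expectation.

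\textbf{Step 1 (reduction to a one-dimensional walk).} I first want to check that the tracer $U$ is \emph{never} blocked. Given $U_s=(x,i)$, each of the $2\kappa$ edges $\{(x,i),(x\pm1,j)\}$ carries exclusion marks at rate $\beta_N$ and hesitating marks at rate $2\beta_N$. If the endpoints are not both blue, the exclusion mark moves $U$. If both are blue, the hesitating mark moves $U$ with probability $1/2$. In both cases, conditionally on the past, $U$ jumps across each such edge at rate at least $\beta_N$, so $\operatorname{dist}$ increases.

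If this holds, then $\tau_1^{0\rightarrow}$ is dominated by an exponential variable of rate $2\kappa\beta_N$, giving the bound $\lesssim 1/\beta_N\le D_N^2/\beta_N$. The only way $\operatorname{dist}(U,V)$ can stay $0$ after $U$ jumps is if $V$ jumps to the same column simultaneously. That cannot happen, since the marks are distinct Poisson points a.s. The exception is an edge shared by $U$ and $V$ with both endpoints in one column pair, which is impossible because $U\neq V$, or $U=V$.

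\textbf{Step 2 (the case $U=V$).} This is the main point to verify. When $U_s=V_s$, a single mark moves both tracers together only if they take the same Bernoulli decision. For exclusion marks that is deterministic, so distance $0$ is preserved and the tracers then continue as one point. I therefore need to bound the time before the tracers separate columns, and I expect this case to be the main obstacle.

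My approach is as follows. Coinciding tracers move only by exclusion moves, so they perform a walk that projects to a rate-$2\kappa\beta_N$ nearest-neighbour walk in $\T_N$ (the projection $\iota$). Since $\operatorname{dist}(U,V)=0$ only requires the same column, coincidence is not what keeps $\operatorname{dist}$ zero; the column still changes at rate $2\kappa\beta_N$. Hence even coalesced tracers leave column $x$, but they then remain at distance $0$, which breaks Step 1 as stated.

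To handle this, I will use $D_N$-density. Within distance $D_N$ there is a blue site, and the tracers split when they hit a hesitating mark on a blue–blue edge. The exclusion dynamics brings the pair adjacent to a blue-adjacent configuration within diffusive time $D_N^2/\beta_N$ in expectation; this is where $\cD(D_N,t)$ enters. I will bound this by a Lyapunov/hitting-time estimate for the walk on the interval of length $D_N$, using the standard fact that the expected exit time from an interval of length $D$ for a rate-$\beta_N$ walk is $\le D^2/\beta_N$. After that, there is probability bounded below, of order $1/2$, of splitting at each attempt.

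\textbf{Step 3 (conclusion).} I will combine the geometric number of attempts with the strong Markov property at successive attempt times, restricting to $\cD(D_N,t)$ throughout, and take $\min(\cdot,t)$ so that the bound is uniform in $t$. This gives $c_7 D_N^2/\beta_N$, the bound stated in Lemma~\ref{lemma : RHS step 2 Bound on the exiting time}.
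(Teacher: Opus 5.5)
Your architecture is the paper's. When $U_0\neq V_0$, the first jump of either tracer already separates the columns, and each tracer jumps at rate exactly $2\kappa\beta_N$ whatever $\mathpzc B$ is. When $U_0=V_0$, the tracers move together until they straddle a blue--blue edge, a hesitating mark rings there, and they choose differently. So you need density, a diffusive-time approach to a blue site, a splitting probability bounded below, and an iteration.

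The gap is in the central estimate of Step 2. An exit-time bound for the walk of the coalesced pair on an interval of length $D_N$ does not imply that the pair becomes adjacent to a blue site. The blue sites are not fixed targets: they are moved by the same exclusion marks and destroyed by Glauber and correlated-exchange marks. Moreover, $\cD(D_N,t)$ only says that at each time some blue site lies within distance $D_N$, not that the walk ever approaches one. Fixing a single blue site does not rescue the claim ``adjacent within time $D_N^2/\beta_N$ in expectation'' either. Two independent walks on $\T_N$ started at distance of order $D_N$ meet in expected time of order $D_N N/\beta_N$.

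What is missing is to follow one specific blue site with a third tracer, and to prove a probability bound rather than an expectation bound, as the paper does. The steps are as follows.
\begin{enumerate}
\item Using density at a vertex at distance $D_N+1$ from $U_0$, pick $W_0\in\mathpzc B_0$ with $1\le \operatorname{dist}(U_0,W_0)\le 2D_N$.
\item Run a tracer $W$ from $W_0$ with the same rules. It stays on blue sites until it is killed, and as long as $\operatorname{dist}(U,W)\ge 2$ the projections $\iota(U)$ and $\iota(W)$ are independent walks on $\T_N$.
\item The difference walk reaches distance $1$ before time $8D_N^2/\beta_N$ with probability bounded below. This is where your exit-time estimate does apply: to the difference walk on an interval of length of order $D_N$, together with a probability of roughly $1/2$ of exiting at the near end.
\item $W$ is still alive at that time with probability at least $1/2$, since $D_N^2/\beta_N\to 0$.
\item At the adjacency time $\tau$, with probability bounded below by a constant depending on $\kappa$, a hesitating mark on $\{U_\tau,W_\tau\}$ precedes every other move of $U$ or $W$, and $U,V$ choose differently.
\end{enumerate}
This yields $\mathbb P(\tau_0^{0\rightarrow}\le C D_N^2/\beta_N)\ge c_8$ uniformly over $D_N$-dense starts. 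The lemma then follows by cutting $[0,t]$ into blocks of that length and applying the Markov property at block times. On $\cD(D_N,t)$ each block starts from a dense configuration, so there is a new blue site to track; if the tracked site dies or drifts away, you simply restart.
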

 This result is expected. Indeed, if we assume that the tracers are not killed, the only way to separate them is for them to reach a blue position and then make a different choice. Therefore, we expect that we need to wait at least a time $\frac{D_N^2}{\beta_N}$.

\begin{Lemma}[Bound on the number of excursion]
    \label{lemma: RHS step 3 Bound on the number of excursion}
    For any $t\ge 0$, there exist $c_{9}=c_{9}(t)>0$ such that 
    $$
    \bbE\left( \sum_{i \geq 0} \mathds{1}_{{\cT}_i \le t}\right) \leq c_{9}  \sqrt{\beta_N} .
    $$
\end{Lemma}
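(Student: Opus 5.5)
The plan is to bound the number of returns of $\operatorname{dist}(U,V)$ to $0$ before time $t$ by a renewal argument. Each return $\cT_{i+1}$ with $i\ge 0$ requires a full cycle: the pair stays at distance zero for a time $\tau_i^{0\to}$, reaches distance $1$ at time $\tau_i^{0\to1}$, and then returns to $0$ after a further time $\tau_i^{1\to0}$. Since $\cT_{i+1}-\cT_i\ge \tau_i^{1\to 0}$, it suffices to show two things:
\begin{itemize}
\item each excursion from distance $1$ back to $0$ has a time length which is not too small, at least with uniformly positive probability;
\item the number of excursions is then controlled by $t$ divided by a typical excursion length of order $\beta_N^{-1/2}$, in a sense to be made precise.
\end{itemize}
Concretely, I will aim for the estimate
\[
\bbP\Bigl[\tau_i^{1\to0}\geq s \,\Big|\, \cF_{\cT_i+\tau_i^{0\to1}}\Bigr] \ge \frac{c}{\sqrt{\beta_N s}\vee 1}\wedge 1,
\]
or more conveniently that the expected local time at $0$ accumulated per excursion is bounded below. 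The cleanest route is the one of \cite{HQ}: compare $\operatorname{dist}(U,V)$ with a one-dimensional continuous-time walk on $\{0,\dots,\lfloor N/2\rfloor\}$ jumping at rate of order $\beta_N$.

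The key steps, in order, are as follows.

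\textbf{Step 1: drift-free random walk away from $0$.} When $\operatorname{dist}(U,V)\ge 1$, the two tracers occupy different sites of $\T_N$. Conditionally on $\widehat\Xi$ and on the Bernoulli choices, each tracer moves along exclusion and hesitating marks. Because edges of $\bbG_{N,\kappa}$ connect all boxes of neighbouring sites, each tracer performs, in projection, a symmetric nearest-neighbour walk on $\T_N$ with total jump rate between $\kappa\beta_N$ and $2\kappa\beta_N$. This uses two facts: blue/non-blue vertices change the mark family used but not the rate bound, and hesitating marks have intensity $2\beta_N$ thinned by $1/2$. The projected walks are independent except when they are at distance $1$ and use a common edge. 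Hence $D_s=\operatorname{dist}(U_s,V_s)$ behaves, away from $0$, as a symmetric walk whose jump rates are bounded above and below by constant multiples of $\beta_N$; killing only shortens excursions and only helps.

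\textbf{Step 2: the excursion estimate.} For a symmetric walk on $\N$ with rates of order $\beta_N$ started at $1$, the hitting time $H_0$ of $0$ satisfies $\bbP(H_0> s)\ge c(\beta_N s)^{-1/2}$ for $s\ge \beta_N^{-1}$. The torus only matters when $\beta_N s\gtrsim N^2$, which occurs with probability at most $c/N$ per excursion at these scales. I will prove this bound via an optional-stopping/martingale argument for $D_s$, using that $D$ is a martingale up to boundary effects with quadratic variation rate $\asymp\beta_N$. This handles the uniform-in-environment issue since rates depend on $\mathpzc B_s$.

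\textbf{Step 3: counting excursions.} Let $K$ be the number of completed cycles before $t$. By Step 2 and a Markov-property argument (a standard renewal comparison), the probability that each successive excursion lasts at least time $t$ is $p\ge c(\beta_N t)^{-1/2}$, independently across excursions conditionally on the past. Thus $K$ is stochastically dominated by a geometric variable with parameter $p$, which gives
\[
\bbE\Bigl[\sum_{i\ge0}\ind_{\cT_i\le t}\Bigr]\le 1+\frac{1}{p}\le c_9\sqrt{\beta_N}.
\]

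The main obstacle is Step 1. The pair $(U,V)$ is not Markov by itself: the jump mechanism depends on $\mathpzc B$ (exclusion versus hesitating marks), and at distance $1$ the tracers share edges, so their moves are correlated. I would handle this by working with the Markov triple $(\mathpzc B,U,V)$ and noting that the uniform rate bounds survive any $\mathpzc B$. I would then use only a martingale lower bound on excursion lengths, $\bbE[(D_{s\wedge H_0})^2]\asymp \beta_N s$ with increments bounded by $1$, which is insensitive to the exact transition structure.
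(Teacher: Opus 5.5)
Your route is correct in outline and genuinely different from the paper's, but Step 2 needs two repairs.

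\textbf{Comparison with the paper.} The paper couples the excursion structure of $\operatorname{dist}(U,V)$ with that of two independent walks $(\hat U,\hat V)$ on $\bbG_{N,\kappa}$. It proves both $\tau_i^{0\to1}\ge\hat\tau_i^{0\to1}$ and $\tau_i^{1\to0}\ge\hat\tau_i^{1\to0}$, so the number of returns is dominated pathwise by that of $(\hat U,\hat V)$. It then imports the local-CLT estimate \cite[Lemma 49]{HQ}.

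You instead run geometric trials. By the strong Markov property of $(\mathpzc B,U,V)$ at the start of each excursion from distance $1$, that excursion avoids $0$ (or ends by killing) during a time $t$ with conditional probability at least $p$, uniformly in the state. Hence the count is dominated by a geometric variable with mean at most $1+1/p$. This discards all information about the sojourn at distance $0$, so the $\mathpzc B$-dependent sticking of $U=V$ and the first half of the paper's comparison become unnecessary. It also replaces the local CLT by gambler's ruin for a birth--death chain that is exactly symmetric at distance $\ge 2$ and whose first step from distance $1$ goes up with probability bounded below. In exchange, the paper's route gives a pathwise domination by an explicit process and reuses an existing lemma.

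\textbf{First repair: the excursion tail bound.} The second-moment estimate in your last paragraph is false and would not suffice anyway. For the chain stopped at $0$ one has $\bbE[D^2_{s\wedge H_0}]\asymp\sqrt{\beta_N s}$, not $\beta_N s$. Moreover, Cauchy--Schwarz, $\bbP(H_0>s)\ge \bbE[D_{s\wedge H_0}]^2/\bbE[D^2_{s\wedge H_0}]$, combined with the crude bound $\bbE[s\wedge H_0]\le s$, only yields $\bbP(H_0>s)\gtrsim(\beta_N s)^{-1}$. That gives a count of order $\beta_N t$, which is useless for the $D_N^2/\sqrt{\beta_N}$ term. The $\beta_N^{-1/2}$ comes from gambler's ruin instead. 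By optional stopping at the exit from $\{1,\dots,L\}$, the distance reaches $L$ before $0$ with probability $\gtrsim 1/L$. From $L$, a maximal inequality keeps it away from $0$ on $[0,t]$ with probability bounded below once $L^2\gtrsim \beta_N t$.

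\textbf{Second repair: the torus.} The torus is not a small-probability correction, contrary to your Step 2 remark. With $\beta_N=N^2$ and $t$ fixed you sit exactly at the scale $\beta_N t\asymp N^2$, where reflection at $\lfloor N/2\rfloor$ lowers survival. Your torus-free bound $p\ge c(\beta_N t)^{-1/2}$ would even ``prove'' the statement for $\beta_N\gg N^2$, where it fails. There the pair equilibrates on $\T_N$, and for dense $\mathpzc B_0$ it returns of order $\beta_N t/N\gg\sqrt{\beta_N}$ times. The fix is to take $L=\lfloor N/4\rfloor$, which gives $p\ge c(t)/N=c(t)\beta_N^{-1/2}$. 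This is exactly where the implicit assumption $\beta_N\lesssim N^2$ enters (in the paper's application, $\beta_N=N^2$). Allowing $c_9$ to depend on $t$ then absorbs the exponential-in-$t$ loss.
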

Now, we can prove Lemma \ref{lemma: RHS- one red control}. 
\begin{proof}[Proof of Lemma \ref{lemma: RHS- one red control}]
    Thanks to \eqref{eq: intermediate step 1 RHS proof excursion}, we have
    \begin{align*}
        \Esp{\int_0^t \ind_{\operatorname{dist}(U(s),V(s))=0 } \dd s} & \leq t \Prob{\cD(D_N,t)}+  \sum_{i=1}^\infty \Esp{  \ind_{\cT_i< t } \ind_{ \cD(D_N,t) } \max(\tau_i^{0\rightarrow}, t-\cT_i)}\\
        & \leq t \Prob{\cD(D_N,t)}+ c_7 \frac{D_N^2}{\beta_N}  \sum_{i=1}^\infty \Esp{  \ind_{\cT_i< t } } \\
        & \leq t \Prob{\cD(D_N,t)}+ c_7 c_9 \frac{D_N^2}{\sqrt{\beta_N}}\,,
    \end{align*}
    where in the second inequality, we used the Markov property and Lemma \ref{lemma : RHS step 2 Bound on the exiting time}, and in the last inequality we used Lemma \ref{lemma: RHS step 3 Bound on the number of excursion}. Finally, Lemma \ref{lemma: RHS- one red control- step 1} yields the result.
\end{proof}
\subsection{Proof of Lemma \ref{lemma : RHS step 2 Bound on the exiting time}} Let us fix a sequence $(D_N)_{N \in \N}$ such that $ {D_N^2}{=o(\beta_N)}$. 
We claim that it is sufficient to prove the following lemma.
\begin{Lemma}
    \label{lemma: bound exiting time _ step 1}
There exist $c_8>0$ such that for any $\mathpzc B_0 \in \{0,1\}^{\bbG_{N,\kappa}}$, $D_N$-dense, and $U_0,V_0 \in V_{N,\kappa}$ such that $\operatorname{dist}(U_0,V_0)=0$, we have 
\begin{equation}
    \label{eq: bound exiting time _ step 1}
    \bbE_{\mathpzc B_0, U_0,V_0}\left(\tau_0^{0 \rightarrow} \leq  8\frac{D_N^2}{\beta_N} \right) \geq c_{8}>0\,.
\end{equation}
\end{Lemma}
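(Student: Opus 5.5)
The statement should be read as $\bbP_{\mathpzc B_0,U_0,V_0}\bigl(\tau_0^{0\rightarrow}\le 8 D_N^2/\beta_N\bigr)\ge c_8$. The plan is to force the two tracers, which sit in the same column $\iota(U_0)=\iota(V_0)=x$, to reach a blue box within time of order $D_N^2/\beta_N$, and then to let the independent Bernoulli choices of the hesitating marks separate them. Until they separate, $U$ and $V$ are driven by the same marks of $\widehat\Xi$, except on edges joining two blue sites. Their column $\iota(U_s)$ therefore performs a continuous-time simple random walk on $\T_N$ at rate of order $\beta_N$, whatever the blue configuration. The stated claim is lower bounded by the probability of the event that they separate, or that one of them is killed, before time $8D_N^2/\beta_N$; killing only helps, since it also ends the period at distance zero.

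\emph{Step 1 (reaching a column near a blue site).} Since $\mathpzc B_0$ is $D_N$-dense, there is a blue site $b_0$ with $\operatorname{dist}(b_0,U_0)\le D_N$. The difficulty is that $\mathpzc B$ itself moves. Blue sites follow the exclusion marks exactly as the tracers do, and they are only created (refresh) or destroyed (Glauber, correlated) at rates $O(1)$. On the time scale $D_N^2/\beta_N=o(1)$, with probability $1-o(1)$ no destruction occurs in a ball of radius of order $D_N$ around $x$. I would then consider the process pair (tracer column, a tagged blue particle), which is a two-particle exclusion on $\bbG_{N,\kappa}$. Its column difference is a lazy random walk with rates of order $\beta_N$ and bounded holding. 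By Donsker's principle, or a simple second-moment/gambler's ruin bound, this difference hits $0$ (same column) before time $4D_N^2/\beta_N$ with probability at least some $c>0$ that does not depend on $N$.

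\emph{Step 2 (splitting).} Once the tracers and a blue site share a column, the next exclusion-type event acts in one of two ways. It may involve an edge from the tracers' box to another box in which both endpoints are blue; here one has to be careful, since the tracer site is not blue, so the relevant move is actually the blue site jumping to a neighbor while the tracers jump onto a blue site. Alternatively, $U=V$ may move onto a blue box. In either case, within an additional time of order $1/\beta_N$, the tracers occupy a blue-adjacent configuration where an edge $e$ has both endpoints blue or hesitating. There an independent $\mathrm{Bernoulli}(1/2)$ is drawn separately for $U$ and for $V$. With probability $1/4$ they disagree, and one tracer moves to a neighboring column while the other stays, giving $\operatorname{dist}(U,V)\ne0$. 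This happens before the common marks move them together again with probability bounded below, since all rates are comparable to $\beta_N$. The accumulated time is at most $8D_N^2/\beta_N$ with probability at least $c\cdot\tfrac14\cdot c'=:c_8$.

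\emph{Main obstacle.} The delicate point is Step 2's precise mechanism. A tracer position is never itself blue, since the red perturbation is not blue; so the separating move must come from a hesitating mark on an edge between the tracer's current box and a blue box of a neighboring column. I would need to check from the construction that such marks let $U$ and $V$ make independent choices. If instead hesitating marks only apply when both endpoints are blue, I would use two consecutive steps: first a blue site and a tracer interact, then the resulting configuration offers the independent choice. The second concern is controlling the motion of blue sites in Step 1. I would handle this by tagging one blue particle and discarding, at cost $o(1)$, the event that it is destroyed before time $8D_N^2/\beta_N$.
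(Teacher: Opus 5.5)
Your overall plan matches the paper's proof. The paper follows a blue site with a third tracer $W$ started at some $W_0\in\mathpzc B_0$ with $1\le\operatorname{dist}(U_0,W_0)\le 2D_N$. It uses that $\iota(U)$ and $\iota(W)$ are independent column walks at rate of order $\beta_N$, so $\operatorname{dist}(U,W)=1$ is reached before $8D_N^2/\beta_N$ with probability bounded below, while $W$ survives with probability at least $1/2$. It then splits $U$ from $V$ within a further time $1/\beta_N$.

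\emph{The gap is the splitting step.} You leave it open, and under the reading you adopt it fails. Suppose the tracer site is never blue. Every exclusion mark that moves $U=V$ is shared by both copies and carries the $\mathpzc B$-value $0$ along with them. Any refresh, Glauber or correlated mark touching them kills both. Hesitating marks act only on edges whose two endpoints are blue, and such edges never contain the tracer site. So $U$ and $V$ stay glued until killed, and your fallback of ``two consecutive steps'' can never give them an independent choice.

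\emph{The missing idea: the tracer's site is itself one of the Bernoulli coordinates.} Under $\mu^\prime$ the site $U_0$ carries $\cB(\bar\rho)$ exactly like the blue sites, and this is what makes randomising a swap on the edge between $U$ and a blue neighbour legitimate. The paper uses exactly this: $U$ and $V$ are ``stuck together until they are killed or meet \emph{another} blue position''. Likewise, the separation rate from $U=V$ is $\operatorname{Card}(\mathpzc B\cap[B(U,1)\setminus B(U,0)])\beta_N$ in the rate list preceding Lemma~\ref{lemma: bounds on the number of excursion}. With this, the split works as follows. At the time $\tau$ where $\operatorname{dist}(U,W)=1$, a hesitating mark (rate $2\beta_N$) rings on the edge $\{U_\tau,W_\tau\}$ within time $1/\beta_N$, before any of the $O(\kappa\beta_N)$ competing marks moves $U$ or $W$. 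The two tracers then make different choices, $S_U\neq S_V$, which has probability $1/2$ (not $1/4$).

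\emph{Your target configuration is also the wrong one.} $\bbG_{N,\kappa}$ has no edges inside a column, so a blue site sharing the tracers' column offers no edge for a hesitating mark to act on. What you need is a blue site at distance exactly $1$, i.e.\ the stopping time $\tau$ above. Your walk passes through it anyway, since the column difference moves by $\pm1$.

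Three smaller points:
\begin{enumerate}
\item The case $U_0\neq V_0$ must be treated separately, because there $U$ and $V$ are not driven by the same marks. It is immediate: the first mark moving or killing either of them ends the zero-distance period, so $\tau_0^{0\rightarrow}\preceq\mathrm{Exp}(4\kappa\beta_N)$.
\item Your claim that no destruction occurs in a ball of radius $D_N$ is false in general. The expected number of such events is of order $D_N^3/\beta_N$, which need not vanish when only $D_N^2=o(\beta_N)$ is assumed. You only need the single tagged blue site to survive.
\item Tagging that blue site requires a rule at blue--blue hesitating marks. This is what the third tracer $W$, with its own Bernoulli choices, provides.
\end{enumerate}
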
 

By splitting the time interval $[0,t]$ in interval of size $\frac{8 D_ N ^2}{\beta_N}$, Lemma \ref{lemma : RHS step 2 Bound on the exiting time} boils down to studying a geometric law of parameter $c_8$ (see \cite[Proof of Lemma 43]{HQ}). Let us now focus on the proof of   Lemma \ref{lemma: bound exiting time _ step 1}.
\begin{proof}[Proof of Lemma \ref{lemma: bound exiting time _ step 1}] We observe that if $U_0 \neq V_0$, then $\tau_0^{0 \rightarrow}$ is the first time either $U$ or $V$ jumps thanks to an exclusion mark or is killed. Therefore, in this case, we obtain
\begin{equation}
    \tau_0^{0 \rightarrow} \preceq {\rm Exp}( 4 \kappa \beta_N )
\end{equation}
where $\preceq $ corresponds to stochastic domination. This imply \eqref{eq: bound exiting time _ step 1}. However, if $U_0 = V_0$, both $U$ and $V$ are stuck together until they are killed or meet another blue position. We will focus on the second opportunity. 

Since  $\mathpzc B_0$ is $D_N$-dense, there exist $W_0 \in V_{N,\kappa}$ such that 
$$1 \leq  \operatorname{dist}(U_0,W_0) \leq 2 D_N \,.$$
Let us then consider the enhanced expanded process\footnote{see Remark \ref{remark: multiple tracers}.} with three tracers $U,V$ and $W$ started respectively at $U_0,V_0$ and $W_0$ and evolving independently conditionally $\widehat{\Xi}$. We denote by $\tau$ the first time $\operatorname{dist}(U,W) = 1$ and $(\cG_t)_{t\ge 0}$ the associated filtration. 

As long as neither $U$ nor $W$ is killed and  $\operatorname{dist}(U,W) \geq 1$, $\iota(U)$ and $\iota(W)$ follow independent random walks on the original graph $\T_{N}$. Therefore, there exists $c_{11}>0$ such that
\begin{equation}
     \label{eq: bound exiting time _ step 2}
    \bbP\left(\left.\tau \leq \frac{8 D_N^2}{\beta_N}\right|
     W_{\frac{8 D_N^2}{\beta_N}} \neq \dagger \right) \ge c_{11}\,,
\end{equation} thanks to  \cite[Lemma 47]{HQ}. Moreover, we observe that there exists a constant $C=C(c_-,c_+,\tilde{b},\kappa)$ such that 
\begin{equation*}
   \bbP\left[ 
   W_{\frac{ 8 D_N^2}{\beta_N}} \neq \dagger \right] \geq \Prob{\cP\left( \frac{C D_N^2}{\beta_N}\right) =0 }\,.
\end{equation*}
Since 
$
\frac{8 D_N^2}{\beta_N} 
$ converge to zero,  we have for $N$ large enough, 
\begin{equation}
     \label{eq: bound exiting time _ step 3}
   \bbP\left[ 
    W_{\frac{8D_N^2}{\beta_N}} \neq \dagger \right] \geq \frac 1 2\,.
\end{equation}
Moreover, at $\tau$, the two random walks $U$ and $V$ are neighbours of $W$. Therefore, $U$ and $V$ can separate if there is a hesitating exclusion mark on the edge between $U_{\tau}$ and $W_{\tau}$ before any one of them moves away due to an exclusion or a hesitating exclusion mark. Moreover,   $U$ and $V$ need to make different choices. Thus, conditionnally on $\cG_\tau$ and $ W_{\frac{8D_N^2}{\beta_N}} \neq \dagger$, we have 
\begin{equation}
     \label{eq: bound exiting time _ step 4}
    \bbP\left[\left.\tau_0^{0 \rightarrow} \leq \tau +\frac{1}{\beta_N}\right|\cG_\tau,  W_{\frac{8D_N^2}{\beta_N}} \neq \dagger \right] 
   \geq \Prob{P_1\geq 1,\, P_2 = 0, \, P_3 =0 , S_U \neq S_V}\, , 
\end{equation}
where $P_1, P_2,P_3, S_U$ and $S_V$ are independents, $P_2,P_3$ (resp. $P_1$) are Poissons variables of parameter $3\beta_N\kappa \frac{1}{\beta_N}$ (resp. $2\beta_N \frac{1}{\beta_N}$  ) and $S_U$ and $S_V$ are Binomial random variables of parameter  $\frac{1}{2}$. Notice that the right hand side of \eqref{eq: bound exiting time _ step 4} is a positive constant.
The proof then follows from \eqref{eq: bound exiting time _ step 2},\eqref{eq: bound exiting time _ step 3} and \eqref{eq: bound exiting time _ step 4}.
\end{proof}
\subsection{Proof of Lemma \ref{lemma: RHS step 3 Bound on the number of excursion}}
We are interested in studying the behaviour of $\operatorname{dist}(U, V)$. Let us sum up the rate at which this distance evolves as long as neither $U$ nor $V$ is killed:
\begin{itemize}
    \item If $\operatorname{dist}(U,V)\ge 2$, then $U$ and $V$ behave as two independent random walk on $\bbG_{N,\kappa}$ where all edges have rates $ \beta_N$. Therefore, $\iota(U)$ and $\iota(V)$ behave as two independent random walks on $\T_{N}$ where all edges have rates $\kappa \beta_N$. 
    \item If $\operatorname{dist}(U,V)= 1$: it will decrease by $1$ with rate $(2\kappa-2)\beta_N+\beta_N$ or increase by $1$ with rate $2\kappa \beta_N$ 
    \item If $\operatorname{dist}(U,V)= 0$: the rate depend on the position of $U$ and $V$ as well as the blue positions $\mathpzc B$
    \begin{itemize}
        \item If $U=V$, it increases with rate $ \operatorname{Card}(\mathpzc B \cap \left[B(U,1)\setminus B(U,0)\right]) \beta_N $.
        \item If $U \neq V$, it increases with rate $ 4 \kappa \beta_N $.
    \end{itemize}
\end{itemize}

We also introduce $(\hat U, \hat V)$ two independent random walk on $\bbG_{N,\kappa}$ accelerated by $\beta_N$. We observe that the rates of $\operatorname{dist}(\hat U, \hat V)$ changes always dominate the rate $\operatorname{dist}( U,  V)$. Therefore, we can compare their excursions' length.
\begin{Lemma}
    \label{lemma: bounds on the number of excursion}
    Given 
    $$
    \operatorname{dist}( U,  V)(0)=\operatorname{dist}(\hat U, \hat V)(0)=0\,,
    $$
    There exists a coupling such that almost surely for any $i\geq 0$
    \begin{equation}
        \tau_i^{0 \rightarrow 1} \geq \hat{\tau}_i^{0 \rightarrow 1} \quad \text{and} \quad \tau_i^{1 \rightarrow 0} \geq \hat{\tau}_i^{1 \rightarrow 0}\,. 
    \end{equation}
    In particular, we have almost surely for any $t\geq 0$, 
    $$
\sum_{i=0}^\infty \ind_{\cT_i \leq t} \leq \sum_{i=0}^\infty \ind_{\hat{\cT}_i \leq t}\,.
    $$
\end{Lemma}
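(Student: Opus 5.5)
The plan is to construct the coupling excursion by excursion, using the strong Markov property at the successive hitting times of $0$ and $1$ by the distance process. Write $X=\operatorname{dist}(U,V)$ and $\hat X=\operatorname{dist}(\hat U,\hat V)$. Both processes are nearest-neighbour on $\{0,1,\dots,\lfloor N/2\rfloor\}$, up to killing for $X$. The argument rests on the rate comparison listed just before the statement:
\begin{itemize}
\item At distance $0$, $X$ leaves at rate at most $4\kappa\beta_N$, namely $\operatorname{Card}(\mathpzc B\cap[B(U,1)\setminus B(U,0)])\beta_N\le 2\kappa\beta_N$ if $U=V$, and $4\kappa\beta_N$ if $U\neq V$. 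The free walks $\hat U,\hat V$ at distance $0$ leave at rate $4\kappa\beta_N$, since each has $2\kappa$ outgoing edges of rate $\beta_N$ leading to the neighbouring columns.
\item At distance $1$, $X$ goes back to $0$ at rate at most $(2\kappa-1)\beta_N$, while $\hat X$ does so at rate $2\kappa\beta_N$.
\item At distance $\ge 2$, the projections $\iota(U),\iota(V)$ and $\iota(\hat U),\iota(\hat V)$ are independent walks with the same rates.
\end{itemize}

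For the first inequality, $\tau_i^{0\to1}\ge\hat\tau_i^{0\to1}$: starting both distance processes at $0$, an exit from $0$ necessarily goes to $1$ (unless killed, which makes $\tau_i^{0\to1}=\infty$). Conditionally on the past, the exit rate of $X$ is at most that of $\hat X$. I would realise both exit times by thinning one Poisson clock of rate $4\kappa\beta_N$: $\hat X$ jumps at its first ring, and $X$ accepts a ring with probability (current rate)$/4\kappa\beta_N$. The blue configuration may change in between, but the thinning is done with the current rate, so the bound holds pathwise.

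For the second inequality, $\tau_i^{1\to0}\ge\hat\tau_i^{1\to0}$, I would couple the two walks from distance $1$ so that $\hat X\le X$ up to the hitting time of $0$ by $\hat X$. Above level $1$ the two chains have identical transition rates, so they move together by a synchronous coupling. At level $1$ I would use a common clock: downward moves of $\hat X$ (rate $2\kappa\beta_N$) contain, by thinning, the downward moves of $X$ (rate $(2\kappa-1)\beta_N$). Upward moves have the same rate $2\kappa\beta_N$ for both and are synchronised. When $\hat X$ moves down alone, $\hat X$ has hit $0$ while $X\ge1$, which is exactly what we need. Killing of $U$ or $V$ only sets the time to $\infty$. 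Hence $\hat X$ hits $0$ no later than $X$.

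Concatenating the excursions gives $\cT_i\ge\hat\cT_i$ for all $i$, since $\cT_i$ is the sum of the previous $\tau^{0\to1}$ and $\tau^{1\to0}$, plus the fact that $\cT_i$ is at least the time spent at zero. The counting inequality then follows.

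The main obstacle is making the concatenation legitimate. After $\hat\tau^{1\to0}$ the free pair has already restarted at $0$ while $X$ has not. I would therefore build the coupling sequentially: run the free excursion, and when it ends at $0$, let it continue with fresh independent randomness, reusing the strong Markov property of $(\hat U,\hat V)$, whose law depends only on $\hat X$ through the distance. The comparison is then carried out on the increments $\tau_i^{0\to1}$ and $\tau_i^{1\to0}$ separately.

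This identifies the $i$-th excursion of $\hat X$ with the $i$-th excursion of $X$ rather than comparing them in real time. So the conclusion must be phrased through the partial sums, $\cT_i=\sum_{k<i}(\tau_k^{0\to1}+\tau_k^{1\to0})$ compared term by term with the hatted sums, rather than as a pathwise ordering of the two trajectories.
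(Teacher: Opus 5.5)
Your proposal is correct and follows essentially the paper's argument. Both reduce to a single excursion via the Markov property, compare the exit rate from $0$ (at most $4\kappa\beta_N$, and at most $2\kappa\beta_N$ when $U=V$) and the return rate from $1$ (at most $(2\kappa-1)\beta_N$ against $2\kappa\beta_N$), and use identical dynamics above level $1$. The paper phrases the level-$1$ step as a geometric number of attempts, with dominated waiting times ${\rm Exp}((4\kappa-1)\beta_N)\succeq{\rm Exp}(4\kappa\beta_N)$ and success probability $\frac{2\kappa-1}{4\kappa-1}\le\frac12$, where you use a direct thinning coupling; you also spell out the excursion-by-excursion concatenation that the paper leaves to the Markov property.
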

Lemma \ref{lemma: RHS step 3 Bound on the number of excursion} is then a direct consequence of Lemma \ref{lemma: bounds on the number of excursion} and the following lemma, which controls the expected number of excursions of $\operatorname{dist}(\hat U, \hat V)$, thanks to the local central limit theorem.
\begin{Lemma}[{\cite[Lemma 49]{HQ}}]
   For any $t\ge 0$, there exists $c_{10}=c_{10}(t)>0$ such that  
    $$
    \bbE\left( \sum_{i \geq 0} \mathds{1}_{\hat{\cT}_i \le t}\right) \leq c_{10} \sqrt{\beta_N}.
    $$
\end{Lemma}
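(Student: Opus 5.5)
The statement to prove is the last lemma (\cite[Lemma 49]{HQ}): $\hat U,\hat V$ are independent random walks on $\bbG_{N,\kappa}$ with all edge rates $\beta_N$, and we must bound the expected number of returns of $\operatorname{dist}(\hat U,\hat V)$ to zero before time $t$ by $c_{10}\sqrt{\beta_N}$. The plan is to project onto the torus and reduce everything to a single continuous-time random walk.

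\textbf{Step 1: projection.} The projections $\iota(\hat U),\iota(\hat V)$ are independent simple random walks on $\T_N$ with jump rate $\kappa\beta_N$ to each neighbour. Hence the difference $X=\iota(\hat U)-\iota(\hat V)$ is a symmetric simple random walk on $\T_N$ with total jump rate $4\kappa\beta_N$. The quantity $\operatorname{dist}(\hat U,\hat V)$ is exactly $|X|$ in the torus metric. So $\hat\cT_i$ is the $i$-th entrance time of $X$ into $\{0\}$.

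\textbf{Step 2: count returns via occupation time.} Each visit of $X$ to $0$ lasts an $\operatorname{Exp}(4\kappa\beta_N)$ holding time. If a visit starts before $t$, the strong Markov property gives
\[
\bbE\big[\text{time at } 0 \text{ during } [\hat\cT_i,\hat\cT_i+1]\,\big|\,\hat\cT_i\le t\big]\ge \frac{c}{\beta_N}.
\]
Summing over $i$ yields
\[
\bbE\Big[\sum_i \ind_{\hat\cT_i\le t}\Big]\le C\beta_N\int_0^{t+1}\bbP(X_s=0)\,\dd s .
\]

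\textbf{Step 3: local CLT / heat kernel bound.} We need $\bbP(X_s=0)\le C\big((\beta_N s)^{-1/2}\wedge 1\big)+C/N$ uniformly on $[0,t+1]$. To get it, unfold the torus to $\mathbb Z$: the return probability on $\T_N$ equals $\sum_k \bbP(Y_s=kN)$ for a walk $Y$ on $\mathbb Z$. The local CLT, or the explicit Bessel-function bound $\bbP(Y_s=j)\le C(\beta_N s)^{-1/2}$, together with Gaussian tails handles the sum over $k$. This gives a bound $C(\beta_N s)^{-1/2}+C\sqrt{\beta_N s}/N^2$, the second term being negligible for $\beta_N=N^2$ and $s\le t+1$. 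For general $\beta_N$ the wrap-around term is only $O(1)$ when $\beta_N s\gg N^2$. One then uses the uniform mixing bound $\bbP(X_s=0)\le C/\sqrt{\beta_N s}+C/N$. The resulting contribution $\beta_N t/N$ is dominated by $\sqrt{\beta_N}$ exactly when $\beta_N\lesssim N^2$. This is consistent with the regime in which the lemma is applied, so I would state the bound for $\beta_N=N^2$, or more generally $\beta_N=O(N^2)$.

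\textbf{Step 4: integrate.} Integrating Step 3, $\beta_N\int_0^{t+1}(\beta_N s)^{-1/2}\,\dd s=2\sqrt{\beta_N(t+1)}$, which is the claim with $c_{10}$ depending on $t$.

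The main obstacle is the uniform heat-kernel estimate in Step 3, namely controlling the periodic wrap-around on $\T_N$ uniformly in $s$. The rest is routine renewal and strong Markov bookkeeping.
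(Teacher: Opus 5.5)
Your proposal is correct and follows essentially the route the paper intends. The paper gives no proof of its own and simply invokes \cite[Lemma 49]{HQ} via the local central limit theorem. That corresponds to your reduction to the occupation time at $0$ (summing the disjoint sojourns at $0$, each of mean order $1/\beta_N$), combined with the bound $\bbP(X_s=0)\le C\big((\beta_N s)^{-1/2}\wedge 1\big)+C/N$ on $\T_N$.

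Your caveat is also right and worth keeping. The bound genuinely needs $\beta_N=O(N^2)$: for $\beta_N\gg N^2$ the expected number of returns is of order $\beta_N t/N\gg\sqrt{\beta_N}$. The statement should therefore be read in the diffusive regime $\beta_N=N^2$ of \cite{HQ} and of Theorem \ref{thm: Upper bound}, a hypothesis the paper leaves implicit.
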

We finish this section by proving Lemma \ref{lemma: bounds on the number of excursion}. 
\begin{proof}[Proof of Lemma \ref{lemma: bounds on the number of excursion}]
Since $(\mathpzc B, U, V)$ is a Markov process, it is sufficient to prove that there exists a coupling such that almost surely 
\begin{equation}
        \label{eq : exursion inequality step 1}
        \tau_0^{0 \rightarrow 1} \succeq \hat{\tau}_0^{0 \rightarrow 1} \quad \text{and} \quad \tau_0^{1 \rightarrow 0} \succeq \hat{\tau}_0^{1 \rightarrow 0}\,,
\end{equation}
as long as $  \operatorname{dist}( U, V)(0)=\operatorname{dist}(\hat U, \hat V)(0)=0$. Let us first focus on the first inequality. First, observe that if $U$ and $V$ are killed before their first separation, the RHS inequality in \eqref{eq : exursion inequality step 1} is trivial. We distinguish between the two following cases 
\begin{itemize}
    \item If $U(0)\neq V(0)$, either $\tau_0^{0 \rightarrow 1}=\infty$ because $U$ and $V$ are killed or it is equal to the first time $U$ or $V$ encounter an exclusion or Hesitating exclusion mark. Therefore, $$\tau_0^{0 \rightarrow 1}  \overset{\cL}{=} \infty \ind_{U(\tau^{0\rightarrow})= \dagger} + S \ind_{U(\tau^{0\rightarrow})\neq \dagger}$$
    where $S \sim {\rm Exp}(4 \kappa \beta_N)$. The result follows since  $\hat\tau_0^{0 \rightarrow 1} \sim {\rm Exp}(4 \kappa \beta_N)$.
    \item  If $U(0) =  V(0)$, the only way for $U$ and $V$ to separate is to encounter an Hesitating exclusion mark such that the opposite vertex is a blue position. Moreover, one needs to succeed and the other to fail. Therefore, by dropping the condition on the opposite vertex, we obtain 
    $$\tau_0^{0 \rightarrow 1} \succeq \tilde \tau$$
    where $\tilde \tau$ is the first time $U$ and $V$ encounter a Hesitating exclusion mark and make different choices or is killed. We notice that 
     $$\tilde \tau \overset{\cL}{=} \infty \ind_{U(\tau^{0\rightarrow})= \dagger} + S \ind_{U(\tau^{0\rightarrow})\neq \dagger}$$
   where $S\overset{\cL}{=} {\rm Exp}(2\kappa \beta_N)$.
     Whereas, we have $ \hat\tau_0^{0 \rightarrow 1} \sim  {\rm Exp}(4 \kappa\beta_N)$. 
\end{itemize}  
Let us now prove the second inequality in \eqref{eq : exursion inequality step 1}.   We notice that for $\operatorname{dist}( U, V)$ (resp. $\operatorname{dist}(\hat U, \hat V)$),to reach state zero, 
\begin{itemize}
    \item It first waits an exponential law with parameter $(4\kappa - 1)\beta_N$ (resp.  $4\kappa\beta_N$) to see a change.
    \item Either, we successfully decrease with probability $\frac{2\kappa- 1}{4\kappa-1}$ (resp. $\frac 1 2$)
    \item Or, it increases. In this case, we need to wait for the distance to become again equal to $1$, which will take time $\tau_0^{2 \rightarrow 1}$ (resp. $\hat \tau_0^{2 \rightarrow 1}$) and then, attempt again.
\end{itemize}
Finally, we observe that 
$$
  \tau_0^{2 \rightarrow 1} \overset{d}{=} \hat{\tau}_0^{2 \rightarrow 1}
$$
and we have the following stochastic domination $$ \cB\left(\frac{1}{2} \right) \succeq \cB\left(\frac{2\kappa- 1}{4\kappa-1} \right)  \quad \text{and} \quad {\rm Exp}\left( (4\kappa-1)\beta_N \right) \succeq {\rm Exp}\left(4\kappa\beta_N \right)\,.$$ This concludes the proof. 
\end{proof}

\subsection{Proof of  Lemma \ref{lemma: RHS- one red control- step 1}}
We bound each term of the RHS in \eqref{eq : RHS- one red control- step 1} separately.
Let us observe that both
\begin{align*}
    \sum_{\substack{u \in V_{N,\kappa}\,,\\s \in \Xi^G_u \cap [0,t] }} \ind_{U_{s}=V_{s}\in B(u,m)} \qquad \text{and} \qquad \sum_{\substack{e \in E_{N,\kappa}\,,\\s \in \Xi^C_e \cap [0,t] }} \ind_{U_{s}=V_{s}\in B(e,0)}
\end{align*}
are sums over marks of Poisson Point Processes. Therefore, Campbell formula \cite[Proposition 2.7]{LP18} implies 
\begin{align*}
    \bbE\left(\sum_{\substack{u \in V_{N,\kappa}\,,\\s \in \Xi^G_u \cap [0,t] }} \ind_{U_{s}=V_{s}\in B(u,m)} \right) &\leq \upsilon_+  \int_0^t \sum_{u \in V_{N,\kappa}} \Esp{\ind_{U_{s}=V_{s}\in B(u,m)}}  \dd s\\
   & \leq (2m+1)\kappa \upsilon_+  \int_0^t  \Esp{\ind_{U_{s}=V_{s}\neq \dagger}} \dd s\,,
\end{align*}
where $\upsilon_+ =\sum_{j=-q_-}^{q_+} \lambda_j$. Similarly, we obtain 
\begin{align*}
    \bbE\left(\sum_{\substack{e \in E_{N,\kappa}\,,\\s \in \Xi^C_e \cap [0,t] }} \ind_{U_{s}=V_{s}\in e}\right) &\leq \bar b  \int_0^t \sum_{e \in E_{N,\kappa}} \Esp{\ind_{U_{s}=V_{s}\in B(e,m)}}  \dd s \leq 2 \kappa \bar b \int_0^t  \Esp{\ind_{U_{s}=V_{s}\neq \dagger}} \dd s 
\end{align*}
Finally, we observe that $\Prob{U_t=V_t\neq \dagger \rvert \widehat \Xi}$ is non-increasing, where $\widehat \Xi$ is the common background process defined in Subsection \ref{subsec: A single red perturbation}.

This implies
\begin{equation}
    \Prob{U_t=V_t\neq \dagger} \leq \frac{1}{t} \int_0^t  \Prob{U_s=V_s\neq \dagger}\,,
\end{equation}
which yields Lemma \ref{lemma: RHS- one red control- step 1}.

\begin{appendices}
    \section{Control of $\sR$}
    \label{section-appendix: control of Red}
    In this section, we are interested in the expected number of red positions. We recall that we defined the red position in Subsection \ref{subsection: RGB}  as 
    \begin{align*}
         \mathscr{R}(\ft)
         &=\left\{v \in V_{N,\kappa}; \text{ such that } \quad  \fD_{\ft} \cap \mathscr{H}_v(\ft) \neq \emptyset \right\}\\
         &=\{u \in V_{N,\kappa}; \text{ such that } \quad \wedge_ \ft(u) \neq \vee_\ft(u)\},.
    \end{align*}
    We denote by 
    \begin{equation}
        \mathpzc{r}_\ft=\operatorname{Card}(\mathscr{R}(\ft)) = \sum_{u \in V_{N,\kappa}} \wedge_\ft(u) -  \vee_\ft(u) \, .
    \end{equation}
    Let us start by proving  Lemma \ref{lemma: control of REDs exp control}. 
    \begin{proof}[Proof of Lemma \ref{lemma: control of REDs exp control}]
        Fix $c_1,c_2>0$ that we will explicitly later.
        It is sufficient to prove for any $u \in V_{N,\kappa}$
       $$  e^{-c_1 \ft} \, \Esp{\wedge_\fs(u) -  \vee_\fs(u)} \, \leq \,\Esp{\wedge_{\ft+ \fs}(u) -  \vee_{\ft+ \fs}(u)}\, \leq \, e^{-c_2 \ft }  \, \Esp{\wedge_\fs(u) -  \vee_\fs(u)} \,.$$
     where $\vee, \wedge$ are constructed as in \eqref{eq: vee and wedge construction}. 
     Regarding the lower bound, we construct two independent BEP $(\fC^\prime_ {\fu})_{\fu \in  [0,\fs]}$ and $(\fC_ {\fu})_{\fu \in  [0,\ft+\fs]}$. 
     
     On the event $u$ is not killed during the $(0,\ft)$ in $(\fC_ {\fu})_{\fu \in  [0,\ft]}$, we have 
        $$
        \wedge_{\ft+\fs}(u) \overset{\cL}{=} \wedge^\prime_{\fs}(\ell_\ft(u)) \quad \text{and} \quad  \vee_{\ft+\fs}(u) \overset{\cL}{=} \vee^\prime_{\fs}(\ell_\ft(u))\,,
        $$
    where $(\wedge^\prime,\vee^\prime)$ are constructed through $(\fC^\prime_ {\fu})_{\fu \in  [0,\fs]}$, $\ell_\ft(u)$ is the position of the datum $u$ in $(\fC_ {\fu})_{\fu \in  [0,\ft]}$.   Notice also that $u$ is killed with rate at most $ 2 \bar b \kappa+ (2m+1)\kappa\upsilon_+ +\upsilon_-$, which imply 
    $$
    \Prob{{\rm Exp}( 2 \bar b \kappa+ (2m+1)\kappa\upsilon_+ +\upsilon_-) > \ft} \min_{v}\Esp{\wedge^\prime_\fs(v) -  \vee^\prime_\fs(v)} \leq \Esp{\wedge_{\ft+ \fs}(u) -  \vee_{\ft+ \fs}(u)}\,.
    $$
    This concludes the proof of the lower bound with $c_1= 2 \bar b + (2m+1)\kappa\upsilon_+ + \upsilon_-$. We now turn to the proof of the upper bound.  The expression of the expanded process generator \eqref{def-eq: generator of the exapnded process} implies 
    \begin{align*}
        \partial_\ft \Esp{\wedge_\ft(0)-\vee_ \ft(0)}= & - \sum_{j= -q_- -1} ^{-1}  \lambda_ j \bbE\left[\wedge_ \ft(0)f_j(\wedge_\ft) - \vee_\ft(0)f_j(\vee_\ft)\right]\\
        & +  \sum_{j= 1 } ^{q_++1}  \lambda_j \bbE\left[(1 -\wedge_\ft(0))f_j(\wedge_\ft) - (1-\vee_ \ft(0))f_j(\vee_ \ft)\right]\,
    \end{align*}
    where we used the transitivity of the graph $\bbG_{N,\kappa}$ to get rid of the terms associated with the misanthrope parts $\cA_{C,L}$ and $\cA_{C,R}$. Let us consider each term seperately
    \begin{itemize}
        \item For $j=-q_--1$, we have 
         $$\bbE\left[\wedge_ \ft(0)f_j(\wedge_\ft) - \vee_\ft(0)f_j(\vee_\ft)\right] =  \Esp{\wedge_\ft(0)-\vee_ \ft(0)}$$
        \item Similarly, for $j=-q_++ 1$, we have 
         $$\bbE\left[(1 -\wedge_\ft(0))f_j(\wedge_\ft) - (1 -\vee_\ft(0))f_j(\vee_\ft)\right] =  - \Esp{\wedge_\ft(0)-\vee_ \ft(0)}\,.$$
        \item  For $0>j>-q_--1$, we have
        $$\bbE\left[\wedge_ \ft(0)f_j(\wedge_\ft) - \vee_\ft(0)f_j(\vee_\ft)\right] \geq   \Esp{\ind_{\wedge_ \ft(0)=\vee_\ft(0)=1}(f_j(\wedge_\ft) - f_j(\vee_\ft))}\,,$$
        because $\wedge_\ft(0) \geq \vee_\ft(0)$, almost surely. 
        Notice also 
        $$
        \Esp{\ind_{\wedge_ \ft(0)=\vee_\ft(0)=1}(f_j(\wedge_\ft) - f_j(\vee_\ft)) }\geq - ((2m+1)\kappa- 1) \Esp{(\wedge_\ft(0) - \vee_\ft(0))}\,.
        $$
        \item For $0<j<-q_++1$, we have
        $$\bbE\left[(1 -\wedge_\ft(0))f_j(\wedge_\ft) - (1 -\vee_\ft(0))f_j(\vee_\ft)\right] \leq  \Esp{\ind_{\wedge_ \ft(0)=\vee_\ft(0)=0}(f_j(\wedge_\ft) - f_j(\vee_\ft))}\,,$$
       because $\wedge_\ft(0) \geq \vee_\ft(0)$, almost surely. Notice also 
        $$
        \Esp{\ind_{\wedge_ \ft(0)=\vee_\ft(0)=0}(f_j(\wedge_\ft) - f_j(\vee_\ft)) }\leq ((2m+1)\kappa- 1) \Esp{(\wedge_\ft(0) - \vee_\ft(0))}\,.
        $$
    \end{itemize}
    In the end, we obtain
    \begin{align*}
        \partial_\ft \Esp{\wedge_\ft(0)-\vee_ \ft(0)}\leq & [- \upsilon_- + ((2m+1)\kappa-1) \upsilon_+ ] \Esp{\wedge_\ft(0)-\vee_ \ft(0)}\,.
    \end{align*}
   { This implies the upper-bound for $c_2=\upsilon_- - ((2m+1)\kappa-1) \upsilon_+$ thanks to \eqref{cond: trivial cluster size} or \eqref{cond: condition general cluster size}.}
    \end{proof}

    Finally, we prove Lemma \ref{lemma: technical lemma on R}. 

    \begin{proof}[Proof of Lemma \ref{lemma: technical lemma on R}]
        Let $o \in\{0,1\}$. By doing the same computation as in the previous proof and using attractivity, we obtain
        $$
    \forall \alpha \in [0,\kappa] , \,\,\, R^\prime(\alpha) < 0\,.
    $$
    Therefore, $R$ admits a unique zero on $[0,\kappa]$ denoted by $\alpha_\star$. This implies that $\alpha_o$ converges to $\alpha_\star$. Now, we also have:
    \begin{align*}
        \frac{\partt (\alpha_o -\alpha_\star)}{\alpha_o -\alpha_\star}= \frac{R(\alpha_o)- R(\alpha_\star)}{\alpha_o -\alpha_\star} \,,
    \end{align*}
    which converge to $R^\prime(\alpha_\star)$ as $t$ goes to infinity. The result then follows from the Gronwall Lemma.
    \end{proof}
    
    \section{Anticoncentration lemma}
    \label{section anticoncentation}
    In this section, we are interested in $(U_1,U_2)$, the occupied position for an exclusion process started with two particles at $u_1$ and $u_2$. 
    
    \begin{Lemma}
        \label{antioncentration lemma} 
        Let $\vartheta>0$ and $\zeta \sim {\rm Exp}\left(\vartheta\right)$,
        independent of $(U_1,U_2)$. Then, for any $k \in \mathbb{Z}_{+}$,
    $$
\max _{u_1, u_2} \mathbb{P}_{u_1, u_2}\left[\operatorname{dist}\left(U_1(\zeta), U_2(\zeta)\right) \leq k\right] \leq c_{\kappa} \left[\frac{ N}{\beta_N}+ \vartheta \frac{1}{N}\right]\,.
$$
\end{Lemma}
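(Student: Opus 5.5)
The plan is to reduce to a heat-kernel bound for the projected pair on $\T_N$, integrate it against the exponential law of $\zeta$, and split the resulting $\sqrt{\vartheta/\beta_N}$ term by the arithmetic–geometric mean inequality. That split is what puts $\vartheta$ on the $1/N$ term and leaves $N/\beta_N$ free of $\vartheta$. The constant $c_\kappa$ is allowed to depend on $k$ (and on $\kappa$) but not on $N$, $\beta_N$, $u_1$, $u_2$.

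\textbf{Step 1 (heat-kernel bound).} Since $\operatorname{dist}(U_1,U_2)=\operatorname{dist}_{\T_N}(\iota(U_1),\iota(U_2))$, only the projected pair $(X_1,X_2)=(\iota(U_1),\iota(U_2))$ matters. Away from contact, i.e.\ when $\operatorname{dist}(X_1,X_2)\ge 2$, the two particles of the exclusion process on $\bbG_{N,\kappa}$ move independently. Hence $X_1,X_2$ behave as independent nearest-neighbour walks on $\T_N$ with rate $\kappa\beta_N$ per edge, exactly as computed in the proof of Lemma \ref{lemma: RHS step 3 Bound on the number of excursion}. At distances $0$ and $1$ the jump rates of $D=X_1-X_2$ are modified only by bounded multiplicative factors depending on $\kappa$ (same rates as listed there). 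I aim for the uniform estimate
\begin{equation*}
\max_{u_1,u_2}\bbP_{u_1,u_2}\left[\operatorname{dist}(U_1(s),U_2(s))\le k\right]\le C_{\kappa,k}\left(\frac1N+\min\Big(1,\frac{1}{\sqrt{\beta_N s}}\Big)\right),\qquad s>0.
\end{equation*}
To prove it, I would compare $D$ with the difference $\hat D$ of two independent walks. The natural tool is the excursion coupling of Lemma \ref{lemma: bounds on the number of excursion}, which shows that the perturbation at $\{0,\pm1\}$ only slows down the visits near the origin. Then I would apply the local CLT on the cycle, $\bbP[\hat D_s=x]\le C(1/N+1/\sqrt{\beta_N s})$, as in \cite[Lemma 49]{HQ}, and sum over the $2k+1$ admissible values of $x$. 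Alternatively, one can write the Green-function/renewal decomposition of $D$ at its returns to $\{0,\pm1\}$ and use that the holding rates there are comparable to $\beta_N$.

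\textbf{Step 2 (integration against $\zeta$).} Since $\zeta$ is independent of $(U_1,U_2)$,
\begin{equation*}
\bbP[\operatorname{dist}(U_1(\zeta),U_2(\zeta))\le k]\le \frac{C}{N}+C\int_0^\infty \vartheta e^{-\vartheta s}\frac{\dd s}{\sqrt{\beta_N s}}=\frac{C}{N}+C\sqrt{\pi}\sqrt{\frac{\vartheta}{\beta_N}}.
\end{equation*}
The key observation is then $\sqrt{\vartheta/\beta_N}=\sqrt{(\vartheta/N)(N/\beta_N)}\le \tfrac12\big(\vartheta/N+N/\beta_N\big)$. This produces exactly the two terms of the statement.

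\textbf{Step 3 (the $C/N$ term).} The remaining term $C/N$ is absorbed into $c_\kappa\,\vartheta/N$ as long as $\vartheta$ is bounded below. This holds in the application in Lemma \ref{lemma: success of the coupling}, where $\vartheta=2\bar b+2\upsilon_+$ is fixed. Alternatively it is absorbed into $c_\kappa N/\beta_N$ when $\beta_N\le N^2$. I would record this dependence in $c_\kappa$.

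\textbf{Main obstacle.} The main obstacle is Step 1, namely obtaining the heat-kernel bound uniformly in the starting points despite the exclusion interaction. In particular, the walk of $D$ can be trapped at distance $0$ (same site $x$, different boxes) or at distance $1$ at the modified rates. I would first try the domination by independent walks via Lemma \ref{lemma: bounds on the number of excursion}. If that does not directly control occupation probabilities, I would bound the expected time spent in $\{|D|\le k\}$ by a renewal argument: the number of returns to the origin is controlled by the local CLT, and each visit lasts $O(1/\beta_N)$.
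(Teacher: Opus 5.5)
Your Steps 2 and 3 are correct. The integral $\int_0^\infty\vartheta e^{-\vartheta s}(\beta_N s)^{-1/2}\,\dd s=\sqrt{\pi\vartheta/\beta_N}$ is right, and the AM--GM split produces the two terms. Your caveat about the $C/N$ term is also justified: for fixed $N$, letting $\vartheta\to0$ with $\beta_N\gg N^2$, the left-hand side tends to a quantity of order $1/N$ while the right-hand side tends to $c_\kappa N/\beta_N$. It is harmless in the only application, where $\beta_N=N^2$ and $\vartheta=2\bar b+2\upsilon_+$.

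\textbf{The gap is in Step 1.} That step is the whole content of the lemma, and the tool you propose for it does not do the job. Lemma \ref{lemma: bounds on the number of excursion} concerns the tracers $U,V$ of Section \ref{section: Proof of Propostion the control of one error and lcocalisation}, which involve blue sites and hesitating marks, not two exclusion particles. The rates you import from the proof of Lemma \ref{lemma: RHS step 3 Bound on the number of excursion} are tracer rates. Moreover, that lemma compares the hitting times $0\to1$ and $1\to0$, so it controls the \emph{number} of excursions. It says the tracers leave distance $0$ more slowly than independent walks, which is the wrong direction for bounding $\bbP[\operatorname{dist}\le k]$ at a given time; this is why Lemma \ref{lemma : RHS step 2 Bound on the exiting time} is needed separately there. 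Your fallback renewal argument would still require comparing the returns of the interacting pair with those of independent walks, so it does not remove the difficulty.

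\textbf{The missing observation.} It is elementary and makes the comparison pathwise. By the symmetry of $\bbG_{N,\kappa}$, $\operatorname{dist}(U_1,U_2)$ is itself a birth--death chain on $\{0,\dots,\lfloor N/2\rfloor\}$, and its rates compare with those of independent walks as follows.
\begin{itemize}
\item At distance $0$ (same site, different boxes): $\bbG_{N,\kappa}$ has no edges inside a site, so both particles jump freely. The exit rate is $4\kappa\beta_N$, exactly as for independent walks, so the trapping you worry about does not occur for this process.
\item At distance $1$: the rate to $2$ is $2\kappa\beta_N$, as for independent walks. The rate to $0$ is $2(\kappa-1)\beta_N$ instead of $2\kappa\beta_N$, because the two jumps onto the other particle's box are suppressed.
\item At distance $\ge 2$: the rates coincide.
\end{itemize}
So the exclusion constraint only lowers one downward rate. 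The standard monotone coupling of nearest-neighbour birth--death chains then gives, almost surely for all $s\ge0$, $\operatorname{dist}(U_1(s),U_2(s))\ge\operatorname{dist}(\hat U_1(s),\hat U_2(s))$, where $\hat U_1,\hat U_2$ are independent walks started at $u_1,u_2$.

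This is exactly the paper's proof. It then concludes with the anticoncentration bound for independent walks \cite[Lemma 51]{HQ}, which your Steps 2--3 reprove by hand. With this coupling replacing Step 1, your argument is complete, up to the dependence of the constant on a lower bound for $\vartheta$ that you already noted.
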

\begin{proof}
As in \cite[Lemma 21]{HQ}, we couple the evolution $(U_1,U_2)$ with two independent random walk  $(\Hat{U_1},\Hat{U_2})$ such that almost surely for any $s\geq0$,
$$ \operatorname{dist}\left(U_1(s), U_2(s)\right)  \geq \operatorname{dist}\left(\hat U_1(s),\hat U_2(s)\right)\,. $$

To this end, it is sufficient to specify the expression of the generator on the configuration  satisfying $\operatorname{dist}\left(u_1, u_2\right)  \geq \operatorname{dist}\left(\hat u_1,\hat u_2\right)$
\begin{itemize}
    \item For $\operatorname{dist}\left(u_1, u_2\right)  > \operatorname{dist}\left(\hat u_1,\hat u_2\right)$, the processes evolve independently.
    \item For $\operatorname{dist}\left(u_1, u_2\right)  = \operatorname{dist}\left(\hat u_1,\hat u_2\right)$. Let $u_1=(x_1,j_1)$ and $u_2=(x_2,j_2)$ (and similarly, for $\hat u_1$ and $\hat u_2 $ ). We observe that there exist $z\in\{\pm 1\}$ such that $x_1-x_2= z (\hat x_1- \hat x_2)$. We fix the jump rates as follows
    \begin{itemize}
        \item If $\operatorname{dist}\left(u_1, u_2\right)  = \operatorname{dist}\left(\hat u_1,\hat u_2\right)\neq 1$\footnote{In particular, it includes the case $\operatorname{dist}\left(u_1, u_2\right)  = \operatorname{dist}\left(\hat u_1,\hat u_2\right)= 0$}, we choose for any $j \in[\kappa]$  
    \begin{align*}
          &\cA^{\rm couple}(((u_1,u_2),(\hat u_1,  \hat u_2)); ((u_1\pm (1,j),u_2), (\hat u_1 \pm z (1,j), \hat u_2)))\\
          &=   \cA^{\rm couple}(((u_1,u_2),(\hat u_1,  \hat u_2)); ((u_1,u_2\pm (1,j)), (\hat u_1, \hat u_2\pm z (1,j)))) = \beta_N
    \end{align*}
   where for $u=(x,i)$, we used the notation $u\pm (1,j)= (x\pm 1,j)$.
        \item If $\operatorname{dist}\left(u_1, u_2\right)  = \operatorname{dist}\left(\hat u_1,\hat u_2\right)=1$, let us assume $x_2=x_1+1$. We choose for any $j \in[\kappa] \setminus \{j_2\}, l \in [\kappa]$ and $k \in   [\kappa] \setminus \{j_1\}$
    \begin{align*}
          &\cA^{\rm couple}(((u_1,u_2),(\hat u_1,  \hat u_2)); ((u_1 + (1,j),u_2), (\hat  u_1 + z (1,j), \hat u_2)))\\
          &=   \cA^{\rm couple}(((u_1,u_2),(\hat u_1,  \hat u_2)); ((u_1,u_2-  (1,k)), (\hat u_1, \hat u_2- z (1,k)))) \\
           &=\cA^{\rm couple}(((u_1,u_2),(\hat u_1,  \hat u_2)); ((u_1 - (1,l),u_2), (\hat u_1 - z (1,l), \hat u_2)))\\
          &=   \cA^{\rm couple}(((u_1,u_2),(\hat u_1,  \hat u_2)); ((u_1,u_2 + (1,l)), (\hat u_1, \hat u_2+ z (1,l))))= \beta_N
    \end{align*}
    We also impose
    \begin{align*}
          & \cA^{\rm couple}(((u_1,u_2),(\hat u_1,  \hat u_2)); (u_1,u_2), (\hat  u_1 , \hat u_1))= \beta_N\\
           & \cA^{\rm couple}(((u_1,u_2),(\hat u_1,  \hat u_2)); (u_1,u_2), (\hat  u_2 , \hat u_2))= \beta_N\,.
    \end{align*}
    We proceed similarly if $x_1=x_2+1$.
    \end{itemize} 
\end{itemize}
One can check that such a coupling exists and preserve the order of the distances. We omit its expression because it is fastidious.
Finally, we observe that $(\operatorname{dist}(\hat U_1(s), \hat U_2(s)))$ has the same law as $(\operatorname{dist}_ {\T_N}(X_1(\kappa s), X_2(\kappa s)))$ where $X_1$ and $X_2$ are two independent random walk on $\T_N$ where every edge has conductance $\beta_N$. Therefore, the anticoncentration of the random walk \cite[Lemma 51]{HQ} implies the result.
\end{proof}
\end{appendices}

\section{Acknowledgments} 
The author thanks Max Fathi for his guidance and advices throughout this project and for carefully reading several versions of the manuscript. The author also warmly kindly thanks Cyril Labbé, Ellen Saada, Justin Salez and Hong Quan Tran for helpful discussions. 

This work received funding from the Agence Nationale de la Recherche (ANR) Grant
ANR-23-CE40-0003 (Project CONVIVIALITY), as well as funding from the Institut Universitaire de France.

An AI assistant was used \emph{solely} for language editing (grammar, spelling and phrasing). The author takes full responsibility for the content of this article.
\bibliographystyle{plain}
\bibliography{biblio}

\begin{thebibliography}{10}

\bibitem{Aldous1986SHUFFLINGCA}
D.~Aldous and P.~Diaconis.
\newblock Shuffling cards and stopping times.
\newblock {\em Amer. Math. Monthly}, 93(5):333--348, 1986.

\bibitem{Diaconis_Aldous_86}
David Aldous and Persi Diaconis.
\newblock Shuffling cards and stopping times.
\newblock {\em Am. Math. Mon.}, 93:333--348, 1986.

\bibitem{bahadoran2011hydrodynamics}
C.~Bahadoran.
\newblock Hydrodynamics and hydrostatics for a class of asymmetric particle
  systems with open boundaries.
\newblock {\em Comm. Math. Phys.}, 310(1):1--24, 2012.

\bibitem{BGRS_euler}
C.~Bahadoran, H.~Guiol, K.~Ravishankar, and E.~Saada.
\newblock Euler hydrodynamics for attractive particle systems in random
  environment.
\newblock {\em Ann. Inst. Henri Poincar{\'e}, Probab. Stat.}, 50(2):403--424,
  2014.

\bibitem{BGRT_constructive_euler_review}
Christophe Bahadoran, Herv{\'e} Guiol, Krishnamurthi Ravishankar, and Ellen
  Saada.
\newblock Constructive {Euler} hydrodynamics for one-dimensional attractive
  particle systems.
\newblock In {\em Sojourns in probability theory and statistical physics. III.
  Interacting particle systems and random walks, a festschrift for Charles M.
  Newman}, pages 43--89. Singapore: Springer; Shanghai: NYU Shanghai, 2019.

\bibitem{benjamini2002mixingtimesbiasedcard}
I.~Benjamini, N.~Berger, C.~Hoffman, and E.~Mossel.
\newblock Mixing times of the biased card shuffling and the asymmetric
  exclusion process.
\newblock {\em Trans. Amer. Math. Soc.}, 357(8):3013--3029, 2005.

\bibitem{carinci_giardina_GR_duality_for_stochastic}
Gioia Carinci, Cristian Giardin{\`a}, Claudio Giberti, and Frank Redig.
\newblock Duality for stochastic models of transport.
\newblock {\em J. Stat. Phys.}, 152(4):657--697, 2013.

\bibitem{CGR_exact_formula}
Gioia Carinci, Cristian Giardin{\`a}, and Frank Redig.
\newblock Exact formulas for two interacting particles and applications in
  particle systems with duality.
\newblock {\em Ann. Appl. Probab.}, 30(4):1934--1970, 2020.

\bibitem{Chen_Sau_higher_HL}
Joe~P. Chen and Federico Sau.
\newblock Higher-order hydrodynamics and equilibrium fluctuations of
  interacting particle systems.
\newblock {\em Markov Process. Relat. Fields}, 27(3):339--380, 2021.

\bibitem{misanthrope}
Christiane Cocozza-Thivent.
\newblock Processus des misanthropes.
\newblock {\em Z. Wahrscheinlichkeitstheor. Verw. Geb.}, 70:509--523, 1985.

\bibitem{de_masi_ferrari_lebowitz}
A.~de~Masi, P.~A. Ferrari, and J.~L. Lebowitz.
\newblock Reaction-diffusion equations for interacting particle systems.
\newblock {\em J. Stat. Phys.}, 44:589--644, 1986.

\bibitem{Schiavo_Portinale_Sau_HL_duality}
Lorenzo Dello~Schiavo, Lorenzo Portinale, and Federico Sau.
\newblock Scaling limits of random walks, harmonic profiles, and stationary
  nonequilibrium states in {Lipschitz} domains.
\newblock {\em Ann. Appl. Probab.}, 34(2):1789--1845, 2024.

\bibitem{Diaconis1981-yy}
P.~Diaconis and M.~Shahshahani.
\newblock Generating a random permutation with random transpositions.
\newblock {\em Z. Wahrsch. Verw. Gebiete}, 57(2):159--179, 1981.

\bibitem{DLP_glauber_complete_graph}
Jian Ding, Eyal Lubetzky, and Yuval Peres.
\newblock The mixing time evolution of {Glauber} dynamics for the mean-field
  {Ising} model.
\newblock {\em Commun. Math. Phys.}, 289(2):725--764, 2009.

\bibitem{eymard_roussignom_tordeux}
R.~Eymard, M.~Roussignol, and A.~Tordeux.
\newblock Convergence of a misanthrope process to the entropy solution of {1D}
  problems.
\newblock {\em Stochastic Processes Appl.}, 122(11):3648--3679, 2012.

\bibitem{FGD_mass_migration}
Lucie Fajfrov{\'a}, Thierry Gobron, and Ellen Saada.
\newblock Invariant measures of mass migration processes.
\newblock {\em Electron. J. Probab.}, 21:52, 2016.
\newblock Id/No 60.

\bibitem{FLoreani_redig_sau_hydrodynamcis_sep_re}
Simone Floreani, Frank Redig, and Federico Sau.
\newblock Hydrodynamics for the partial exclusion process in random
  environment.
\newblock {\em Stochastic Processes Appl.}, 142:124--158, 2021.

\bibitem{floreani_redig_sau_orthogonal_duality}
Simone Floreani, Frank Redig, and Federico Sau.
\newblock Orthogonal polynomial duality of boundary driven particle systems and
  non-equilibrium correlations.
\newblock {\em Ann. Inst. Henri Poincar{\'e}, Probab. Stat.}, 58(1):220--247,
  2022.

\bibitem{franceschini2023nonequilibriumfluctuationssepalphaopen}
C.~Franceschini, P.~Gon{\c{c}}alves, M.~Jara, and B.~Salvador.
\newblock Non-equilibrium fluctuations for {{\(\operatorname{SEP}(\alpha)\)}}
  with open boundary.
\newblock {\em Stochastic Processes Appl.}, 178:39, 2024.
\newblock Id/No 104463.

\bibitem{Bea_FRancheschini_G_HL}
C.~Franceschini, P.~Gon{\c{c}}alves, and B.~Salvador.
\newblock Hydrodynamical behavior for the symmetric simple partial exclusion
  with open boundary.
\newblock {\em Math. Phys. Anal. Geom.}, 26(2):23, 2023.
\newblock Id/No 11.

\bibitem{funaki_tsunoda}
Tadahisa Funaki and Kenkichi Tsunoda.
\newblock Motion by mean curvature from {Glauber}-{Kawasaki} dynamics.
\newblock {\em J. Stat. Phys.}, 177(2):183--208, 2019.

\bibitem{funaki}
Tadahisa Funaki, Patrick van Meurs, Sunder Sethuraman, and Kenkichi Tsunoda.
\newblock Motion by mean curvature from {Glauber}-{Kawasaki} dynamics with
  speed change.
\newblock {\em J. Stat. Phys.}, 190(3):30, 2023.
\newblock Id/No 45.

\bibitem{Glauber}
R.~J. Glauber.
\newblock Time-dependent statistics of the {Ising} model.
\newblock {\em J. Math. Phys.}, 4:294--307, 1963.

\bibitem{Gobron_saada}
Thierry Gobron and Ellen Saada.
\newblock Couplings, attractiveness and hydrodynamics for conservative particle
  systems.
\newblock {\em Ann. Inst. Henri Poincar{\'e}, Probab. Stat.}, 46(4):1132--1177,
  2010.

\bibitem{G_lecture_note_IHP}
Patr{\'{\i}}cia Gon{\c{c}}alves.
\newblock Hydrodynamics for symmetric exclusion in contact with reservoirs.
\newblock In {\em Stochastic dynamics out of equilibrium. Lecture notes from
  the IHP trimester, Institut Henri Poincar\'e (IHP), Paris, France, April --
  July, 2017}, pages 137--205. Cham: Springer, 2019.

\bibitem{arXiv:gu_yang}
Chenlin Gu and Linzhi Yang.
\newblock Relaxation to equilibrium of conservative dynamics {II}: non-gradient
  exclusion processes.
\newblock Preprint, {arXiv}:2509.20797, 2025.

\bibitem{hermon2018cutoffmeanfieldzerorangeprocess}
J.~Hermon and J.~Salez.
\newblock Cutoff for the mean-field zero-range process with bounded monotone
  rates.
\newblock {\em Ann. Probab.}, 48(2):742--759, 2020.

\bibitem{kanai2010twolane}
Masahiro Kanai.
\newblock Two-lane traffic-flow model with an exact steady-state solution.
\newblock {\em Physical Review E}, 82(6):066107, 2010.

\bibitem{KFHS_mean_curvature_ZRP_Glauber}
Perla~El Kettani, Tadahisa Funaki, Danielle Hilhorst, Hyunjoon Park, and Sunder
  Sethuraman.
\newblock Mean curvature interface limit from {Glauber}+{Zero}-range
  interacting particles.
\newblock {\em Commun. Math. Phys.}, 394(3):1173--1223, 2022.

\bibitem{kipnis1999sli}
C.~Kipnis and C.~Landim.
\newblock {\em Scaling limits of interacting particle systems}, volume 320 of
  {\em Grundlehren der mathematischen Wissenschaften}.
\newblock Springer-Verlag, Berlin, 1999.

\bibitem{Lacoin_2016}
H.~Lacoin.
\newblock The cutoff profile for the simple exclusion process on the circle.
\newblock {\em Ann. Probab.}, 44(5):3399--3430, 2016.

\bibitem{Lacoin2013MixingTA}
H.~Lacoin.
\newblock Mixing time and cutoff for the adjacent transposition shuffle and the
  simple exclusion.
\newblock {\em Ann. Probab.}, 44(2):1426--1487, 2016.

\bibitem{lacoin2016simpleexclusionprocesscircle}
H.~Lacoin.
\newblock The simple exclusion process on the circle has a diffusive cutoff
  window.
\newblock {\em Ann. Inst. Henri Poincar\'e{} Probab. Stat.}, 53(3):1402--1437,
  2017.

\bibitem{Lacoin_Leblond}
Hubert Lacoin and R{\'e}mi Leblond.
\newblock Cutoff phenomenon for the simple exclusion process on the complete
  graph.
\newblock {\em ALEA, Lat. Am. J. Probab. Math. Stat.}, 8:285--301, 2011.

\bibitem{LP18}
G{\"u}nter Last and Mathew Penrose.
\newblock {\em Lectures on the {P}oisson {P}rocess}.
\newblock Institute of Mathematical Statistics Textbooks. Cambridge University
  Press, 2018.

\bibitem{Levin_2016}
D.~A. Levin and Y.~Peres.
\newblock Mixing of the exclusion process with small bias.
\newblock {\em J. Stat. Phys.}, 165(6):1036--1050, 2016.

\bibitem{LP}
D.~A. Levin and Y.~Peres.
\newblock {\em Markov chains and mixing times}.
\newblock American Mathematical Society, Providence, RI, 2017.

\bibitem{LLP_glaubet_complete}
David~A. Levin, Malwina~J. Luczak, and Yuval Peres.
\newblock Glauber dynamics for the mean-field {Ising} model: cut-off, critical
  power law, and metastability.
\newblock {\em Probab. Theory Relat. Fields}, 146(1-2):223--265, 2010.

\bibitem{Lubetzky_Sly}
E.~Lubetzky and A.~Sly.
\newblock Cutoff for the {I}sing model on the lattice.
\newblock {\em Invent. Math.}, 191(3):719--755, 2013.

\bibitem{univ_ising_cutoff}
E.~Lubetzky and A.~Sly.
\newblock Universality of cutoff for the {Ising} model.
\newblock {\em Ann. Probab.}, 45(6A):3664--3696, 2017.

\bibitem{LS_Ising_polynomial_time}
Eyal Lubetzky and Allan Sly.
\newblock Critical {Ising} on the square lattice mixes in polynomial time.
\newblock {\em Commun. Math. Phys.}, 313(3):815--836, 2012.

\bibitem{LS_cutoff_general_spin}
Eyal Lubetzky and Allan Sly.
\newblock Cutoff for general spin systems with arbitrary boundary conditions.
\newblock {\em Commun. Pure Appl. Math.}, 67(6):982--1027, 2014.

\bibitem{LS_exposition}
Eyal Lubetzky and Allan Sly.
\newblock An exposition to information percolation for the {Ising} model.
\newblock {\em Ann. Fac. Sci. Toulouse, Math. (6)}, 24(4):745--761, 2015.

\bibitem{LS16}
Eyal Lubetzky and Allan Sly.
\newblock Information percolation and cutoff for the stochastic {Ising} model.
\newblock {\em J. Am. Math. Soc.}, 29(3):729--774, 2016.

\bibitem{Martinelli}
Fabio Martinelli.
\newblock Lectures on {Glauber} dynamics for discrete spin models.
\newblock In {\em Lectures on probability theory and statistics. Ecole d'et\'e
  de Probabilit\'es de Saint-Flour XXVII--1997, Saint-Flour, France, July
  7--23, 1997}, pages 93--191. Berlin: Springer, 1999.

\bibitem{MS19}
M.~Merle and J.~Salez.
\newblock Cutoff for the mean-field zero-range process.
\newblock {\em Annals of Probability}, 47(5):3170--3201, 2019.

\bibitem{miller_peres}
Jason Miller and Yuval Peres.
\newblock Uniformity of the uncovered set of random walk and cutoff for
  lamplighter chains.
\newblock {\em Ann. Probab.}, 40(2):535--577, 2012.

\bibitem{Nam_sly}
Danny Nam and Allan Sly.
\newblock Cutoff for the {Swendsen}-{Wang} dynamics on the lattice.
\newblock {\em Ann. Probab.}, 47(6):3705--3761, 2019.

\bibitem{Propp_wilson96}
James Propp and David Wilson.
\newblock Coupling from the past: {A} user's guide.
\newblock In {\em Microsurveys in discrete probability. DIMACS workshop,
  Princeton, NJ, USA, June 2--6, 1997}, pages 181--192. Providence, RI: AMS,
  American Mathematical Society, 1998.

\bibitem{Rezakhanlou1991HydrodynamicLF}
F.~Rezakhanlou.
\newblock Hydrodynamic limit for attractive particle systems on {${\bf Z}^d$}.
\newblock {\em Comm. Math. Phys.}, 140(3):417--448, 1991.

\bibitem{Rezakhanlou1995MicroscopicSO}
F.~Rezakhanlou.
\newblock Microscopic structure of shocks in one conservation laws.
\newblock {\em Ann. Inst. H. Poincar\'e{} C Anal. Non Lin\'eaire},
  12(2):119--153, 1995.

\bibitem{salez_universality_cutoff}
J.~Salez.
\newblock Universality of cutoff for exclusion with reservoirs.
\newblock {\em Ann. Probab.}, 51(2):478--494, 2023.

\bibitem{schutz-sandow}
Gunter Sch\"utz and Sven Sandow.
\newblock Non-abelian symmetries of stochastic processes: Derivation of
  correlation functions for random-vertex models and
  disordered-interacting-particle systems.
\newblock {\em Phys. Rev. E}, 49:2726--2741, 04 1994.

\bibitem{tanaka_tsunoda}
Ryokichi Tanaka and Kenkichi Tsunoda.
\newblock Glauber-exclusion dynamics: rapid mixing regime.
\newblock {\em Electron. J. Probab.}, 27:26, 2022.
\newblock Id/No 141.

\bibitem{tordeux2010etude}
Antoine Tordeux.
\newblock {\em {\'E}tude de processus en temps continu mod{\'e}lisant
  l'{\'e}coulement de flux de trafic routier}.
\newblock PhD thesis, Universit{\'e} Paris-Est, France, 06 2010.
\newblock Available at \url{https://theses.fr/2010PEST1043}.

\bibitem{tran2022meanfieldzerorangeprocessunbounded}
H.~Q. Tran.
\newblock The mean-field zero-range process with unbounded monotone rates:
  mixing time, cutoff, and {P}oincar\'e{} constant.
\newblock {\em Ann. Appl. Probab.}, 33(3):1732--1757, 2023.

\bibitem{HQ}
H.Q Tran.
\newblock Cutoff for the glauber-exclusion process in the full high-temperature
  regime: an information percolation approach.
\newblock {\em arXiv preprint 2310.15810}, 2023.

\bibitem{Tsegaye}
Eyob Tsegaye.
\newblock Mixing time and cutoff for the {{\(k\)}}-{SPEP}.
\newblock {\em Stochastic Processes Appl.}, 191:19, 2026.
\newblock Id/No 104776.

\bibitem{tsunoda}
Kenkichi Tsunoda.
\newblock Exponentially slow mixing and hitting times of rare events for a
  reaction-diffusion model.
\newblock {\em ALEA, Lat. Am. J. Probab. Math. Stat.}, 19(2):1161--1184, 2022.

\bibitem{Wilson_2004}
D.~B. Wilson.
\newblock Mixing times of {L}ozenge tiling and card shuffling {M}arkov chains.
\newblock {\em Ann. Appl. Probab.}, 14(1):274--325, 2004.

\bibitem{Xu_Zhao}
Lu~Xu and Linjie Zhao.
\newblock Hydrodynamics for asymmetric simple exclusion on a finite segment
  with {Glauber}-type source.
\newblock {\em J. Stat. Phys.}, 191(7):30, 2024.
\newblock Id/No 78.

\end{thebibliography}
\end{document}